

\documentclass[reqno,12pt]{amsart}
   %

\NeedsTeXFormat{LaTeX2e}[1994/12/01]


\usepackage{vmargin}
\setpapersize{A4}


\usepackage{amsmath}



\usepackage{amsfonts}

\usepackage{amssymb}

\usepackage{eufrak}





\usepackage{amscd}

\usepackage{amsthm}

\usepackage{epsfig}

\usepackage{amstext}

\usepackage[all,line,dvips]{xy}
\CompileMatrices 

\newcommand{\id}{\operatorname{id}}

 \newcommand{\Ext}{\operatorname{Ext}}

 \newcommand{\ev}{\operatorname{ev}}




   \theoremstyle{plain}
   \newtheorem{thm}{Theorem}[section]
   \newtheorem{prop}[thm]{Proposition}
   \newtheorem{lemma}[thm]{Lemma}
   \newtheorem{cor}[thm]{Corollary}
   \theoremstyle{definition}
   
   \newtheorem{defn}[thm]{Definition}
   
   \theoremstyle{remark}

\usepackage{enumerate}
\usepackage{graphicx}
\usepackage{enumitem}

   \numberwithin{equation}{section}


\makeatletter
        \date{\today}

\title[Shape theory and extensions of $C^*$-algebras]{Shape theory and
  extensions of $C^*$-algebras}
\author{Vladimir Manuilov and Klaus Thomsen}




\date{}

\email{matkt@imf.au.dk}

  \address{Dept. of Mech. and Math.\\
Moscow State University\\
Moscow, 119991, Russia}
\address{Institut for matematiske fag, Ny Munkegade, 8000 Aarhus C,
  Denmark}

\begin{document}

\maketitle

\begin{abstract}
Let $A$, $A'$ be separable $C^*$-algebras, $B$ a stable
$\sigma$-unital $C^*$-algebra. Our main result is the construction
of the pairing
$[[A',A]]\times\operatorname{Ext}^{-1/2}(A,B)\to\operatorname{Ext}^{-1/2}(A',B)$,
where $[[A',A]]$ denotes the set of homotopy classes of asymptotic
homomorphisms from $A'$ to $A$ and
$\operatorname{Ext}^{-1/2}(A,B)$ is the group of semi-invertible
extensions of $A$ by $B$. Assume that all extensions of $A$ by $B$
are semi-invertible. Then this pairing allows us to give a
condition on $A'$ that provides semi-invertibility of all
extensions of $A'$ by $B$. This holds, in particular, if $A$ and
$A'$ are shape equivalent. A similar condition implies that if
$\operatorname{Ext}^{-1/2}$ coincides with $E$-theory (via the
Connes-Higson map) for $A$ then the same holds for $A'$.

\end{abstract}

\section{Introduction}

The theory of
extensions of $C^*$-algebras is presently experiencing an
unprecedented level of activity aiming to
improve our understanding
of extensions of non-nuclear $C^*$-algebras. One line of
research was sparked by the examples of non-invertible extensions by
the reduced group $C^*$-algebra of a free group obtained by Haagerup
and Thorbj\o rnsen in \cite{HT} and the subsequent applications of their
result by Hadwin and Shen in \cite{HS} which has resulted in a wealth
of examples of $C^*$-algebras with non-invertible extensions by the
compact operators $\mathbb K$. As pointed out in \cite{MT5} each such
example gives rise to a non-invertible extension of the same
$C^*$-algebra by any $C^*$-algebra of the form $B \otimes \mathbb K$
with $B$ unital. Although it may still be a pre-mature to
conclude that the presence of non-invertible extensions is more of a
rule than an exception in the non-nuclear case, the new wealth of
examples has made it more urgent to find a way to handle
non-invertible extensions.

In our previous work we have proposed an approach
towards an analysis of $C^*$-extensions in which many
non-invertible extensions can be handled in a way analogous to how
invertible extensions are dealt with and classified in the theories
developed by Brown, Douglas and Fillmore, \cite{BDF}, and Kasparov, \cite{K1}. Specifically, in a series of papers, beginning with
 \cite{MT3} and culminating in \cite{MT5}, it has been shown that many of the non-invertible
extensions are invertible in a slightly weaker sense, called
\emph{semi-invertibility}. Recall that an extension of a
$C^*$-algebra $A$ by a stable $C^*$-algebra $B$ is invertible when
there is another extension, the inverse, with the property that the
direct sum extension of the two is a split
extension. Semi-invertibility requires only that the sum is
\emph{asymptotically split}, in the sense that there is an asymptotic
homomorphism as defined by Connes and Higson, \cite{CH}, consisting
of right-inverses of the quotient map. What has been shown is that many classes of (non-nuclear) $C^*$-algebras, including suspensions,
certain full and reduced group $C^*$-algebras and certain amalgamated
free products, have the property that all extensions of the algebra by a
stable ($\sigma$-unital) $C^*$-algebra are semi-invertible. For some
of these algebras it is known, thanks to the development mentioned
above, that there exist non-invertible extensions, but for many or
most it is simply not known if all extensions are invertible or
not. Intriguingly it has also been shown, in \cite{MT4}, that
non-semi-invertible extensions exist.

The main reason why semi-invertibility is easier to establish, and a
good reason why it can appear to be more natural in the homology and
co-homology theories that are based on extensions of $C^*$-algebras is
that it is homotopy invariant, in the sense that if the Busby
invariant of two $C^*$-extensions are homotopic as $*$-homomorphisms
then one of the extensions is semi-invertible if and only if the other
is. This is in glaring contrast to invertibility; it is e.g. known
that there are contractible $C^*$-algebras with non-invertible
extensions by $\mathbb K$, cf. \cite{Ki}. All but one of the methods
used so far to establish automatic semi-invertibility use the homotopy
invariance property; the
exception being Theorem 3.3 of \cite{MT5}. However, it is shown in
\cite{MT4} that there are $C^*$-extensions which are not even
invertible up to the more natural and much weaker notion of homotopy
usually applied in connection with $C^*$-extensions, and it becomes
therefore a natural problem to identify the borderline between the
$C^*$-algebras for which semi-invertibility of extensions is
automatic, and the rather mysterious algebras with non-semi-invertible
extensions. Presently such an identification seems out of reach,
although one may be slightly more optimistic about the possibility of
finding the right separating conditions than for doing the analogous
thing concerning invertibility.

The main purpose with the present
paper is to show that automatic semi-invertibility of extensions, as a
property of $C^*$-algebras, is
not only invariant under homotopy equivalence, but also
under shape equivalence. This allows us to identify a large natural
class of $C^*$-algebras which have this property, namely the class of $C^*$-algebras
whose shape is dominated by a nuclear $C^*$-algebra. To make this
more precise recall that shape theory of $C^*$-algebras was introduced
by Effros and Kaminker in \cite{EK} as a generalisation of shape
theory for topological spaces. It was developed further by Blackadar
in \cite{B} before it was tied together with the E-theory of Connes
and Higson by Dadarlat in \cite{D}. Roughly speaking what Dadarlat
showed was that shape theory of $C^*$-algebras can be described by the
homotopy category of asymptotic homomorphisms which suitably suspended
becomes the E-theory of Connes and Higson, \cite{CH}. In short, shape
theory is unsuspended E-theory. It is in this guise that we use shape
theory here. As shown by Dadarlat a morphism in the shape category, say from the
$C^*$-algebra $A$ to the $C^*$-algebra $B$, is given by an element in
$[[A,B]]$, the homotopy classes of asymptotic homomorphism from $A$ to
$B$. Our main result says that if $A$ has the property that there is another
$C^*$-algebra $A'$ and asymptotic homomorphisms $\psi : A \to
A \otimes \mathbb K$, $\lambda : A \to A'\otimes \mathbb K$ and $\mu : A'\otimes
\mathbb K \to A\otimes \mathbb K$ such that
 $$
\left[\id_A\right] + [\psi] = [\mu] \bullet [\lambda]
$$
in $[[A,A\otimes \mathbb K]]$, where $\id_A$ is the identity map on
$A$, considered as a map $A \to A \otimes \mathbb K$ in the standard
way and $\bullet$ denotes the composition product of Connes and
Higson, then all extensions of $A$ by a stable $\sigma$-unital
$C^*$-algebra $B$ are semi-invertible if all extensions of $A'$ by $B$
are. When $\psi$ can be taken to be zero the assumption means
that $A$ is shape dominated by $A'$ in a sense generalising
the notion of homotopy domination introduced by Voiculescu, \cite{V},
and when $A'$ can be taken
to be zero the assumption is that $A$ is homotopy symmetric in the sense
defined by Dadarlat and Loring in \cite{DL}.

We consider also the relation between the group of semi-invertible
extensions and E-theory
proper. As we showed in \cite{MT4} the Connes-Higson construction introduced in \cite{CH} does
not give an isomorphism between E-theory and the homotopy classes of
extensions in general, but we show that it does for $C^*$-algebras
that are homotopy symmetric or shape dominated by a nuclear
$C^*$-algebra, or any other $C^*$-algebra for which it does.

\emph{Acknowledgement.} The main part of this work was done during a stay of
          both authors at the Mathematische Forchungsinstitut in
          Oberwolfach in January 2010 in the framework of the `Research in Pairs' programme. We want to thank the MFO for the perfect
          working conditions.

\section{Pairing extensions with asymptotic homomorphisms}

\subsection{Asymptotic homomorphisms}\label{asymp} Let $A$ and $B$ be
$C^*$-algebras, $A$ separable. As in \cite{CH} we define an \emph{asymptotic
  homomorphism} $\alpha : A \to B$ to be a path of maps $\alpha_t : A
\to B$, $t \in [1,\infty)$, such that
\begin{itemize}
\item $ t \mapsto \alpha_t(a)$ is continuous,
\item $\lim_{t \to \infty} \alpha_t(a + \lambda b) - \alpha_t(a)
  - \lambda \alpha_t(b) = 0$,
\item$\lim_{t \to \infty} \alpha_t(ab) - \alpha_t(a)\alpha_t(b)
  = 0$, and
\item $\lim_{t \to \infty} \alpha_t(a^*) - \alpha_t(a)^* = 0$
\end{itemize}
for all $a,b \in A$ and all $\lambda \in \mathbb C$. It follows from
these conditions that $\limsup_t \left\|\alpha_t(a)\right\| \leq
\left\|a\right\|$, and hence in particular that $\sup_{t \in
  [1,\infty)} \left\| \alpha_t(a)\right\| < \infty$ for all $a \in
A$.

We say that an asymptotic homomorphism $\alpha : A \to B$ is
\emph{equi-continuous} when $\alpha_t, t \in [1,\infty)$, is an
equi-continuous family of maps. By a standard argument any asymptotic
homomorphism $\alpha$ is asymptotic to an equi-continuous asymptotic
homomorphism $\alpha'$, i.e. $\alpha'$ is equi-continuous and $\lim_{t
  \to \infty} \alpha_t(a) - \alpha'_t(a) = 0$ for all $a \in A$. Hence
we may assume, as we shall, that all asymptotic homomorphisms under
consideration are equi-continuous. It
will also be convenient for us, if only as a tool, to deal with asymptotic homomorphisms $\alpha$
which are both equi-continuous and \emph{uniformly continuous} in the
sense that $t  \mapsto \alpha_t(a)$ is uniformly continuous for all $a
\in A$. We shall need the following lemma in order to fully exploit
this additional property.

\begin{lemma}\label{uniformlift} Let $D$ be a $C^*$-algebra containing
  a $\sigma$-unital ideal $D_0$, and let $q: D \to D/D_0$ be the
  quotient map. Let $\varphi =
  \left(\varphi_t\right)_{t \in [1,\infty)} : A \to D/D_0$ be a
  uniformly continuous asymptotic homomorphism. There
  is then a family $\overline{\varphi}_t : A \to D, t \in
  [1,\infty)$, of maps such that
\begin{enumerate}
\item[i)] $q \circ \overline{\varphi}_t = \varphi_t$ for all $t \in
  [1,\infty)$,
\item[ii)] $\overline{\varphi}_t : A \to D$, $t \in [1,\infty)$, is
  equi-continuous,
\item[iii)] $t \mapsto \overline{\varphi}_t(a)$ is uniformly continuous
  for all $a \in A$, and
\item[iv)] $\sup_{t \in [1,\infty)}
  \left\|\overline{\varphi_t}(a)\right\| < \infty$ for all $a \in A$.
\end{enumerate}

\begin{proof} Let $\psi = \left(\psi_t\right)_{t \in
    [1,\infty)} : A \to D$ be an equi-continuous lift of
  $\varphi$ such that $\sup_{t \in [1,\infty)}
  \left\|\psi_t(a)\right\| < \infty$ for all $a \in A$. $\psi$ exists by Lemma 2.1 of \cite{MT2}. Let $F_1
  \subseteq F_2 \subseteq F_3 \subseteq \dots$ be a sequence of finite
  sets with dense union in $A$. Let $u_1 \leq u_2 \leq u_3 \leq \dots$ be an approximate
  unit in $D_0$ such that
$$
\left\|(1-u_n)\left(\psi_t(a) - \psi_{t'}(a)\right)\right\| \leq
\left\|\varphi_t(a) - \varphi_{t'}(a)\right\| + \frac{1}{n}
$$
for all $a \in F_n$ and all $t,t' \in [1,n+1]$. Such an approximate unit exists
because $D_0$ is $\sigma$-unital. For $t \in [n,n+1]$, set
$v_t = (t-n)u_{n+1} + (n+1-t)u_n$, and define $\overline{\varphi}_t :
A \to M(D)$ such that
$$
\overline{\varphi}_t(a) = (1-v_t)\psi_t(a) .
$$
It is obvious that $\left(\overline{\varphi}_t\right)_{t \in
  [1,\infty)}$ is equi-continuous since $\left(\psi_t\right)_{t \in
  [1,\infty)}$ is and that i) and iv) hold. To check that $\overline{\varphi}$ is uniformly
continuous, let $a \in A$ and $\epsilon > 0$ be given. By
equi-continuity there is a $b \in F_k$ such that $\frac{1}{k} \leq
\epsilon$ and $\left\|\overline{\varphi}_t(a) -
  \overline{\varphi}_t(b)\right\| \leq \epsilon$ and $\left\|{\psi}_t(a) -
  {\psi}_t(b)\right\| \leq \epsilon$ for all $t \in
[1,\infty)$. Let $t \geq k$. If $|t' -t| \leq 1$ we find that
\begin{equation*}\label{est1}
\begin{split}
&\left\|\overline{\varphi}_t(a) - \overline{\varphi}_{t'}(a)\right\|
\leq \left\|\overline{\varphi}_t(b) - \overline{\varphi}_{t'}(b)\right\|
+ 2 \epsilon \\
&\leq \left\|(1-v_t)\left({\psi}_t(b) - \psi_{t'}(b)\right)\right\| +
\left\|\psi_{t'}(b)(v_{t'} - v_t)\right\|  + 2 \epsilon\\
& \leq \left\| \varphi_t(b) - \varphi_{t'}(b)\right\| + \frac{1}{k} +
|t-t'| \sup_{s \in [1,\infty)} \left\|\psi_s(b)\right\| + 2\epsilon \\
& \leq \left\| \varphi_t(a) - \varphi_{t'}(a)\right\| + 5\epsilon +
|t-t'| \sup_{s \in [1,\infty)} \left\|\psi_s(a)\right\| + |t-t'|
\epsilon .
\end{split}
\end{equation*}
By uniform continuity of $t \mapsto \varphi_t(a)$ this shows there is a $\delta >
0$ such that
$$
\left\|\overline{\varphi}_t(a) -
  \overline{\varphi}_{t'}(a)\right\| \leq 6\epsilon + \epsilon \sup_{s
  \in [1,\infty)} \left\|\psi_s(a)\right\|
$$
when $t \geq k$ and
$\left|t-t'\right| \leq \delta$. Since $[1,k]$ is compact, we see that
$t \mapsto \overline{\varphi}_t(a)$ is uniformly continuous on
$[1,\infty)$.
\end{proof}
\end{lemma}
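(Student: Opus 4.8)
The plan is to start from an ordinary lift of $\varphi$ and then damp it with an approximate unit of $D_0$ chosen so carefully that the defect in $t$-continuity — which lives modulo $D_0$ anyway — gets absorbed. First I would invoke the existence of an equi-continuous, uniformly bounded lift $\psi = (\psi_t)_{t \in [1,\infty)} : A \to D$ of $\varphi$; such a $\psi$ is provided by the lifting lemma quoted from \cite{MT2}. The only property $\psi$ may fail to have is uniform continuity in $t$, and the key observation is that $q \circ \psi_t = \varphi_t$ \emph{is} uniformly continuous, so any failure of $t$-continuity of $\psi$ is a $D_0$-valued phenomenon that can be cut down by multiplying by $1 - v_t$ for a suitable element $v_t \in D_0$.

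The heart of the construction is the choice of that approximate unit. Fixing an increasing sequence of finite sets $F_1 \subseteq F_2 \subseteq \dots$ with dense union in $A$, I would produce an increasing approximate unit $u_1 \le u_2 \le \dots$ of $D_0$ for which
\[
\left\|(1-u_n)\bigl(\psi_t(a) - \psi_{t'}(a)\bigr)\right\| \le \left\|\varphi_t(a) - \varphi_{t'}(a)\right\| + \tfrac{1}{n}
\]
holds for all $a \in F_n$ and all $t,t' \in [1,n+1]$. This rests on two facts: that $\inf_u \|(1-u)d\| = \|q(d)\|$ as $u$ runs through an approximate unit of $D_0$, and that the set $\{\psi_t(a) - \psi_{t'}(a) : a \in F_n,\ t,t' \in [1,n+1]\}$ is compact, being a finite union of continuous images of the compact square $[1,n+1]^2$ (continuity of $t \mapsto \psi_t(a)$ is built into the definition of an asymptotic homomorphism). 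On a compact set the convergence $\|(1-u)d\| \to \|q(d)\|$ along the increasing approximate unit is uniform, so a single $u_n$ serves the whole family. I expect this to be the main obstacle, since it is the only place where both the $\sigma$-unitality of $D_0$ and the availability of norm-continuous lifts are genuinely exploited.

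With the $u_n$ in hand I would interpolate linearly, setting $v_t = (t-n)u_{n+1} + (n+1-t)u_n$ for $t \in [n,n+1]$, so that $t \mapsto v_t$ is norm-continuous and $1$-Lipschitz, and define $\overline{\varphi}_t(a) = (1-v_t)\psi_t(a)$. Properties i), ii) and iv) are then immediate: since $v_t \in D_0$ we get $q\bigl(\overline{\varphi}_t(a)\bigr) = \varphi_t(a)$, while equi-continuity and the uniform bound descend from $\psi$ because $\|1-v_t\| \le 1$.

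For iii), the uniform continuity in $t$, I would estimate $\bigl\|\overline{\varphi}_t(a) - \overline{\varphi}_{t'}(a)\bigr\|$ for $|t-t'| \le 1$ by first replacing $a$ by a nearby $b \in F_k$ using equi-continuity, and then splitting the difference $(1-v_t)\psi_t(b) - (1-v_{t'})\psi_{t'}(b)$ into the term $(1-v_t)\bigl(\psi_t(b) - \psi_{t'}(b)\bigr)$, which is controlled by the approximate-unit estimate above, and the term $\psi_{t'}(b)(v_{t'}-v_t)$, which is controlled by $|t-t'|\sup_{s}\|\psi_s(b)\|$ via the Lipschitz bound on $v$. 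Uniform continuity of $\varphi$ then makes the first contribution small once $|t-t'| \le \delta$, and shrinking $|t-t'|$ handles the second; this gives uniform continuity on $[k,\infty)$, and compactness of $[1,k]$ upgrades it to all of $[1,\infty)$.
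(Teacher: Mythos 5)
Your proposal is correct and follows essentially the same route as the paper's own proof: the same lift $\psi$ from Lemma 2.1 of \cite{MT2}, the same approximate-unit condition $\left\|(1-u_n)\left(\psi_t(a)-\psi_{t'}(a)\right)\right\| \leq \left\|\varphi_t(a)-\varphi_{t'}(a)\right\| + \frac{1}{n}$ on $F_n$ and $[1,n+1]$, the same linear interpolation $v_t$, the same definition $\overline{\varphi}_t(a) = (1-v_t)\psi_t(a)$, and the same three-term estimate for uniform continuity. The only difference is that you spell out, via compactness and the fact that $\lim_u\|(1-u)d\| = \|q(d)\|$ uniformly on compact sets, why such an approximate unit exists — a point the paper dispatches with the phrase ``because $D_0$ is $\sigma$-unital.''
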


\subsection{Folding}\label{folding} Let $A$ and $B$ be $C^*$-algebras, $A$ separable,
$B$ $\sigma$-unital. Let $M(B)$ be
the multiplier algebra of $B$ and $q_B : M(B) \to Q(B)$ the quotient
map onto the generalised Calkin algebra $Q(B) = M(B)/B$. As in
\cite{MT2} an asymptotic homomorphism $\varphi : A \to Q(B)$ will be
called an \emph{asymptotic extension}. In \cite{MT2} we used a
construction called \emph{folding} which produces a genuine extension
out of an asymptotic one. To introduce it here, let $\varphi : A \to Q(B)$ be a equi-continuous asymptotic extension. A \emph{lift}
of $\varphi$ is an equi-continuous family of maps $\overline{\varphi}_t : A \to M(B)$,
$t \in [1,\infty)$, such that $\sup_t
\left\|\overline{\varphi}_t(a)\right\| < \infty$ for all $a \in A$ and
$q_B \circ \overline{\varphi}_t =
\varphi_t$ for all $t$. The existence of such a lift follows from Lemma 2.1 of \cite{MT2}.


Let $b$ be a strictly positive
element in $B$, $0 \leq b \leq 1$, which exists because we assume that
$B$ is $\sigma$-unital. As in \cite{MT2} a \emph{unit sequence} is a sequence
$u_0 \leq u_1 \leq u_2 \leq \dots$
of elements in $B$ such that
\begin{itemize}
\item  $u_n = f_n(b)$ for some $f_n \in C[0,1], 0 \leq f_n \leq 1$,
  which is zero in a neighbourhood of $0$,
\item $u_{n+1}u_n = u_n$ for all $n$, and
\item  $\lim_{n \to \infty} u_n b = b$.
\end{itemize}
The existence of a unit sequence, with some important additional properties that we
shall need is a consequence of the following well-known lemma.

\begin{lemma}\label{deltaepsilon} Let $K \subseteq B$ and $L \subseteq
M(B)$ be compact in the norm topology, and let $\delta > 0$ and
$\epsilon > 0$ be arbitrary. It follows that there is a continuous
function $f : [0,1] \to [0,1]$ such that
\begin{enumerate}
\item[i)] $f$ is zero in an open neighbourhood of $0$,
\item[ii)] $f(t) = 1, \ t \geq \delta$,
\end{enumerate}
and $u = f(b) \in B$ has the property that
\begin{enumerate}
\item[iii)] $\left\|um-mu\right\| \leq \epsilon \ \forall m \in L$ and
\item[iv)] $\left\|uk-k\right\| \leq \epsilon \ \forall k \in K$.
\end{enumerate}
\begin{proof} See for example Lemma 7.3.1 in \cite{BO}.
\end{proof}
\end{lemma}

Given a unit sequence $\{u_n\}$ we set $\Delta_0 = \sqrt{u_0}$ and
$\Delta_n = \sqrt{u_n - u_{n-1}}, n \geq 1$. Then
\begin{enumerate}
\item[a1)]\label{enu8} $\Delta_i \Delta_j = 0$ when $|i -j| \geq 2$, and
\item[a2)]\label{enu9} $\sum_{j=0}^{\infty} \Delta_j^2 = 1$, with convergence in
  the strict topology.
\end{enumerate}
In particular, it follows that
\begin{enumerate}[resume]
\item[a3)]\label{enu10}  $\sum_{j=0}^{\infty} \Delta_i \Delta_j^2 =
  \sum_{l=i-1}^{i+1} \Delta_i\Delta_l^2 = \Delta_i$
\end{enumerate}
for all $i$, including $i = 0$ when we set $\Delta_{-1} = 0$.

A \emph{discretization} (of $[1,\infty)$) is an increasing sequence $t_0
\leq t_1 < t_2 < \dots $ in
 $[1,\infty)$ such that
\begin{enumerate}
\item[a4)]\label{enu14} $\lim_{n \to \infty} t_n = \infty$,
\item[a5)]\label{enu15} $\lim_{n \to \infty} t_{n+1} - t_n = 0$,
and
\item[a6)]\label{enu16} $t_n \leq n$ for all $n \geq 1$.
\end{enumerate}

When $\varphi : A \to Q(B)$ is an asymptotic extension and
$\overline{\varphi} : A \to M(B)$ is a lift of $\varphi$ a pair
$\left(\{u_n\}, \{t_n\}\right)$, where $\{u_n\}$ is a unit
sequence and $\{t_n\}$ a discretization, is said to be \emph{compatible} with
$\overline{\varphi}$ when
\begin{equation}\label{compatible1}
\lim_{n \to \infty} \sup_{t \in [1,n+2]} \left\|u_n\overline{\varphi}_t(a) -
  \overline{\varphi}_t(a)u_n\right\| ,\end{equation}
and
\begin{equation}\label{uniccomp}
\lim_{t \to \infty} \sup_{t \in \left[t_n,t_{n+1}\right]}
\left\|\overline{\varphi}_t(a) - \overline{\varphi}_{t_n}(a)\right\| = 0
\end{equation}
for all $a \in A$, and
\begin{equation}\label{compatible2}
\lim_{n \to \infty} \sup_{t \in [1,n+2]}
\left[\left\|(1-u_n)f(t)\right\| - \left\|q_B\left(f(t)\right)
  \right\|\right] = 0
\end{equation}
for all $f \in C_b\left(\left[1,\infty\right),M(B)\right)$ of the form
\begin{itemize}
\item $f(t) =\overline{\varphi}_t(a)\overline{\varphi}_t(b) -
      \overline{\varphi}_t(ab)$,
\item $f(t) = \overline{\varphi}_t(a) + \lambda\overline{\varphi}_t(b) -
      \overline{\varphi}_t(a +\lambda b)$, and
\item $f(t) = \overline{\varphi}_t(a^*) -
    \overline{\varphi}_t(a)^*$
 \end{itemize}
for any elements $a,b \in A$, $\lambda\in\mathbb C$. The existence
of compatible pairs $\left(\{u_n\}, \{t_n\}\right)$ was
established in \cite{MT1} and \cite{MT2}. Note that condition
(\ref{uniccomp}) is automatically fulfilled when
$\overline{\varphi}$ is uniformly continuous; it follows then from a5).

Assume that $\left(\{u_n\}, \{t_n\}\right)$ is a pair compatible with
$\overline{\varphi}$. The combined triple
$ f = \left(\overline{\varphi}, \{u_n\}, \{t_n\}\right)$ will be
called \emph{folding data} for the asymptotic extension
$\varphi$. We can then define $\overline{\varphi}_f: A \to M(B)$ such
that
$$
\overline{\varphi}_f(a) = \sum_{j=0}^{\infty} \Delta_j
\overline{\varphi}_{t_j}(a)\Delta_j ,
$$
cf. Lemma 3.1 of \cite{MT2}. By Lemmma 3.5 in \cite{MT2},
$$
\varphi_f = q_B \circ  \overline{\varphi}_f
$$
is an extension $\varphi_f: A \to Q(B)$ which we call a
\emph{folding} of $\varphi$.

A \emph{re-parametrisation} is a non-decreasing continuous function
$r : [1,\infty) \to [1,\infty)$ such that $\lim_{t \to \infty} r(t) =
\infty$. If there is a constant $K$ such that $\left|r(s) -
  r(t)\right| \leq K|s-t|$ for all $s,t\in [1,\infty)$ we say that $r$
is \emph{Lipschitz}. Note that when $\varphi : A \to Q(B)$ is an
asymptotic extension and $\overline{\varphi} : A \to M(B)$ is a
lift of $\varphi$, we can define a new asymptotic extension
$\varphi^r$ with a lift $\overline{\varphi^r}$ such that
$\varphi^r_t = \varphi_{r(t)}$ and $\overline{\varphi^r}_t =
\overline{\varphi}_{r(t)}$. Both $\varphi^r$ and
$\overline{\varphi^r}$ remain uniformly continuous if
$\overline{\varphi}$ is uniformly continuous and $r$ is
Lipschitz.

\begin{lemma}\label{uniform7} Let $\varphi : A \to Q(B)$ be an
  asymptotic extension and let $f= \left(\overline{\varphi},
    \{u_n\}, \{t_n\}\right)$ be folding data for
    $\varphi$. There is then a Lipschitz re-parametrisation
    $r$  and a
    discretization $\left\{s_n\right\}$ such that
\begin{enumerate}
\item[i)] $\varphi^r$ and $\overline{\varphi^r}$ are both uniformly
  continuous, and
\item[ii)] $r(s_n) = t_n$ for all $n$.
\end{enumerate}
Furthermore, $f' = \left(\overline{\varphi^r}, \left\{u_n\right\},
  \left\{s_n\right\}\right)$ is folding data for $\varphi^r$ and
$\varphi^r_{f'} = \varphi_f$.
\begin{proof} Let $F_1 \subseteq F_2  \subseteq F_3 \subseteq \dots $
  be an increasing sequence of finite sets with dense union in
  $A$. It follows from (\ref{uniccomp}) that there is an increasing
  sequence $1 < m_1 < m_2 < m_3 < \dots$ in $\mathbb N$ such that
\begin{equation}\label{uui}
\sup_{t \in \left[t_n ,t_{n+k}\right]} \left\| \overline{\varphi}_t(a) - \overline{\varphi}_{t_n}(a) \right\|
\leq \frac{1}{k}
\end{equation}
for all $a \in F_k$ and all $n \geq m_k$. By increasing the $m_k$'s we
can arrange that $m_{k+1} - m_k = kl_k$ for some $l_k \in \mathbb
N$. Set $l_0 = m_1$. Thanks to a5) we can
also arrange that
\begin{equation}\label{tlip}
t_{n+k} - t_n \leq \frac{1}{k}
\end{equation}
for all $n \geq m_k$. Set
\begin{equation}\label{sformel}
s_{{m_k} + i} = \sum_{j=0}^{k-1} l_j + \frac{i}{k}
\end{equation}
for all $i \in
\left\{0,1,2,\dots, kl_k\right\}$ and all $k = 1,2,3, \dots$. Then
$s_n \leq n$ for all $n \geq m_1$. For $j
\in \{0,1,\dots, m_1-1\}$ we choose
$s_j \in \left[1,m_1\right]$
such
that $s_j$ increases with $j$ and $s_j \leq j$ for all $j\in \left\{
  1,2, \dots ,m_1-1\right\}$. Then
$\{s_n\}$ is a discretization. Define
$r : [1,\infty) \to [1,\infty)$ such that ii) holds and $r$ is linear
on $\left[s_n,s_{n+1}\right]$ for all $n$. Then $r$ is a re-parametrisation and
$$
r\left(\sum_{j=0}^{k-1} l_j + i\right) = t_{m_{k} + ik},
$$
for all $i \in \{0,1, \dots, l_k\}$ and all $k$. It follows then from (\ref{uui}), by use of the
equi-continuity of $\overline{\varphi}$, that
\begin{equation}\label{uuii}
\lim_{t \to \infty} \sup_{ v \in [0,1]} \left\| \overline{\varphi}_{r(t+v)}(a) -
  \overline{\varphi}_{r(t)}(a)\right\| = 0
\end{equation}
for all $a \in A$. (\ref{uuii}) implies that
$\overline{\varphi^r}$, and hence also $\varphi^r$ are uniformly
continuous, i.e. i) also holds. It follows from
  (\ref{tlip}) and (\ref{sformel}) that
$$
\frac{r\left(s_{m_k + i+1}\right) - r\left(s_{m_k +i}\right)}{s_{m_k+i+1} - s_{m_k+i}} \leq 1
$$
when $i \in \left\{0,1,2,\dots, kl_k -1\right\}$. It follows that
there is a $K > 0$ such that $r\left(s_{j+1}\right) -
r\left(s_j\right) \leq K\left(s_{j+1} - s_j\right)$
for all $j \geq 0$, proving
that $r$ is Lipschitz. Finally, it is now straightforward to check
that $f' = \left(\overline{\varphi^r}, \left\{u_n\right\},
  \left\{s_n\right\}\right)$ is folding data for $\varphi^r$ and that
$\varphi^r_{f'} = \varphi_f$.
\end{proof}
\end{lemma}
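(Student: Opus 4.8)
The plan is to slow down the time parameter exactly where $\overline{\varphi}$ oscillates, using the only control available on that oscillation, namely condition (\ref{uniccomp}). First I would fix an increasing sequence $F_1 \subseteq F_2 \subseteq \cdots$ of finite subsets with dense union in $A$ and upgrade (\ref{uniccomp}), by a telescoping estimate over consecutive discretization intervals, to control over blocks of $k$ intervals: there is an increasing sequence $m_1 < m_2 < \cdots$ in $\mathbb{N}$ with
\[
\sup_{t \in [t_n, t_{n+k}]} \left\|\overline{\varphi}_t(a) - \overline{\varphi}_{t_n}(a)\right\| \leq \frac{1}{k}
\]
for all $a \in F_k$ and all $n \geq m_k$. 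After enlarging the $m_k$ I may assume $m_{k+1} - m_k = k l_k$ for integers $l_k$, and, invoking a5), that $t_{n+k} - t_n \leq \frac{1}{k}$ whenever $n \geq m_k$.

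Next I would build the new discretization by inserting $k l_k$ equally spaced points, at spacing $\frac{1}{k}$, into the $k$-th block, and let $r : [1,\infty) \to [1,\infty)$ be the continuous, non-decreasing, piecewise-linear map pinned by requirement ii), $r(s_n) = t_n$. The point of this spacing is that one unit of \emph{new} time spent in the $k$-th block is traversed by exactly $k$ \emph{old} discretization steps, so the block estimate above bounds the oscillation of $t \mapsto \overline{\varphi}_{r(t)}(a)$ over any unit new-time interval by $\frac{1}{k} \to 0$; this delivers i), the uniform continuity of $\overline{\varphi^r}$, and hence of $\varphi^r$. At the same time the slope of $r$ across each small interval equals $k\left(t_{n+1} - t_n\right) \leq 1$ by the estimate $t_{n+k} - t_n \leq \frac{1}{k}$, so the slope of $r$ is at most $1$ outside an initial compact range; absorbing the finitely many initial slopes into a single constant $K$ yields the Lipschitz bound.

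The genuine difficulty is that the two demands on $r$ pull in opposite directions — it must crawl slowly enough to iron out the oscillations of $\overline{\varphi}$, yet advance fast enough to remain Lipschitz — and the whole construction hinges on matching the insertion rate $\frac{1}{k}$ to the $k$-step modulus extracted from (\ref{uniccomp}). Once $r$ and $\{s_n\}$ are in hand the remaining assertions are light. Since the folding operation samples the lift only at the discretization points, property ii) gives
\[
\overline{\varphi^r}_{f'}(a) = \sum_{j=0}^{\infty} \Delta_j \overline{\varphi}_{r(s_j)}(a)\Delta_j = \sum_{j=0}^{\infty} \Delta_j \overline{\varphi}_{t_j}(a)\Delta_j = \overline{\varphi}_f(a),
\]
so $\varphi^r_{f'} = \varphi_f$ immediately. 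That $\left(\{u_n\}, \{s_n\}\right)$ is compatible with $\overline{\varphi^r}$ is then routine: condition (\ref{uniccomp}) for the new data follows automatically from the uniform continuity in i) together with a5), while (\ref{compatible1}) and (\ref{compatible2}) transfer from the original folding data once one notes that the slope bound forces $r(t) \leq t$ for all large $t$ — indeed the slopes are $\leq 1$ on $[m_1,\infty)$ and $r(m_1) = t_{m_1} \leq m_1$ — so that $r\left([1,n+2]\right) \subseteq [1,n+2]$ for large $n$ and each supremum over new time is dominated by the corresponding supremum for $\overline{\varphi}$.
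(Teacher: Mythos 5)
Your proposal is correct and follows essentially the same route as the paper: the same block estimate $\sup_{t \in [t_n,t_{n+k}]}\|\overline{\varphi}_t(a)-\overline{\varphi}_{t_n}(a)\|\leq \frac{1}{k}$ for $n \geq m_k$, the same insertion of $kl_k$ points at spacing $\frac{1}{k}$ to define $\{s_n\}$ and the piecewise-linear $r$, the same slope computation $k(t_{n+1}-t_n)\leq 1$ for the Lipschitz bound, and the same matching of the $\frac{1}{k}$ insertion rate to the $k$-step modulus for uniform continuity. Your closing verification (in particular the observation that $r(t)\leq t$ eventually, so the compatibility conditions transfer) merely fills in what the paper dismisses as straightforward, and it is accurate.
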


There is an alternative picture of the folding operation which we
shall need. Let $l^2(B)$ denote the standard Hilbert $B$-module of
'square-summable' sequences $\left(b_0,b_1,b_2,\dots \right)$ of
elements from $B$. The $C^*$-algebra of adjoint-able operators on
$l^2(B)$ can be identified with $M(B \otimes \mathbb K)$ and the 'compact'
operators on $l^2(B)$ is then identified with $B\otimes \mathbb
K$, cf. \cite{K2}. By using the standard matrix units
$\left\{e_{ij}\right\}_{i,j=0}^{\infty}$ which act on $l^2(B)$ in
the obvious way, we can use a set of folding data $ f =
\left(\overline{\varphi}, \{u_n\}, \{t_n\}\right)$ to define a map
$\overline{\varphi}^f : A \to M\left(B \otimes \mathbb K\right)$
such that
$$
\overline{\varphi}^f(a) = \sum_{i=0}^{\infty} \sum_{j= i-1}^{i+1}
\Delta_i \overline{\varphi}_{t_i}(a) \Delta_j \otimes e_{ij} .
$$
The sum converges in the strict topology because $\sup_{i,j}
\left\|\Delta_i \overline{\varphi}_{t_i}(a)\Delta_j\right\| <
\infty$. $\overline{\varphi}^f$ is continuous by equi-continuity of
$\overline{\varphi}_t, t \in [1, \infty)$, and a direct check, as in
the proof of Lemma 3.5 of \cite{MT2}, shows that
$\overline{\varphi}^f$ is a $*$-homomorphism modulo $B \otimes \mathbb
K$, i.e. $\varphi^f = q_{B \otimes \mathbb K} \circ
\overline{\varphi}^f$ is an extension of $A$ by $B \otimes \mathbb
K$. In order to see the relation between $\varphi^f$ and
$\varphi_f$, observe that
$$
V = \sum_{j=0}^{\infty} \Delta_j \otimes e_{j0}
$$
 is a partial isometry in $M(B \otimes \mathbb K)$ such that
\begin{itemize}
\item
$V \left(\overline{\varphi}_f(a) \otimes e_{00}\right) V^* - \overline{\varphi}^f(a)
\in B \otimes \mathbb K$, and
\item $V^*V = 1 \otimes e_{00}$.
\end{itemize}
Since $(1 -VV^*)(l^2(B)) \oplus l^2(B)$ and $\left(1 - 1 \otimes
  e_{00}\right)(l^2(B)) \oplus l^2(B)$ are isomorphic Hilbert
$B$-modules by Kasparov's stabilisation theorem, cf. \cite{K2}, it
follows that there is a
unitary dilation $U$ of $V$, acting on $l^2(B) \oplus l^2(B)$,
such that
$$
U \left(  \begin{matrix} \overline{\varphi}_f(a) \otimes e_{00} & 0 \\
    0 & 0 \end{matrix} \right)U^* - \left(  \begin{matrix} \overline{\varphi}^f(a) & 0 \\
    0 & 0 \end{matrix} \right) \in M_2\left(B \otimes \mathbb K\right)
.
$$

In this way we obtain the following conclusion.

\begin{lemma}\label{alt} Assume that $B$ is stable, and identify $B
  \otimes \mathbb K$ with $B$.

Then $\varphi_f \oplus 0$ is unitarily equivalent to $\varphi^f \oplus
0$.
\end{lemma}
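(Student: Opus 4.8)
The plan is to read the conclusion off from the computation carried out in the paragraphs immediately preceding the statement, where all of the analytic work has in fact been done; what remains is only to organise the identifications so that the containment produced there becomes a genuine unitary equivalence of Busby invariants.

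First I would record the data already at hand: the partial isometry $V = \sum_{j=0}^{\infty}\Delta_j \otimes e_{j0} \in M(B\otimes\mathbb K)$ with $V^*V = 1\otimes e_{00}$ and $V\left(\overline{\varphi}_f(a)\otimes e_{00}\right)V^* - \overline{\varphi}^f(a) \in B\otimes\mathbb K$ for all $a$, together with the unitary dilation $U$ acting on $l^2(B)\oplus l^2(B)$, that is $U \in M\left(M_2(B\otimes\mathbb K)\right)$, for which
\[
U\begin{pmatrix}\overline{\varphi}_f(a)\otimes e_{00} & 0\\ 0 & 0\end{pmatrix}U^* - \begin{pmatrix}\overline{\varphi}^f(a) & 0\\ 0 & 0\end{pmatrix} \in M_2\left(B\otimes\mathbb K\right)
\]
for every $a \in A$. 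The passage from $V$ to $U$, and hence the appearance of the extra zero summand, is exactly the standard device for upgrading a partial-isometry equivalence of corners to an equivalence implemented by an honest unitary.

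Next I would identify the two block-diagonal families as lifts of the Busby invariants in play. The family $a \mapsto \left(\overline{\varphi}_f(a)\otimes e_{00}\right)\oplus 0$ is a bounded, equi-continuous family which is a $*$-homomorphism modulo $M_2(B\otimes\mathbb K)$ and lifts the Busby invariant of $\varphi_f\oplus 0$, where the extension $\varphi_f$ of $A$ by $B$ is transported to one by $B\otimes\mathbb K$ through the corner $m\mapsto m\otimes e_{00}$; likewise $a\mapsto \overline{\varphi}^f(a)\oplus 0$ is such a lift of the Busby invariant of $\varphi^f\oplus 0$, the map $\varphi^f$ having already been checked to be a genuine extension before the statement. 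Applying the quotient map $q = q_{M_2(B\otimes\mathbb K)}$ to the displayed containment makes the two terms equal, so $\Ad\left(q(U)\right)$ sends the Busby invariant of $\varphi_f\oplus 0$ to that of $\varphi^f\oplus 0$. Since $U$ is a unitary in the multiplier algebra $M\left(M_2(B\otimes\mathbb K)\right)$, its image $q(U)$ is a unitary in the associated Calkin algebra, so this intertwining is precisely a (strong) unitary equivalence. Finally, using stability of $B$ to identify $M_2(B\otimes\mathbb K)\cong B\otimes\mathbb K\cong B$ delivers the asserted equivalence $\varphi_f\oplus 0 \sim \varphi^f\oplus 0$.

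I expect the only genuine obstacle to be bookkeeping rather than analysis: one must make sure that routing $\varphi_f$ through the corner $m\mapsto m\otimes e_{00}$ and adding a zero does not alter its unitary-equivalence class once $B\otimes\mathbb K$ is identified with $B$, and that the dilation $U$ furnished by Kasparov's stabilisation theorem really lives in $M\left(M_2(B\otimes\mathbb K)\right)$ and not merely in the Calkin algebra, so that the equivalence is implemented by a multiplier unitary as the notion of unitary equivalence of extensions requires.
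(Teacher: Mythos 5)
Your proposal is correct and follows exactly the paper's own route: the paper presents this lemma as an immediate consequence of the preceding construction of the partial isometry $V$ and its unitary dilation $U$ obtained from Kasparov's stabilisation theorem, which is precisely the material you organise into a proof. The bookkeeping points you flag (that $U$ is a genuine multiplier unitary and that the corner embedding and stability identifications preserve unitary equivalence classes) are the same details the paper leaves implicit, and you resolve them correctly.
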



\subsection{A key lemma}

Two asymptotic extensions $\varphi, \varphi' : A \to Q(B)$ are
\emph{strongly homotopic} when they define the same element in
$[[A,Q(B)]]$. This means that there is an asymptotic homomorphism
$\alpha : A \to C[0,1] \otimes Q(B)$ such that $\ev_0 \circ \alpha_t
= \varphi_t$ and $\ev_1 \circ \alpha_t = \varphi'_t$ for all $t$, where $\ev_s :
C[0,1] \otimes Q(B) \to Q(B)$ is evaluation at $s\in [0,1]$. In this
subsection we will relate a particular folding of $\varphi$ to a
folding of $\varphi'$, assuming that the strong homotopy $\alpha$
connecting $\varphi$ to $\varphi'$ is uniformly continuous. By
Lemma \ref{uniformlift}, applied with $D = C[0,1] \otimes M(B)$
and $D_0 = C[0,1]\otimes B$, there is an equi-continuous and
uniformly continuous lift $\overline{\alpha} : A \to C[0,1]\otimes
M(B)$ of $\alpha$ such that $\sup_{t \in [1,\infty)}
\left\|\overline{\alpha}_t(a)\right\| < \infty$ for all $a \in A$. Let
$\{u_n\}$ be a unit sequence in $B$ such that
\begin{equation}\label{functions}
\lim_{n \to \infty} \sup_{t \in [1,n+2]} \left[
\sup_{s\in [0,1]}\left\|\left(1-u_n\right)f(t)(s)\right\| -
\left\|q_{C[0,1] \otimes B}\left(f(t)\right)
\right\|\right] = 0
\end{equation}
when $f \in C_b\left([1,\infty), C[0,1]\otimes M(B)\right)$ is any of the following functions:
\begin{itemize}
\item $f(t) =\overline{\alpha}_t(a)\overline{\alpha}_t(b) -
      \overline{\alpha}_t(ab)$,
\item $f(t) =\overline{\alpha}_t(a) + \lambda\overline{\alpha}_t(b) -
      \overline{\alpha}_t(a+\lambda b)$, or
\item $ f(t) = \overline{\alpha}_t(a^*) -
  \overline{\alpha}_t(a)^*$
\end{itemize}
for any $a,b \in A$, $\lambda\in\mathbb C$. Furthermore, we
require also that
\begin{equation} \label{enu23} \lim_{n \to \infty} \sup \left\{\left\|
      u_n\overline{\alpha}_t(a)(s) - \overline{\alpha}_t(a)(s)u_n
    \right\|: \ t \in [1,n+2], \ s \in [0,1] \right\}= 0
\end{equation}
for all $a \in A$. That such a unit sequence exists follows from the
separability of $A$ and the equi-continuity of $\overline{\alpha}$ by
use of Lemma \ref{deltaepsilon}.

Let $\left\{t'_j\right\}$ and $\left\{t_j\right\}$ be discretizations. Then $f_0 = \left(\ev_0 \circ \overline{\alpha}, \left\{u_n\right\},
  \left\{t'_n\right\} \right)$ is folding data for $\ev_0 \circ \alpha$ and $f_1 = \left(\ev_1 \circ \overline{\alpha}, \left\{u_n\right\},
  \left\{t_n\right\} \right)$ is folding data for $\ev_1 \circ
\alpha$. The key lemma referred to in the title of this section is

\begin{lemma}\label{vladlemma} In the above setting, assume that $\kappa : A \to Q(B)$ is an
  extension such that $\left(\ev_0 \circ \alpha\right)_{f_0} \oplus
  \kappa$ is asymptotically split. It follows that $\left(\ev_1 \circ \alpha\right)_{f_1} \oplus
  \kappa$ is asymptotically split.
\end{lemma}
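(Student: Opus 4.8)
The plan is to realise the two foldings as the two ends of a single honest extension obtained by folding the homotopy $\alpha$ itself, and then to transport the asymptotic splitting from the $0$-end to the $1$-end along the block structure of the folding. Throughout I use the identification $B\otimes\mathbb K\cong B$ and \reflemma{alt}, which lets me pass freely between $\psi_f$ and its block-tridiagonal model $\psi^f$; since unitary equivalence and addition of $0$ preserve asymptotic splitness, it suffices to argue with the matrix models $(\ev_s\circ\alpha)^{(\cdot)}$ and their explicit lifts. First I fold the homotopy: using the folding data $\widetilde f=\left(\overline\alpha,\{u_n\},\{t_n\}\right)$ — legitimate precisely because $\overline\alpha$ is uniformly continuous and $\{u_n\}$ was chosen to satisfy the uniform-in-$s$ conditions (\ref{functions}) and (\ref{enu23}) — I obtain a lift $\overline\alpha^{\widetilde f}:A\to C[0,1]\otimes M(B)$ with
\[
\overline\alpha^{\widetilde f}(a)(s)=\sum_{i}\sum_{j=i-1}^{i+1}\Delta_i\,\overline\alpha_{t_i}(a)(s)\,\Delta_j\otimes e_{ij},
\]
whose composition with $q_B$ is an honest extension $\Phi:A\to C[0,1]\otimes Q(B)$ with $\ev_1\circ\Phi=(\ev_1\circ\alpha)^{f_1}$. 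The $0$-end $\ev_0\circ\Phi$ is the folding of $\ev_0\circ\alpha$ with the discretization $\{t_n\}$ rather than $\{t'_n\}$; to reconcile these I note that $\left\{(1-s)t'_n+st_n\right\}$ is again a discretization for every $s$, so folding the \emph{constant} homotopy $\ev_0\circ\alpha$ along it gives an honest extension joining the two foldings of $\ev_0\circ\alpha$, to which the transport argument below applies verbatim (alternatively one invokes \reflemma{uniform7}). I may therefore assume $\{t'_n\}=\{t_n\}$, so that $\ev_0\circ\Phi=(\ev_0\circ\alpha)^{f_0}$.

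The heart of the proof is the transport step, and it must exploit the block structure: homotopy invariance of asymptotic splitness is \emph{false} for general extensions, and the fixed, non-small multiplicative defect of the folding lift at $s=1$ is exactly the obstruction that the homotopy has to dissolve. Let $\eta_t:A\to M(B)$ be an asymptotic homomorphism lifting $(\ev_0\circ\alpha)^{f_0}\oplus\kappa$; this is the seed. Using the continuous family of folding lifts $\overline\alpha^{\widetilde f}(\cdot)(s)$ supplied by the folded homotopy, I would build a sliding-window asymptotic homomorphism $\mu_t$ as follows. For block cut-offs $N(t)\to\infty$ and window lengths $L(t)$ with $1/L(t)\to0$, assign to block $i$ a parameter $s_i(t)\in[0,1]$ that equals $1$ on the infinite tail, decreases linearly from $1$ to $0$ across the window $N(t)<i<N(t)+L(t)$, and equals $0$ on the initial segment; on the $s=0$ part I graft the good lift $\eta_t$ in place of the folding lift. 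After conjugating by a shift partial isometry the result is arranged, modulo $B$, to be exactly $(\ev_1\circ\alpha)^{f_1}\oplus\kappa$, the summand $\kappa$ and the vanishing defect on the receding tail being inherited from $\eta_t$, while the homotopy slides the target blocks continuously back to the split $0$-end.

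The main obstacle is to verify that $\mu_t$ is genuinely an asymptotic homomorphism lifting $(\ev_1\circ\alpha)^{f_1}\oplus\kappa$, and this is where every hypothesis is consumed. The multiplicative defect has two sources. The within-block defect $\overline\alpha_{t_i}(a)(s)\overline\alpha_{t_i}(b)(s)-\overline\alpha_{t_i}(ab)(s)$ is absorbed into $B$ \emph{uniformly in} $s$ precisely by (\ref{functions}) and (\ref{enu23}) together with the partition-of-unity relations a1)--a3) for the $\Delta_i$. The cross-block defect, arising from the tridiagonal overlaps where $s_i(t)\ne s_{i\pm1}(t)$, is controlled by the equi-continuity of $\overline\alpha$ in the $s$-direction together with $|s_i(t)-s_{i\pm1}(t)|\le 1/L(t)\to0$. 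Crucially, the slow variation of $\overline\alpha_{t_i}(a)(s)$ in the $t$-direction guaranteed by \reflemma{uniformlift} is what keeps all these estimates valid as $N(t),L(t)\to\infty$; without uniform continuity the block-to-block comparisons break down.

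The delicate bookkeeping — the true crux — is the swindle-type accounting that ensures the grafted, receding $s=0$ tail contributes \emph{exactly} the splitting datum for the $\kappa$-summand and nothing extra modulo $B$, so that $q_B\circ\mu_t$ equals $(\ev_1\circ\alpha)^{f_1}\oplus\kappa$ up to an asymptotically vanishing error while $\mu_t(a)\mu_t(b)-\mu_t(ab)\to0$. Once this is established, $\mu_t$ is the required asymptotic splitting and, transporting back through \reflemma{alt}, $(\ev_1\circ\alpha)_{f_1}\oplus\kappa$ is asymptotically split, completing the proof.
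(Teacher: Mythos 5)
Your strategy is in outline the paper's own: the paper also builds a sliding-window lift, $\psi_t(a)=\sum_{j}\Delta_j\,\alpha^{g_j(t)}_{h_j(t)}(a)\,\Delta_j$, in which the homotopy parameter $g_j(t)$ is $0$ on an initial segment of blocks, $1$ on the tail, and varies slowly in $j$, while the functions $h_j$ supplied by Lemma~\ref{disclemma} interpolate between the two discretizations. But the step you yourself call ``the true crux'' --- making the given splitting of $\left(\ev_0 \circ \alpha\right)_{f_0}\oplus\kappa$ interact with this window so that the output is an asymptotic homomorphism lifting $\left(\ev_1 \circ \alpha\right)_{f_1}\oplus\kappa$ --- is deferred rather than proved (``once this is established\dots''), and as described it cannot be performed: the asymptotic splitting $\eta_t$ (the paper's $\mu_t$) is a global map into $M_2(M(B))$ with no block structure, so ``grafting $\eta_t$ in place of the folding lift'' on the initial blocks is not a defined operation. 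What actually makes the gluing work in the paper is a cut-off path $v_t\in B$ with properties a7)--a11) and the splicing formula $\lambda^{11}_t(a)=v_t\mu^{11}_t(a)v_t+(1-v_t^2)^{1/2}\psi_t(a)(1-v_t^2)^{1/2}$, the other matrix entries of $\mu$ being kept, with a9) controlling the off-diagonal terms that involve $\kappa$. Asymptotic multiplicativity then rests on two matching conditions at the seam: a11), i.e. $(1-v_t)\bigl(\mu^{11}_t(a)-\psi^0(a)\bigr)\to0$, and (\ref{A1}), i.e. $v_t\bigl(\psi_t(a)-\psi^0(a)\bigr)\to0$ --- both the splitting and the window agree, near the interface, with the block-diagonal lift $\psi^0$ of the $s=0$ folding. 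Nothing in your proposal plays this role, and without it the verification cannot be completed; this is precisely the content of the lemma, not bookkeeping.

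There is a second, independent flaw: your reduction to $\{t'_n\}=\{t_n\}$. For the foldings of $\ev_0\circ\alpha$ along the interpolated discretizations $\{(1-s)t'_n+st_n\}$ to form a homotopy of extensions you need continuity in $s$ of $\sum_j\Delta_j\,\alpha^0_{(1-s)t'_j+st_j}(a)\,\Delta_j$, at least modulo $B$; but the parameter shift in block $j$ is $|s-s'|\,|t_j-t'_j|$, which is unbounded in $j$ (take $t_j=j$, $t'_j=j/2$), so uniform continuity of $\overline\alpha$ gives no control and the family need not be continuous even in $Q(B)$. Lemma~\ref{uniform7} does not repair this: it re-parametrises a single folding, it does not equalise two discretizations. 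The paper absorbs the change of discretization into the same sliding window, in the $t$-direction rather than the $s$-direction, via Lemma~\ref{disclemma}, whose condition ii) bounds $h_{j+1}(t)-h_j(t)$ and makes only finitely many blocks transitional for each fixed $t$; that is exactly why that lemma is stated.
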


For the proof we need the following

\begin{lemma}\label{disclemma} Let $\left\{t'_n\right\}$ and
  $\left\{t_n\right\}$ be two discretizations and $a_1
  < a_2 < \dots $ a strictly increasing sequence in $\mathbb N$.

There is a sequence $h_0 \leq h_1 \leq h_2 \leq \dots$ of
continuous functions $h_j : [1,\infty) \to [1,\infty)$ such that
\begin{enumerate}
\item[i)] $h_j(t) = t'_j, \ j \leq a_k, \ t \in [k,k+1]$,
\item[ii)] $h_{j+1}(t) - h_j(t) \leq \max \left\{\frac{1}{k}, t'_{j+1}
  - t'_j\right\} \ \forall j , \
  t \in [k,k+1]$,
\item[ii)] for all $n \in \mathbb N$ there is an $N_n \in \mathbb N$
  such that $h_j(t) =t_j$ when $t \in [1,n]$, $j \geq N_n$, and
\item[iv)] $h_j(t) \leq j$ for all $j \geq 1$ and all $t$.
\end{enumerate}
\begin{proof} Let $k \in \mathbb N$. Since $\lim_{j \to \infty}
  t'_{j+1} -t'_j = 0$ and $\lim_{j \to \infty} t_{j+1} - t_j = 0$
  there is a $b_k \geq a_k$ such that $\max \left\{ t'_{j+1} -
  t'_j, t_{j+1} - t_j\right\} \leq \frac{1}{k}$ for all $j \geq
b_k$. We arrange that $b_{k+1} > b_k$. On the interval $[k,k+1]$ we set $h_j(t) = t'_j$ when $j \leq
  b_k$. Since $\lim_{j \to \infty} t_{j+1} - t_j = 0$ and $\lim_{j \to
    \infty} t_j = \infty$
    there is an $m_k
  > b_k$
  such that $\frac{n}{k} + t'_{b_k} \geq t_n > t'_{b_k}$ for all $n \geq m_k$. We set
$$
h_j(t) = \max \left\{ \min \{ \frac{j}{k} + t'_{b_k}, \ t_j\}, \
  t'_{b_k} \right\}
$$
when $\ j > b_k$ and $t \in \left[k,k+\frac{1}{2}\right]$. Then
$h_j(t) = t_j$ when $j \geq m_k$. With these
choices we have defined the $h_j$'s on all the intervals
$\left[k,k+\frac{1}{2}\right], k = 1,2,3, \dots$, but it remains to
define the $h_j$'s on $\left[k+\frac{1}{2},k+1\right]$ when $j >
b_k$. For this note that $h_{j+1}\left(k+\frac{1}{2}\right) -
h_j\left(k + \frac{1}{2}\right) \leq \frac{1}{k}$ and
$$
h_{j+1}\left(k+1\right) -
h_j\left(k + 1\right) \leq \max \left\{\frac{1}{k +1}, t'_{j+1} -t'_j \right\} \leq \frac{1}{k}
$$
for all $j \geq b_k$. Hence by defining $h_j, \ j > b_k$, to be the linear function
on $\left[k+\frac{1}{2},k+1\right]$ which connects
$h_j\left(k+\frac{1}{2}\right)$ to $h_j\left(k+1\right)$ we have
obtained what we wanted.
\end{proof}
\end{lemma}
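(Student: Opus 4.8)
The plan is to build the $h_j$ interval by interval, defining them on $[k,k+1]$ for $k=1,2,\dots$ and splitting each such interval into the two halves $[k,k+\frac12]$ and $[k+\frac12,k+1]$. The guiding idea is that on $[k,k+1]$ the small indices should sit rigidly at $t'_j$, which yields i), while the large indices should already have relaxed onto $t_j$, which yields iii); the transition between these two regimes is carried out by a clamped, slowly rising function of $j$ whose rate of increase in $j$ is capped at $\frac1k$, which is exactly what the gap bound ii) demands. First I would fix, for each $k$, an index $b_k\ge a_k$ such that $\max\{t'_{j+1}-t'_j,\,t_{j+1}-t_j\}\le \frac1k$ for all $j\ge b_k$; such $b_k$ exists by a5), and I arrange $b_1<b_2<\cdots$. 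On all of $[k,k+1]$ I then set $h_j(t)=t'_j$ for $j\le b_k$. Since $a_k\le b_k$ this already gives i), gives the monotonicity $h_j\le h_{j+1}$ among these indices, and gives iv) for them because $t'_j\le j$.

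For the indices $j>b_k$ I would define $h_j$ on the first half $[k,k+\frac12]$ by the clamp
\[
h_j(t)=\max\Bigl\{\min\Bigl\{\tfrac{j-b_k}{k}+t'_{b_k},\,t_j\Bigr\},\,t'_{b_k}\Bigr\},
\]
constant in $t$. The upper ``ramp'' $\frac{j-b_k}{k}+t'_{b_k}$ is linear in $j$ with slope $\frac1k$ and equals $t'_{b_k}$ at $j=b_k$, so the clamp rises out of the value $t'_{b_k}=h_{b_k}$ monotonically, keeping the whole family nondecreasing in $j$. The point of the ramp is that, although $t_j\to\infty$, the increments $t_{j+1}-t_j\to0$ force $t_j$ to grow more slowly than the ramp; hence there is an $m_k>b_k$ with $t'_{b_k}<t_n\le \frac{n-b_k}{k}+t'_{b_k}$ for all $n\ge m_k$, and for $j\ge m_k$ the clamp selects $t_j$ exactly, so $h_j=t_j$ on $[k,k+\frac12]$. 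Taking $N_n=\max\{m_1,\dots,m_n\}$ then yields iii), once the second halves are handled compatibly. The bound iv) persists because $\min\{\cdot,t_j\}\le t_j\le j$ and $t'_{b_k}\le b_k<j$.

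It remains to define $h_j$, $j>b_k$, on the second half $[k+\frac12,k+1]$, and here I would simply take $h_j$ to be the linear function joining the already-fixed value $h_j(k+\frac12)$ to the value $h_j(k+1)$ forced by the next interval, namely $t'_j$ if $j\le b_{k+1}$ and the clamp of interval $k+1$ otherwise. Linearity preserves continuity at the break-points, preserves monotonicity in $j$ (the two endpoint families are monotone in $j$), and preserves iv) (both endpoints are $\le j$). The crux, and the step I expect to be the main obstacle, is the gap estimate ii) on this half. Because the gap $h_{j+1}-h_j$ of a linear interpolation is itself the linear interpolation of the two endpoint gaps, it suffices to bound $h_{j+1}-h_j$ at $k+\frac12$ and at $k+1$. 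At $k+\frac12$ the clamp is $\frac1k$-Lipschitz in $j$, since both the ramp (slope $\frac1k$) and $t_j$ (increments $\le\frac1k$ for $j\ge b_k$) are, and $\min$ and $\max$ with a constant preserve this; the delicate first step $j=b_k$ is controlled precisely because the ramp is anchored at $t'_{b_k}$, giving $h_{b_k+1}(k+\frac12)-t'_{b_k}\le\frac1k$. At $k+1$ the endpoint values belong to interval $k+1$, whose gaps are bounded by $\max\{\frac{1}{k+1},t'_{j+1}-t'_j\}$. Both endpoint gaps are therefore $\le\max\{\frac1k,t'_{j+1}-t'_j\}$, and hence so is their interpolation, establishing ii). Checking continuity of each $h_j$ across $t=k,\,k+\frac12,\,k+1$ is then routine, since each piece was defined to match its neighbour at the shared endpoint.
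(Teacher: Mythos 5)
Your construction follows the paper's proof almost step for step: the same choice of $b_k\ge a_k$ controlling both increment sequences, the same split of $[k,k+1]$ into two halves, the same clamp $\max\{\min\{\mathrm{ramp},\,t_j\},\,t'_{b_k}\}$ on the first half with $h_j=t_j$ for $j\ge m_k$, and the same linear interpolation on the second half. The one substantive difference is your ramp $\frac{j-b_k}{k}+t'_{b_k}$ versus the paper's $\frac{j}{k}+t'_{b_k}$, and your anchored version is in fact the correct one. With the paper's un-anchored ramp, the gap estimate ii) can fail at the transition index $j=b_k$: there $h_{b_k}=t'_{b_k}$ while
$h_{b_k+1}\left(k+\tfrac12\right)=\max\left\{\min\left\{\tfrac{b_k+1}{k}+t'_{b_k},\,t_{b_k+1}\right\},\,t'_{b_k}\right\}$,
so the gap equals $\min\left\{\tfrac{b_k+1}{k},\,t_{b_k+1}-t'_{b_k}\right\}$ whenever $t_{b_k+1}>t'_{b_k}$, and this can greatly exceed $\max\left\{\tfrac1k,\,t'_{b_k+1}-t'_{b_k}\right\}$ because the two discretizations are unrelated (for instance $t'_n=\sqrt n$, $t_n=\min\{n,10+\sqrt n\}$, $k=1$, $b_1=4$ gives a gap of $3$ against a required bound of $1$). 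Anchoring the ramp at $t'_{b_k}$, as you do, caps that first gap by $\tfrac1k$, and the Cesàro-type argument ($t_n/n\to 0$) still produces your $m_k$, so nothing else is disturbed. Your remaining verifications --- the $\tfrac1k$-Lipschitz-in-$j$ property of the clamp, bounding the interpolated gap on $[k+\tfrac12,k+1]$ by the two endpoint gaps, $N_n=\max\{m_1,\dots,m_n\}$ for iii), and $t'_{b_k}\le b_k<j$ together with $t_j\le j$ for iv) --- are sound and are carried out more explicitly than in the paper, which leaves most of them to the reader.
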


\emph{ Proof of Lemma \ref{vladlemma}.} Set $\alpha_t^s = \ev_s \circ \overline{\alpha}_t$,
  $\Delta_0 = \sqrt{u_0}$ and $\Delta_n = \sqrt{u_n - u_{n-1}}, n \geq
  1$. We define $\psi^0 : A \to M(B)$ such that
$$
\psi^0(a) =  \sum_{j=0}^{\infty} \Delta_j \alpha^0_{t'_j}(a)\Delta_j
.
$$
Note that $\psi^0$ is continuous thanks to the equi-continuity of
$\ev_0 \circ \overline{\alpha}_t, t\in [1,\infty)$, cf. Lemma 3.1 of \cite{MT2}, and that
$q_B \circ \psi^0 = \left(\ev_0 \circ \alpha\right)_{f_0}$. It
follows from our assumption that there is an equi-continuous asymptotic homomorphism
$\mu :  A \to M_2\left(M(B)\right)$ such that
$$
\mu_t = \left( \begin{matrix} \mu^{11}_t & \mu^{12}_t \\ \mu^{21}_t &
  \mu^{22}_t \end{matrix} \right),
$$
where $q_B \circ \mu^{11}_t = q_B \circ \psi^0, q_B \circ \mu^{12}_t =
  q_B \circ \mu^{21}_t = 0$ and $q_B \circ \mu^{22}_t = \kappa$ for
  all $t \in [1, \infty)$. It follows from Lemma \ref{deltaepsilon}
  that we can choose a continuous path $v_t, t\in [1, \infty)$, in the
 $C^*$-subalgebra of $B$ generated by the strictly positive
 element $b$ such that $t \mapsto v_t$ is
 norm-continuous, $0\leq v_t\leq 1$ for all $t$, and
\begin{enumerate}
\item[a7)] \label{enu24} $v_t \Delta_i = \Delta_i, \ i \leq t$,
\item[a8)] \label{enu25} $\lim_{t \to \infty} \left\|v_t
      \mu^{11}_t(a) -\mu^{11}_t(a)v_t\right\| = 0$ for all $a \in A$,
\item[a9)] \label{enu126} $\lim_{t \to \infty} \left\|\left(1 - v_t\right)
    \mu^{12}_t(a)\right\| = \lim_{t \to \infty} \left\|\left(1 - v_t\right)
    \mu^{21}_t(a)\right\| = 0$ for all $a \in A$,
\item[a10)] \label{enu26} $ \lim_{t \to \infty}  \left\|v_t\psi^0(a) - \psi^0(a)v_t \right\| = 0$ for all $a \in A$, and
\item[a11)] \label{enu27} $\lim_{t \to \infty} \left\|(1-v_t)\left[\mu^{11}_t(a) -
      \psi^0(a)\right] \right\| = 0$ for all $a \in A$.
\end{enumerate}
Since $\lim_{i \to \infty} v_t \Delta_i = 0$ for all $t$ there is an
increasing function $i : \mathbb N \to \mathbb N$ such that
\begin{equation}\label{vdelta}
\lim_{ k  \to \infty} \sup_{j \geq i(k)} \sup_{t \in [k,k+1]}\left\|v_t \Delta_j\right\| =
0 .
\end{equation}
Let $F_1 \subseteq F_2 \subseteq F_3 \subseteq \dots$ be a sequence of
finite subsets with dense union in $A$. For each $n \in \mathbb N$
there is an $\epsilon_n > 0$ such that
\begin{equation}\label{alpha}
\left\| \alpha^s_t(a) - \alpha^{s'}_t(a)\right\| \leq \frac{1}{n}
\end{equation}
when $\left|s-s'\right| \leq \epsilon_n$, $t \in [1,n+2], a \in
F_n$. Choose then a sequence of continuous non-increasing functions $g_k : [1,\infty)
\to [0,1], \ k = 0,1,2,3,\dots$, such that
\begin{enumerate}
\item[a12)] \label{enu28} for each $t \in [1,\infty)$, $g_k(t) = 1$ for all but
  finitely many $k$,
\item[a13)] \label{enu29} $g_i(t) = 0$ for all $i = 1,2,\dots, i(k)$, when $t \geq k$,
\item[a14)] \label{enu30} $g_k \leq g_{k+1}$ for all $k$, and
\item[a15)] \label{enu31} $g_{k+1}(t) - g_k(t) \leq \epsilon_n$ when $t \in [1,n+2]$,
  for all $k,n$.
\end{enumerate}

Since the $g_k$'s are non-increasing it follows from a12)
that there are numbers $a_1 <a_2 < a_3 < \dots $ in $\mathbb N$
such that
\begin{equation}\label{ak0}
a_k \geq i(k)
\end{equation}
and
\begin{equation}\label{ak}
g_j(t) = 1 , \ t \in [k,k+1], \ j \geq a_k -1 .
\end{equation}
We can then use Lemma \ref{disclemma} to obtain continuous functions
$h_j : [1,\infty) \to [1,\infty), j = 0,1,2,3, \dots $, such that $h_0 \leq h_1 \leq h_2 \leq \dots$ and
\begin{enumerate}
\item[a16)] \label{enu32} $h_j(t) = t'_j, \ j \leq a_k, \ t \in [k,k+1]$,
\item[a17)] \label{enu33} $h_{j+1}(t) - h_j(t) \leq \max \left\{\frac{1}{k}, t'_{j+1}
  - t'_j\right\} \forall j , \
  t \in [k,k+1]$,
\item[a18)] \label{enu34} for all $n \in \mathbb N$ there is an $N_n \in \mathbb N$
  such that $h_j(t) =t_j$ when $t \in [1,n]$ and $j \geq N_n$, and
\item[a19)] \label{enu134} $h_j(t) \leq j$ for all $j,t$.
\end{enumerate}
Now we set
$$
\psi_t(a) = \sum_{j=0}^{\infty}  \Delta_j
\alpha^{g_j(t)}_{h_j(t)}(a)\Delta_j .
$$
It follows from Lemma 3.1 of \cite{MT2} and the equi-continuity of
$\overline{\alpha}$ that $\psi_t : A \to M(B), \ t\in [1,\infty)$, is
an equi-continuous family. Furthermore, it follows from a12)
and a18) that for each $n \in \mathbb N$ there
is an $N'_n \in \mathbb N$ such that
\begin{equation}\label{part}
\psi_t(a) = \sum_{j=0}^{N'_n}  \Delta_j
\alpha^{g_j(t)}_{h_j(t)}(a)\Delta_j + \sum_{j = N'_n+1}^{\infty}
\Delta_j \alpha^1_{t_j}(a)\Delta_j
\end{equation}
for all $t \leq n$ and all $a \in A$. In particular, this shows that $q_B \circ \psi_t
= \left(\ev_1 \circ \alpha\right)_{f_1}$ and that $t \mapsto \psi_t(a)$ is
norm-continuous.

Let
$$
\lambda^{11}_t(a) = v_t\mu^{11}_t(a)v_t +
\left(1-v_t^2\right)^{\frac{1}{2}} \psi_t(a)\left(1-v_t^2\right)^{\frac{1}{2}}
$$
and set $\lambda^{ij}_t(a) = \mu^{ij}_t(a)$ for $(i,j) \neq
(1,1)$. Finally, set
$$
\Lambda_t(a) = \left( \begin{matrix} \lambda^{11}_t(a) &
    \lambda^{12}_t(a) \\ \lambda^{21}_t(a) & \lambda^{22}_t(a)
  \end{matrix} \right) .
$$
Then $\Lambda_t : A \to M_2\left(M(B)\right), t \in [1,\infty)$, is an
equi-continuous family of maps and since $q_B \circ \lambda^{11}_t =
\left(\ev_1 \circ \alpha\right)_{f_1}$, $q_B \circ \lambda^{12}_t =
q_B \circ \lambda^{21}_t = 0$ and $q_B \circ \lambda^{22}_t = \kappa$,
it suffices to show that $\left(\Lambda_t\right)_{t \in [1,\infty)}$
is an asymptotic homomorphism.

Each $\mu^{ij}$ is asymptotically
linear and $\mu^{ij}_t(a^*)$ agrees asymptotically with
$\mu^{ji}_t(a)^*$ since $\mu$ is an asymptotic
homomorphism. Furthermore, it follows from a7) and (\ref{functions}) that $a \mapsto
\left(1-v_t^2\right)^{\frac{1}{2}}
\psi_t(a)\left(1-v_t^2\right)^{\frac{1}{2}}$ is asymptotically linear
and asymptotically commutes with the involution as $t$ tends to
infinity. It is then clear that the same is true for $\lambda^{11}$
and hence for $\Lambda$.

We check that $\Lambda$ is
asymptotically multiplicative. For this we write $A(t) \sim B(t)$
between $t$-dependent elements from $M(B)$ when $\lim_{t \to \infty}
\left\|A(t) -B(t)\right\| = 0$. Let $a,b \in A$. It suffices to show that
\begin{equation}\label{tjek1}
\lambda^{11}_t(a)\lambda^{12}_t(b) +
\lambda^{12}_t(a)\lambda^{22}_t(b) \sim \lambda^{12}_t(ab)
\end{equation}
and
\begin{equation}\label{tjek2}
\lambda^{11}_t(a)\lambda^{11}_t(b) +
\lambda^{12}_t(a)\lambda^{21}_t(b) \sim \lambda^{11}_t(ab) .
\end{equation}

To handle (\ref{tjek1}) observe that $v_t
\mu^{11}_t(a)v_t\mu^{12}_t(b) \sim \mu^{11}_t(a)\mu^{12}_t(b)$ by a8) and a9), and that $\left(1-v_t^2\right)^{\frac{1}{2}}
\psi_t(a)\left(1-v_t^2\right)^{\frac{1}{2}} \mu^{12}_t(b) \sim 0$ by a9). Since $\mu$ is an asymptotic homomorphism we have also that
$\mu^{11}_t(a)\mu^{12}_t(b) + \mu^{12}_t(a)\mu^{22}_t(b) \sim
\mu^{12}_t(ab) = \lambda^{12}_t(ab)$. (\ref{tjek1}) follows from this.

It remains to verify (\ref{tjek2}). For this we prove first that
\begin{equation}\label{A1}
v_t\left(\psi_t(c) - \psi^0(c)\right) \sim  \left(\psi_t(c) -
  \psi^0(c)\right)v_t \sim  0
\end{equation}
for all $c \in A$. To establish (\ref{A1}) observe that the following estimate is valid when $t \geq k$:
\begin{equation*}
\begin{split}
&\left\|v_t\left(\psi_t(c) - \psi^0(c)\right)\right\|^2 =
\left\| v_t \left( \sum_{j > i(k)} \Delta_j \left(
      \alpha^{g_j(t)}_{h_j(t)}(c) -
      \alpha^0_{t'_j}(c)\right)\Delta_j\right) \right\|^2 \\
&   \ \ \ \ \ \ \ \ \ \  \ \ \ \ \ \ \ \ \ \  \ \ \ \ \ \ \ \ \ \ \ \  \ \ \ \ \ \ \ \ \ \   \ \ \ \ \ \
\ \ \ \ \ \ \ \ \  \ \ \ \ \ \ \ \ \ \ \ \ \ \   \text{(by
  a13), (\ref{ak0}) and
  a16))}\\
& = \left\| \sum_{j > i(k)} \sum_{l = j-1}^{j+1} \Delta_j\left(
      \alpha^{g_j(t)}_{h_j(t)}(c) -
      \alpha^0_{t'_j}(c)\right)^* \Delta_jv^2_t\Delta_l\left(
      \alpha^{g_l(t)}_{h_l(t)}(c) -
      \alpha^0_{t'_l}(c)\right)\Delta_l \right\| \\
& \ \ \ \ \ \ \ \ \ \ \ \ \ \ \ \ \ \ \ \ \ \ \ \ \ \ \ \ \ \ \ \  \ \
\ \ \ \ \ \ \ \ \ \ \ \  \
  \ \ \ \ \ \ \  \ \ \ \ \ \ \ \ \ \ \ \ \ \ \ \  \ \ \ \ \ \ \  \ \ \
  \ \ \ \ \ \ \ \ \ \text{(using a1))} \\
& \\
&\leq 12 \sup_{s\in[1,\infty)}
  \left\|\overline{\alpha}_s(c)\right\|^2\sup_{j \geq i(k)}
  \left\|\Delta_jv_t\right\| \ \ \ \ \ \ \ \ \ \ \text{(by Lemma 3.1 of \cite{MT2}).}
 \end{split}
\end{equation*}
In this way it follows from
(\ref{vdelta}) that $v_t\left(\psi_t(c) - \psi^0(c)\right) \sim
0$. The same arguments work to show that also $\left(\psi_t(c) -
  \psi^0(c)\right)v_t \sim 0$, giving us (\ref{A1}).

Note that (\ref{A1}) combined with a10) implies that
\begin{equation}\label{A2}
\left[v_t,\psi_t(c)\right] \sim 0
\end{equation}
and combined with a11) that
\begin{equation}\label{A3}
v_t(1 -v_t)\psi_t(c) \sim v_t(1-v_t)\mu^{11}_t(c)
\end{equation}
 for all $c \in A$.
Using (\ref{A2}), (\ref{A3}) and a8) we find that
\begin{equation}\label{step0}
\begin{split}
&\lambda^{11}_t(a)\lambda^{11}_t(b)  \\
& \sim v_t^4
\mu^{11}_t(a)\mu^{11}_t(b) + 2v_t^2\left(1 -
  v_t^2\right)\mu^{11}_t(a)\mu^{11}_t(b) + \left(1 -
  v_t^2\right)^2\psi_t(a)\psi_t(b)\\
& = \left(2v_t^2 - v_t^4\right)\mu^{11}_t(a)\mu^{11}_t(b) + \left(1 -
  v_t^2\right)^2\psi_t(a)\psi_t(b) .
\end{split}
\end{equation}
To continue we show next that
\begin{equation}\label{ogve}
(1-v_t) \psi_t(a)\psi_t(b) \sim (1-v_t) \psi_t(ab)
\end{equation}
for all $a,b \in A$. To this end let $t \in [k,k+1]$ and $a,b \in
F_k$. By using Lemma 3.1 from \cite{MT2} several times we find that
\begin{equation*}
\begin{split}
&(1 -v_t)\psi_t(a)\psi_t(b)  \\
& = (1-v_t) \sum_{j > t} \left[\sum_{l=j-1}^{j+1} \Delta_j
\alpha^{g_j(t)}_{h_j(t)}(a)\Delta_j\Delta_l
\alpha^{g_l(t)}_{h_l(t)}(b)\Delta_l \right] \\
& \ \ \ \ \ \ \ \ \ \ \ \ \ \ \ \ \ \ \ \ \ \ \ \ \ \ \ \ \ \ \ \ \ \
\ \ \ \ \ \ \ \ \ \ \ \ \ \ \ \ \ \ \ \ \ \ \ \ \ \ \ \ \ \ \ \ \ \ \
\ \ \ \ \ \ \ \ \ \
\text{(using a7))}\\
& = (1-v_t) \sum_{t < j \leq a_k} \left[\sum_{l=j-1}^{j+1} \Delta_j
\alpha^{g_j(t)}_{h_j(t)}(a)\Delta_j\Delta_l
\alpha^{g_l(t)}_{h_l(t)}(b)\Delta_l \right] \\
& \ \ \ \ \ \ \ \ \ \ \ \ \ + (1-v_t) \sum_{j > a_k} \left[\sum_{l=j-1}^{j+1} \Delta_j
\alpha^{1}_{h_j(t)}(a)\Delta_j\Delta_l
\alpha^{1}_{h_l(t)}(b)\Delta_l \right]  \\
& \ \ \ \ \ \  \ \ \ \ \ \ \ \ \ \ \ \ \ \ \  \ \ \ \ \ \ \ \ \ \ \ \
\ \ \ \ \ \ \ \ \ \ \ \ \ \ \ \ \   \ \ \ \ \ \ \ \ \ \ \ \ \ \ \ \ \
\ \ \ \ \ \ \ \ \ \ \ \ \ \ \ \ \text{(using (\ref{ak}))}\\
& \sim  (1-v_t) \sum_{ t < j \leq a_k} \left[\sum_{l=j-1}^{j+1} \Delta_j
\alpha^{g_j(t)}_{h_j(t)}(a)\Delta_j\Delta_l
\alpha^{g_j(t)}_{h_l(t)}(b)\Delta_l \right] \\
& \ \ \ \ \ \ \ \ \ \ \ \ \ + (1-v_t) \sum_{j > a_k} \left[\sum_{l=j-1}^{j+1} \Delta_j
\alpha^{1}_{h_j(t)}(a)\Delta_j\Delta_l
\alpha^{1}_{h_l(t)}(b)\Delta_l \right]  \\
& \ \ \ \ \ \  \ \ \ \ \ \ \ \ \ \
\ \ \ \ \ \ \ \ \ \ \ \ \ \ \ \ \   \ \ \ \ \ \ \ \ \ \ \ \ \ \ \ \ \
\ \ \ \ \ \ \ \
\ \ \ \ \ \ \ \ \ \ \  \text{(using (\ref{alpha}), a19) and a15) )}
\end{split}
\end{equation*}
\begin{equation*}
\begin{split}
& \sim  (1-v_t) \sum_{ t < j \leq a_k} \left[\sum_{l=j-1}^{j+1} \Delta_j
\alpha^{g_j(t)}_{h_j(t)}(a)\Delta_j\Delta_l
\alpha^{g_j(t)}_{h_j(t)}(b)\Delta_l \right] \\
& \ \ \ \ \ \ \ \ \ \ \ \ \ + (1-v_t) \sum_{j > a_k}\sum_{l=j-1}^{j+1} \Delta_j
\alpha^{1}_{h_j(t)}(a)\Delta_j\Delta_l
\alpha^{1}_{h_j(t)}(b)\Delta_l   \\
& \ \ \ \ \ \  \ \ \ \ \ \ \ \ \ \
\ \ \ \ \ \ \ \ \ \ \ \ \ \ \ \ \   \ \ \ \ \  \ \ \ \  \ \ \text{(using a17) and the
  uniform continuity of $\overline{\alpha}$)}\\
& \sim  (1-v_t) \sum_{ t < j \leq a_k} \Delta_j
\alpha^{g_j(t)}_{h_j(t)}(a)\alpha^{g_j(t)}_{h_j(t)}(b) \Delta_j + (1-v_t) \sum_{j > a_k} \Delta_j
\alpha^{1}_{h_j(t)}(a)
\alpha^{1}_{h_j(t)}(b)\Delta_j   \\
& \ \ \ \ \ \ \ \ \ \ \ \ \ \ \ \ \ \ \ \ \ \ \ \ \ \ \ \  \ \ \ \ \ \ \ \ \ \
\ \ \ \ \ \ \ \ \ \ \ \ \ \ \ \ \ \ \  \ \ \ \ \ \ \ \ \ \ \ \ \ \ \ \
\text{(using (\ref{enu23}), a19) and a3))}\\ 
& \sim (1-v_t) \sum_{ t < j \leq a_k} \Delta_j
\alpha^{g_j(t)}_{h_j(t)}(ab)\Delta_j + (1-v_t) \sum_{j > a_k} \Delta_j
\alpha^{1}_{h_j(t)}(ab)\Delta_j \ \ \  \ \ \ \ \ \ \ \ \text{(using a7))}\\
& = (1-v_t)\psi_t(ab)  \ \ \ \ \ \ \ \ \ \ \ \ \ \ \
\ \ \ \ \ \ \ \ \ \ \ \ \ \ \ \ \ \ \text{(using (\ref{enu24}))},
\end{split}
\end{equation*}
giving us (\ref{ogve}). Inserting (\ref{ogve}) into (\ref{step0}) we find that

\begin{equation*}\label{step1}
\begin{split}
&\lambda^{11}_t(a)\lambda^{11}_t(b)
 \sim \left(2v_t^2 -v_t^4\right)
\mu^{11}_t(a)\mu^{11}_t(b) + \left(1 -
  v_t^2\right)^2\psi_t(ab)\\
& \sim \left(2v_t^2 -v_t^4\right)
\left(\mu^{11}_t(ab) - \mu^{12}_t(a)\mu^{21}_t(b)\right) + \left(1 -
  v_t^2\right)^2\psi_t(ab) \\
& \ \ \ \ \ \ \ \ \ \ \ \ \ \ \ \ \ \ \ \ \ \ \ \ \ \  \ \ \ \ \ \ \ \
\ \ \ \ \ \ \ \ \ \ \ \ \ \ \ \ (\text{since $\mu$ is an
  asymptotic homomorphism})\\
& \sim \left(2v_t^2 -v_t^4\right)\mu^{11}_t(ab)  -  \mu^{12}_t(a)\mu^{21}_t(b) + \left(1 -
  v_t^2\right)^2\psi_t(ab) \\
& \ \ \ \ \ \ \ \ \ \ \ \ \ \ \ \ \ \ \ \ \ \ \ \ \ \ \  \ \ \ \ \ \ \ \ \ \ \ \ \ \ \ \ \ \
\ \ \ \ \ \ \ \ \ \ \ \ \ \ \ \ \ \ \ \ \ \ \ \ \ \ \ \ \ \ \ \ \ \ \
\ \ \ \ \ \ \ \ \ \ \ (\text{using a9)})\\
& = v_t^2\mu^{11}_t(ab) + \left(v_t^2 - v_t^4\right)\mu^{11}_t(ab) -  \mu^{12}_t(a)\mu^{21}_t(b) + \left(1 -
  v_t^2\right)^2\psi_t(ab) \\
& \sim  v_t^2\mu^{11}_t(ab)+ \left(v_t^2 -v_t^4\right)\psi_t(ab) -  \mu^{12}_t(a)\mu^{21}_t(b) + \left(1 -
  v_t^2\right)^2\psi_t(ab) \\
& \ \ \ \ \ \ \ \ \ \ \ \ \ \ \ \ \ \ \ \ \ \ \ \ \ \ \ \ \ \ \ \ \ \
\ \ \ \ \ \ \ \ \ \ \ \ \ \ \ \ \ \ \ \ \ \ \ \ \ \ \ \ \ \ \ \ \ \ \
\ \ \
\ \ \ \ \ \ \ \ \ \ \ \ \ \ \ \ \ \ (\text{using (\ref{A3})}) \\
& = v_t^2\mu^{11}_t(ab) + \left(1-v_t^2\right)\psi_t(ab) - \mu^{12}_t(a)\mu^{21}_t(b)\\
& \sim  v_t\mu^{11}_t(ab)v_t  + \left(1 -
  v_t^2\right)^{\frac{1}{2}}\psi_t(ab)\left(1 -
  v_t^2\right)^{\frac{1}{2}} -  \mu^{12}_t(a)\mu^{21}_t(b)\\
& \ \ \ \ \ \ \ \ \ \ \ \ \ \ \ \ \ \ \ \ \ \ \ \ \ \ \ \ \ \ \ \ \ \
\ \ \ \ \ \ \ \ \ \ \ \ \ \ \ \ \ \ \ \ \ \ \ \ \ \ \ \ \ \ \ \ \ \ \
\ \  \ \text{ (using a8) and (\ref{A2})),}
\end{split}
\end{equation*}
which gives us (\ref{tjek2}).

\qed

\subsection{On the dependence of the folding on the folding data}

We can now show that if a folding $\varphi_f$ of $\varphi$ is
semi-invertible then the same is true for any other folding of
$\varphi$ and that the unitary equivalence class of $\varphi_f$, modulo
asymptotically split extensions does not depend on the folding data.
The main step is the following lemma.

\begin{lemma}\label{crux2} Let $A$ and $B$ be $C^*$-algebras, $A$
  separable, $B$ stable and $\sigma$-unital. Let $\varphi : A \to
Q(B)$ be an asymptotic extension, and $f =\left( \overline{\varphi},
  \left\{u_n\right\}, \left\{t_n\right\}\right)$, $f'= \left(
  \widehat{\varphi}, \left\{u'_n\right\}, \left\{t'_n\right\} \right)$
two sets of folding data for $\varphi$. Assume that $\psi : A \to
Q(B)$ is an extension such that $\varphi_f \oplus \psi \oplus 0$ is
asymptotically split.

It follows that $\varphi_{f'} \oplus \psi \oplus 0$ is asymptotically
split.

\end{lemma}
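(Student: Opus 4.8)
The plan is to reduce the assertion to a single application of the key Lemma~\ref{vladlemma}. The data $f$ and $f'$ build $\varphi_f$ and $\varphi_{f'}$ from the \emph{same} asymptotic extension $\varphi$, but from different lifts, unit sequences and discretizations; Lemma~\ref{vladlemma} is designed to absorb a change of lift, through a strong homotopy, together with a change of discretization, but only for a fixed unit sequence. I would therefore first arrange a common unit sequence and a uniformly continuous homotopy, apply Lemma~\ref{vladlemma}, and then handle the change of unit sequence by a separate argument. This last step is the main obstacle.

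At the level of $Q(B)$ I take the constant homotopy $\alpha_t(a)(s)=\varphi_t(a)$ and lift it by linear interpolation of the two given lifts, $\overline{\alpha}_t(a)(s)=(1-s)\overline{\varphi}_t(a)+s\,\widehat{\varphi}_t(a)$. This is a genuine, equi-continuous lift, since $\overline{\varphi}_t(a)-\widehat{\varphi}_t(a)\in B$ forces $q_B\circ\overline{\alpha}_t(a)(s)=\varphi_t(a)$, and $\ev_0\circ\overline{\alpha}=\overline{\varphi}$, $\ev_1\circ\overline{\alpha}=\widehat{\varphi}$. By the argument preceding Lemma~\ref{vladlemma} (separability of $A$, equi-continuity of $\overline{\alpha}$, and Lemma~\ref{deltaepsilon}) I choose one unit sequence $\{w_n\}$ satisfying (\ref{functions}) and (\ref{enu23}) for $\overline{\alpha}$; specialising at $s=0,1$ shows $\{w_n\}$ is compatible with both $\overline{\varphi}$ and $\widehat{\varphi}$. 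Writing $\varphi_{g_0}$ for the folding with data $(\overline{\varphi},\{w_n\},\{t_n\})$ and $\varphi_{g_1}$ for that with data $(\widehat{\varphi},\{w_n\},\{t'_n\})$, these are the two endpoint foldings of $\alpha$.

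Since Lemma~\ref{vladlemma} requires the homotopy to be uniformly continuous, which $\overline{\alpha}$ need not be, I would re-parametrise the homotopy once: the construction of Lemma~\ref{uniform7}, applied to $\alpha$, yields a Lipschitz $r$ for which $\overline{\alpha}\circ r$, and hence each of $\overline{\varphi}\circ r$ and $\widehat{\varphi}\circ r$, is uniformly continuous, together with discretizations $\{s_n\}$, $\{s'_n\}$ satisfying $r(s_n)=t_n$ and $r(s'_n)=t'_n$, so that both endpoint foldings $\varphi_{g_0}$ and $\varphi_{g_1}$ are left unchanged; here one uses that the two ends of the homotopy in Lemma~\ref{vladlemma} may carry different discretizations. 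Lemma~\ref{vladlemma}, applied with $\kappa=\psi\oplus 0$ regarded as an extension into $Q(B)$ by stability of $B$, then yields that $\varphi_{g_0}\oplus\psi\oplus 0$ asymptotically split implies $\varphi_{g_1}\oplus\psi\oplus 0$ asymptotically split.

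It remains to pass between the given foldings and the common ones, i.e. to show that for a fixed uniformly continuous lift and a fixed discretization, replacing one compatible unit sequence by another preserves asymptotic splitness of the folding summed with $\psi\oplus 0$. This change of unit sequence is the heart of the matter, as Lemma~\ref{vladlemma} holds the unit sequence fixed, and I would establish it by re-running the mechanism in the proof of Lemma~\ref{vladlemma} with a path of partitions of unity in place of a path of lifts: choose a continuous path $\{u_n^\sigma\}_{\sigma\in[0,1]}$ of unit sequences from $\{u_n\}$ to $\{w_n\}$, interpolating the generating functions of the strictly positive $b$ so that $u_{n+1}^\sigma u_n^\sigma=u_n^\sigma$ persists, put $\Theta_j(t)=\sqrt{u_j^{\sigma_j(t)}-u_{j-1}^{\sigma_j(t)}}$ with cut-off parameters $\sigma_j(t)$ rising from $0$ to $1$ as $t\to\infty$ (already $1$ below the current level, still $0$ above it), and set $\psi_t(a)=\sum_{j=0}^{\infty}\Theta_j(t)\,\overline{\varphi}_{t_j}(a)\,\Theta_j(t)$, so that $q_B\circ\psi_t=\varphi_{g_0}$. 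Exactly as in Lemma~\ref{vladlemma}, interpolating the given splitting of $\varphi_f\oplus\psi\oplus 0$ against $\psi_t$ by a norm-continuous cut-off family $v_t$ from Lemma~\ref{deltaepsilon} (with $v_t\Theta_j(t)=\Theta_j(t)$ on the already-switched indices and the requisite asymptotic commutation) produces a splitting of $\varphi_{g_0}\oplus\psi\oplus 0$; Lemma~\ref{vladlemma} promotes it to a splitting of $\varphi_{g_1}\oplus\psi\oplus 0$; and the same change of unit sequence once more, now from $\{w_n\}$ to $\{u'_n\}$ with lift $\widehat{\varphi}$ and discretization $\{t'_n\}$, delivers the splitting of $\varphi_{f'}\oplus\psi\oplus 0$. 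The principal difficulty is thus the change of unit sequence, namely adapting the interpolation-and-cut-off construction of Lemma~\ref{vladlemma} to a homotopy of partitions of unity.
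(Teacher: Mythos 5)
Your reduction has the right architecture, and the middle step is a genuinely nice observation: since $\overline{\varphi}$ and $\widehat{\varphi}$ lift the \emph{same} $\varphi$, the linear interpolation $\overline{\alpha}_t(a)(s)=(1-s)\overline{\varphi}_t(a)+s\,\widehat{\varphi}_t(a)$ is a legitimate lift of the constant strong homotopy, so Lemma~\ref{vladlemma} (after arranging uniform continuity of both lifts by composing re-parametrisations as in Lemma~\ref{uniform7}) absorbs the change of lift \emph{and} of discretization in one stroke, for a fixed unit sequence $\{w_n\}$ chosen via Lemma~\ref{deltaepsilon} to satisfy (\ref{functions}) and (\ref{enu23}) for $\overline{\alpha}$. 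You also correctly identified that the change of unit sequence is the genuine residue of the problem. But your treatment of that residue has a real gap, in two places. First, the path of unit sequences you posit does not exist in general: if $u_n=f_n(b)$ and $w_n=g_n(b)$, then for $0<\sigma<1$ the relation $u^{\sigma}_{n+1}u^{\sigma}_n=u^{\sigma}_n$ forces $(1-\sigma)f_{n+1}+\sigma g_{n+1}=1$ on $\operatorname{supp}f_n\cup\operatorname{supp}g_n$, hence $f_{n+1}=1$ on $\operatorname{supp}g_n$ and $g_{n+1}=1$ on $\operatorname{supp}f_n$; these cross conditions are precisely an interleaving hypothesis which two arbitrary compatible unit sequences need not satisfy (take $f_n$ supported in $(\tfrac{1}{n+2},1]$ and $g_n$ supported in $(2^{-n-2},1]$). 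So "interpolating the generating functions so that $u^{\sigma}_{n+1}u^{\sigma}_n=u^{\sigma}_n$ persists" assumes what must be constructed. Second, even granting such a path, your map $\psi_t(a)=\sum_j\Theta_j(t)\overline{\varphi}_{t_j}(a)\Theta_j(t)$ with $j$-dependent parameters $\sigma_j(t)$ destroys the identities on which the whole estimating machinery of Lemma~\ref{vladlemma} rests: $\sum_j\Theta_j(t)^2\neq 1$ (the sum no longer telescopes when $\sigma_j$ varies with $j$) and $\Theta_i(t)\Theta_j(t)=0$ for $|i-j|\geq 2$ can fail across the crossover region, which travels to infinity with $t$. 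In Lemma~\ref{vladlemma} the partition $\{\Delta_j\}$ is \emph{fixed} and only the arguments $\alpha^{g_j(t)}_{h_j(t)}(a)$ vary; every step of (\ref{ogve}) and (\ref{tjek2}) invokes a1)--a3) and Lemma 3.1 of [MT2] for that fixed partition. "Exactly as in Lemma~\ref{vladlemma}" is therefore not available; the argument would have to be rebuilt, and it is not clear it can be.

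For comparison, the paper resolves this crux by a different mechanism altogether: it constructs a third unit sequence $\{u''_n\}$ interleaved with \emph{subsequences} of both $\{u_n\}$ and $\{u'_n\}$ (conditions a21), a22), a25), a26)), arranges (\ref{inaddition}) to absorb the difference of the two lifts, and then proves that the change of unit sequence (together with a Lipschitz re-parametrisation of the discretization, needed because interleaving forces a re-indexing $n\leftrightarrow m(n)$ --- an issue your sketch does not address) gives foldings that are \emph{unitarily equivalent} after adding $0$: the partial isometry $V=\sum_{i,j}\Delta''_i\Delta'_j\otimes e_{ij}$ conjugates one folded lift to the other modulo $B\otimes\mathbb K$ in the $\varphi^f$-picture of Lemma~\ref{alt}, and Kasparov's stabilisation theorem upgrades $V$ to a unitary. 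Thus in the paper the unit-sequence change is an exact equivalence modulo compacts, not a homotopy/gluing argument, and Lemma~\ref{vladlemma} is only ever invoked with a fixed lift and fixed unit sequence, to change discretizations. Your proposal would become a proof if you replaced your interpolation-of-partitions step by this interleaving-plus-unitary-equivalence step; as it stands, that step fails.
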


\begin{proof} Thanks to Lemma \ref{uniform7} we may assume
  that $\varphi$, $\overline{\varphi}$ and $\widehat{\varphi}$ are all
  uniformly continuous. Since $\lim_{n \to \infty} t_n = \lim_{n \to \infty}
  t'_n =\infty$ we can use
  Lemma \ref{deltaepsilon} recursively to obtain a unit sequence
  $\left\{u''_n\right\}$ and increasing functions $m, m' : \mathbb N \to \mathbb N$ such that
  $m(0) = m'(0) = 0$,
\begin{enumerate}
\item[a20)] \label{enu310} $t_{m(n)} \geq n$,
\item[a21)] \label{enu311} $u''_nu_{m(n)} = u_{m(n)}$,
\item[a22)] \label{enu312} $u_{m(n)}u''_{n-1} = u''_{n-1}$ for all $n \geq 1$, and
\item[a23)] \label{enu313} ${t_n -t_{n-1}} \leq {t_{m(n)} - t_{m(n-1)}}$, $n \geq 1$,
\end{enumerate}
and also
\begin{enumerate}
\item[a24)] \label{enu314} $t'_{m'(n)} \geq n$,
\item[a25)] \label{enu315} $u''_nu'_{m'(n)} = u'_{m'(n)}$,
\item[a26)] \label{enu316} $u'_{m'(n)}u''_{n-1} = u''_{n-1}$, $n \geq 1$, and
\item[a27)] \label{enu317} ${t'_n -t'_{n-1}} \leq {t'_{m'(n)} -
    t'_{m'(n-1)}}$, $n \geq 1$.
\end{enumerate}
In addition we arrange that $g =\left(\overline{\varphi},
  \left\{u''_n\right\}, \left\{t_n\right\} \right)$ is folding data
for $\varphi$ and that
\begin{equation}\label{inaddition}
\lim_{n \to \infty}  \sup_{1 \leq t \leq n+3} \left\|(1-u''_n)
    \left(\overline{\varphi}_t(a) -
      \widehat{\varphi}_t(a)\right)\right\| = 0
\end{equation}
for all $a \in A$. Then also $g' = \left(\overline{\varphi},
  \left\{u_n''\right\}, \left\{t_n'\right\}\right)$ is folding data
for $\varphi$. We claim that there are Lipschitz
re-parametrisations $r,r' : [1,\infty) \to [1,\infty)$
  such that
\begin{enumerate}
\item[a28)] \label{enu321} $r(n) \leq n$ for all $n \in \mathbb N$ and
  \item[a29)] \label{enu323} $\varphi_{g} \oplus 0$ is unitarily
  equivalent to $\varphi_{h} \oplus 0$, where $h =
  \left(\overline{\varphi}, \{u_n\}, \left\{r\left(t_n\right)\right\}\right)$,
\end{enumerate}
and
\begin{enumerate}
\item[a30)] \label{enu324} $r'(n) \leq n$ for all $n \in \mathbb N$ and
  \item[a31)] \label{enu326} $\varphi_{g'} \oplus 0$ is unitarily
  equivalent to $\varphi_{h'} \oplus 0$, where $h' =
  \left(\widehat{\varphi}, \left\{u'_n\right\}, \left\{
      r'\left(t'_n\right) \right\} \right)$.
\end{enumerate}
The construction of $r$ and $r'$ are almost identical, but slightly
more demanding for $r'$ since we pass from $\overline{\varphi}$ to $\widehat{\varphi}$. We
describe therefore only the construction of $r'$.

Define $r' : [1,\infty) \to [1,\infty)$ to be the continuous
function such that $r'$ is linear on $\left[t'_{m(n-1)},t'_{m(n)}
\right]$, $r'\left(t'_{m(n-1)}\right) = t'_{n-1}$ and
$r'\left(t'_{m(n)}\right) = t'_n$. Then $r'$ is Lipschitz thanks
to a27) and $\lim_{t \to \infty} r'(t) = \infty$ since
$\lim_{n \to \infty} t'_n = \infty$. Using a24) we find
that
$$
r'^{-1}\left([1,n]\right) \supseteq r'^{-1}\left([1,t'_n]\right)
\supseteq \left[1,t'_{m(n)}\right] \supseteq [1,n] .
$$
It follows that $r'(n) \leq n$ for all $n$. Furthermore, it is straightforward
to combine (\ref{inaddition}) with the fact that $g$ is folding data
for $\varphi$ to verify that the same is true for $g'$. (Recall that
$\widehat{\varphi}$ is uniformly continuous.) It remains now only to show that $\varphi_{g'} \oplus 0$ is unitarily equivalent to $\varphi_{h'} \oplus
0$. For this purpose note first that by Lemma \ref{alt} we may as well show that $\varphi^{g'} \oplus 0$ is unitarily
equivalent to $\varphi^{h'} \oplus 0$. This is done as
follows.

Set $\Delta'_0 = \sqrt{u'_0}, \ \Delta'_n = \sqrt{u'_n -u'_{n-1}}, n \geq 1$,
$\Delta''_0 = \sqrt{u''_0}, \ \Delta''_n = \sqrt{u''_n -
  u''_{n-1}}$. It follows from a25) and a26) that
\begin{equation}\label{zero}
m'(n-1) < k \leq m'(n) \ \Rightarrow \ \Delta'_k\Delta''_{j} = 0, \ j \notin \{n-1,n\} .
\end{equation}
Set $V = \sum_{i,j} \Delta''_i\Delta'_j \otimes e_{ij}$
which is a partial isometry in $M(B \otimes \mathbb K)$. By using
(\ref{zero}) and Lemma 3.1 of \cite{MT2} we find that
\begin{equation*}\label{oooq}
\begin{split}
&V\widehat{\varphi}^{h'}(a)V^* = \sum_{i,j} \sum_{k=0}^{\infty}
\sum_{l=k-1}^{k+1} \Delta''_i {\Delta'_k}^2
\widehat{\varphi}_{r'\left(t'_k\right)}(a){\Delta'_l}^2\Delta''_j \otimes e_{ij} \\
& = \sum_{i=0}^{\infty} \sum_{j=i-3}^{i+3} \sum_{k=0}^{\infty}\sum_{l=k-1}^{k+1}
 \Delta''_i {\Delta'_k}^2
\widehat{\varphi}_{r'\left(t'_k\right)}(a){\Delta'_l}^2\Delta''_j \otimes e_{ij} \\
& = \sum_{i=0}^{\infty} \sum_{j=i-3}^{i+3} \sum_{k=0}^{\infty}\sum_{l=k-1}^{k+1}
 \Delta''_i {\Delta'_k}^2
\widehat{\varphi}_{t'_i}(a){\Delta'_l}^2\Delta''_j \otimes e_{ij} \ \ \ \ \ \
\text{modulo $B \otimes \mathbb K$.}
\end{split}
\end{equation*}
It follows from (\ref{inaddition}) that
\begin{equation*}\label{oooq2}
\begin{split}
&\sum_{i=0}^{\infty} \sum_{j=i-3}^{i+3} \sum_{k=0}^{\infty}\sum_{l=k-1}^{k+1}
 \Delta''_i {\Delta'_k}^2
\widehat{\varphi}_{t'_i}(a){\Delta'_l}^2\Delta''_j \otimes e_{ij} \\
& = \sum_{i=0}^{\infty} \sum_{j=i-3}^{i+3} \sum_{k=0}^{\infty}\sum_{l=k-1}^{k+1}
 \Delta''_i {\Delta'_k}^2
\overline{\varphi}_{t'_i}(a){\Delta'_l}^2\Delta''_j \otimes e_{ij} \ \ \ \ \ \
\text{modulo $B \otimes \mathbb K$.}
\end{split}
\end{equation*}
Note that
$$
\sum_{k=0}^{\infty}\sum_{l=k-1}^{k+1}
 \Delta''_i {\Delta'_k}^2
\overline{\varphi}_{t'_i}(a){\Delta'_l}^2 = \sum_{k \geq m'(i-1)}\sum_{l=k-1}^{k+1}
 \Delta''_i {\Delta'_k}^2
\overline{\varphi}_{t'_i}(a){\Delta'_l}^2,
$$
thanks to (\ref{zero}). Since
\begin{equation*}
\begin{split}
&\left\|\sum_{k\geq m'(i-1)}^{\infty}\sum_{l=k-1}^{k+1}
  {\Delta'_k}^2
\left[\overline{\varphi}_{t'_i}(a),{\Delta'_l}^2\right]\right\|\\
& \leq \left\|\sum_{k\geq m'(i-1)}{\Delta'_k}^2 \right\|^{\frac{1}{2}} \left\|
  \sum_{k\geq m'(i-1)} \Delta'_k\left[\overline{\varphi}_{t'_i}(a),
    \sum_{l=k-1}^{k+1} {\Delta'_l}^2 \right]\left[\overline{\varphi}_{t'_i}(a),
    \sum_{l=k-1}^{k+1} {\Delta'_l}^2 \right]^*\Delta'_k
\right\|^{\frac{1}{2}}\\
& \leq \left\|
  \sum_{k\geq m'(i-1)} \Delta'_k\left[\overline{\varphi}_{t'_i}(a),
    \sum_{l=k-1}^{k+1} {\Delta'_l}^2 \right]\left[\overline{\varphi}_{t'_i}(a),
    \sum_{l=k-1}^{k+1} {\Delta'_l}^2 \right]^*\Delta'_k,
\right\|^{\frac{1}{2}}
\end{split}
\end{equation*}
it follows from the compatibility of $\left(\{u'_n\},\{t'_n\}\right)$
with $\overline{\varphi}$ and Lemma 3.1 of \cite{MT2} that
\begin{equation*}
\begin{split}
& \sum_{i=0}^{\infty} \sum_{j=i-3}^{i+3} \sum_{k=0}^{\infty}\sum_{l=k-1}^{k+1}
 \Delta''_i {\Delta'_k}^2
\overline{\varphi}_{t'_i}(a){\Delta'_l}^2\Delta''_j \otimes e_{ij} \\
&=  \sum_{i=0}^{\infty} \sum_{j=i-2}^{i+3} \sum_{k=0}^{\infty}\sum_{l=k-1}^{k+1}
 \Delta''_i {\Delta'_k}^2{\Delta'_l}^2
\overline{\varphi}_{t'_i}(a)\Delta''_j \otimes e_{ij} \ \ \ \ \ \
\text{modulo $B \otimes \mathbb K$}\\
&= \sum_{i=0}^{\infty} \sum_{j=i-2}^{i+3}
 \Delta''_i
\overline{\varphi}_{t'_i}(a)\Delta''_j \otimes e_{ij} \ \ \ \ \ \ \ \
\ \ \ \ \ \ \ \
\text{(using a1) - a3) for $\left\{u_n'\right\}$)}\\
& = \overline{\varphi}^{g'}(a).
\end{split}
\end{equation*}
Since $V^*V \widehat{\varphi}^{h'}(a) =\widehat{\varphi}^{h'}(a)V^*V =\widehat{\varphi}^{h'}(a) $ modulo $B
\otimes \mathbb K$ it follows that $V^* \overline{\varphi}^{g'}(a)V =
\widehat{\varphi}^{h'}(a)$ modulo $B \otimes
\mathbb K$. Thus an application of Kasparov's stabilisation
theorem as in the proof of Lemma \ref{alt} shows that $V$ can be
dilated to give a unitary equivalence between $\varphi^{g'} \oplus 0$
and $\varphi^{h'} \oplus 0$. Hence a31) follows from Lemma \ref{alt}.

Using a28) - a31) we can now complete the proof as
follows: By assumption $\varphi_f \oplus \psi \oplus 0$ is
asymptotically split. Since $f$ and $h$ only differ in the
discretization sequences, it follows from Lemma \ref{vladlemma} that
$\varphi_h \oplus \psi \oplus 0$ is asymptotically split. Then
(\ref{enu323}) implies that so is $\varphi_g \oplus \psi \oplus
0$. $g$ and $g'$ differ also only in the discretization sequence so
another application of Lemma \ref{vladlemma} shows that also
$\varphi_{g'}\oplus \psi \oplus 0$ is asymptotically split. Hence a31) shows that $\varphi_{h'} \oplus \psi \oplus 0$ is
asymptotically split and a final application of Lemma \ref{vladlemma}
implies then that the same is true for $\varphi_{f'} \oplus \psi
\oplus 0$.
\end{proof}



We can now combine Lemma \ref{crux2} with Lemma \ref{vladlemma} to
obtain the following:

\begin{prop}\label{removefold} Let $A$ and $B$ be $C^*$-algebras, $A$
  separable, $B$ stable and $\sigma$-unital. Let $\varphi, \varphi' : A \to Q(B)$ be
  asymptotic extensions which are strongly homotopic. Let $f$ and $f'$
  be any folding data for $\varphi$ and $\varphi'$, respectively. Assume that $\psi : A \to Q(B)$ is an extension such that $\varphi_f \oplus
\psi \oplus 0$ is asymptotically split.

It follows that $\varphi'_{f'}
\oplus \psi \oplus 0$ is asymptotically split.
\begin{proof} Let $f = \left(\overline{\varphi}, \left\{u_n\right\},
    \left\{t_n\right\}\right)$ be folding data for $\varphi$ and $f' = \left(\overline{\varphi'}, \left\{u'_n\right\},
    \left\{t'_n\right\}\right)$ for $\varphi'$. It follows from Lemma \ref{uniform7} that we can assume
  that $\varphi$ and $\varphi'$ are uniformly continuous. Let $\alpha
  : A \to C[0,1] \otimes Q(B)$ be a strong homotopy connecting
  $\varphi$ and $\varphi'$. By Lemma 4.3 of \cite{MT2} (or Lemma
  \ref{uniform7} above) there is a
  Lipschitz re-parametrisation $r : [1,\infty) \to [1,\infty)$ such that
  $\alpha^r$ is uniformly continuous. Since $r$ is Lipschitz there are strong
  homotopies consisting of uniformly continuous asymptotic
  homomorphisms $A \to C[0,1]\otimes Q(B)$ connecting $\varphi$ to
  $\varphi^r$ and $\varphi'$ to ${\varphi'^r}$. Concatenation with
  $\alpha^r$ gives us a strong homotopy which connects $\varphi$ to
  $\varphi'$ and consists of a uniformly continuous asymptotic
  homomorphism $A \to C[0,1]\otimes Q(B)$. The desired conclusion
  follows then by combining Lemma \ref{crux2} with Lemma \ref{vladlemma}.
\end{proof}
\end{prop}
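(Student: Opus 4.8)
The plan is to reduce everything to the two analytic engines already built: Lemma \ref{vladlemma}, which transports asymptotic splitness across a strong homotopy but only when that homotopy is uniformly continuous and the foldings at its two endpoints are the specific ones manufactured from a common uniformly continuous lift with a common unit sequence; and Lemma \ref{crux2}, which says that for a fixed asymptotic extension the class of $\varphi_f \oplus \psi \oplus 0$ modulo asymptotically split extensions is independent of the chosen folding data. All the genuine estimates live inside those two lemmas, so the work here is organizational: we are handed two a priori unrelated asymptotic extensions $\varphi,\varphi'$ together with arbitrary folding data $f,f'$, and we must bend this data into the rigid shape demanded by Lemma \ref{vladlemma}.

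First I would use Lemma \ref{uniform7} to replace $\varphi$ and $\varphi'$ by uniformly continuous representatives, which is harmless since it does not alter the foldings up to the equivalence at issue. Next I would pick a strong homotopy $\alpha : A \to C[0,1]\otimes Q(B)$ connecting $\varphi$ and $\varphi'$. As $\alpha$ need not be uniformly continuous, I would apply a Lipschitz re-parametrisation $r$ (Lemma 4.3 of \cite{MT2}, or Lemma \ref{uniform7}) so that $\alpha^r$ is uniformly continuous; but $\alpha^r$ now joins $\varphi^r$ to ${\varphi'}^r$ rather than $\varphi$ to $\varphi'$. I would correct the endpoints by the standard linear interpolation between the identity re-parametrisation and $r$: because $r$ is Lipschitz and $\varphi,\varphi'$ are uniformly continuous, this yields uniformly continuous strong homotopies from $\varphi$ to $\varphi^r$ and from ${\varphi'}^r$ to $\varphi'$. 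Concatenating these two with $\alpha^r$ produces a single uniformly continuous strong homotopy $\beta$ running $\varphi \rightsquigarrow \varphi^r \rightsquigarrow {\varphi'}^r \rightsquigarrow \varphi'$.

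With $\beta$ fixed, I would choose a uniformly continuous lift $\overline{\beta}$ and a common unit sequence, producing exactly the endpoint folding data $f_0 = \left(\ev_0\circ\overline{\beta},\{u_n\},\{t'_n\}\right)$ for $\varphi$ and $f_1 = \left(\ev_1\circ\overline{\beta},\{u_n\},\{t_n\}\right)$ for $\varphi'$ of the setup preceding Lemma \ref{vladlemma}. Setting $\kappa = \psi \oplus 0$, regarded as an extension into $Q(B)\cong M_2(Q(B))$ via stability of $B$, I would then run the chain: Lemma \ref{crux2} upgrades the hypothesis that $\varphi_f \oplus \psi \oplus 0$ is asymptotically split to the statement that $\varphi_{f_0}\oplus\psi\oplus 0$ is; Lemma \ref{vladlemma} carries this across $\beta$ to give that $\varphi'_{f_1}\oplus\psi\oplus 0$ is asymptotically split; and a final application of Lemma \ref{crux2} to $\varphi'$ passes from $f_1$ to the given $f'$, yielding that $\varphi'_{f'}\oplus\psi\oplus 0$ is asymptotically split. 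The main obstacle is precisely the endpoint correction: one must verify that interpolating the re-parametrisation preserves both the asymptotic-homomorphism property and uniform continuity in $t$, so that the assembled $\beta$ is truly admissible for Lemma \ref{vladlemma}; once $\beta$ is in the correct form, the three-step chain is routine bookkeeping.
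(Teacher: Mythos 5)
Your proposal is correct and follows essentially the same route as the paper's own proof: uniformize $\varphi,\varphi'$ via Lemma \ref{uniform7}, make the connecting homotopy uniformly continuous by a Lipschitz re-parametrisation, repair the endpoints by interpolating re-parametrisations, concatenate, and then combine Lemma \ref{crux2} with Lemma \ref{vladlemma}. Your write-up in fact makes explicit the bookkeeping (taking $\kappa = \psi \oplus 0$ and the three-step chain $f \to f_0$, $f_0 \to f_1$, $f_1 \to f'$) that the paper compresses into its final sentence.
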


\subsection{The pairing}

To obtain the desired pairing between extensions and asymptotic
homomorphisms we must review the composition product of asymptotic
homomorphisms, as defined by Connes and Higson in \cite{CH}, in a form
suitable for the present purpose.

\begin{lemma}\label{pair0} Let $A,A'$  and $D$ be $C^*$-algebras,
  $A,A'$ separable. Let $\varphi : A \to D$ and $\lambda : A' \to A$ be
  equi-continuous asymptotic homomorphisms. Let $X \subseteq A'$ be a
  $\sigma$-compact subset with dense span in $A'$.

There is a re-parametrisation $s : [1,\infty) \to [1,\infty)$ and
an equi-continuous family of maps $\kappa_{t,x} : A' \to D, t,x
\in [1,\infty)$, $t \geq x$, such that
\begin{enumerate}
\item[i)] $\lim_{x \to \infty} \sup_{t \geq x} \left\| \varphi_t \circ
  \lambda_{s(x)}(a) - \kappa_{t,x}
(a) \right\| = 0$ for all $a \in X$, and
\item[ii)] $\lim_{x \to \infty} \sup_{t \geq x} \left\| \kappa_{t,x}(a)\kappa_{t,x}(b)
  - \kappa_{t,x}(ab)\right\| = 0$,
\item[iii)] $\lim_{x \to \infty} \sup_{t \geq x} \left\| \kappa_{t,x}(a) + z\kappa_{t,x}(b)
  - \kappa_{t,x}(a + z b)\right\| = 0$,
\item[iv)] $\lim_{x \to \infty} \sup_{t \geq x} \left\| \kappa_{t,x}(a^*)
  - \kappa_{t,x}(a)^*\right\| = 0$
\item[v)] $\sup_{t,x} \left\|\kappa_{t,x}(a)\right\| < \infty$
\end{enumerate}
for all $a,b \in A'$ and all $z \in \mathbb C$.
\end{lemma}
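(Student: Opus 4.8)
The statement is the Connes--Higson composition product $\varphi\bullet\lambda$ recast in a two-parameter form, and the plan is to take $\kappa_{t,x}=\varphi_t\circ\lambda_{s(x)}$ for a re-parametrisation $s$ that grows slowly enough; then (i) holds with $0$ on the left and the only real content is the choice of $s$ forcing (ii)--(v). Before constructing $s$ I would record the standard upgrade of the defining relations of an equi-continuous asymptotic homomorphism to compact sets: for compact $L$ in the domain the multiplicativity, linearity and $*$-defects tend to $0$ \emph{uniformly} over $L$, and moreover $\limsup_t\sup_{c\in L}\bigl(\|\varphi_t(c)\|-\|c\|\bigr)\le 0$. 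Each follows from the pointwise version by an $\epsilon$-net argument using equi-continuity. I would also fix an increasing compact exhaustion $X=\bigcup_n K_n$ and replace $K_n$ by a larger compact set $\tilde K_n$ containing $K_n^{*}$, the products $K_nK_n$, and the combinations $a+zb$ with $a,b\in K_n$, $|z|\le n$, so that every element entering (ii)--(iv) at level $n$ lies in $\tilde K_n$.

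The construction of $s$ is the heart of the matter and I would do it in two stages, in this order to avoid circularity. First, using only $\lambda$, choose levels $R_1<R_2<\cdots\to\infty$ with the property that the uniform-$\tilde K_n$ defects of $\lambda$ are $\le 1/n$ for every $r\ge R_n$. This fixes the compact sets $L_n:=\lambda_{[R_n,R_{n+1}]}(\tilde K_n)\subseteq A$ and their products. Only then, using $\varphi$, choose switching times $x_1<x_2<\cdots\to\infty$ such that for all $t\ge x_n$ the uniform-$L_n$ (and $L_nL_n$) defects of $\varphi$ are $\le 1/n$ and $\sup_{c\in L_n}\bigl(\|\varphi_t(c)\|-\|c\|\bigr)\le 1$. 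Finally let $s$ be the continuous non-decreasing function with $s(x_n)=R_n$ that is linear in between; it is a re-parametrisation, and for $x\in[x_n,x_{n+1}]$ and $a\in\tilde K_n$ one has $\lambda_{s(x)}(a)\in L_n$.

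With $\kappa_{t,x}:=\varphi_t\circ\lambda_{s(x)}$, equi-continuity of $\kappa$ follows from those of $\varphi$ and $\lambda$, and (i) holds with equality. For (ii), given $a,b\in K_n$ and $t\ge x\in[x_n,x_{n+1}]$, I would split $\kappa_{t,x}(a)\kappa_{t,x}(b)-\kappa_{t,x}(ab)$ into the multiplicativity defect of $\varphi$ on the pair $\lambda_{s(x)}(a),\lambda_{s(x)}(b)\in L_n$, which is $\le 1/n$ by the choice of $x_n$, plus $\varphi_t$ applied to the $\lambda$-defect $\lambda_{s(x)}(a)\lambda_{s(x)}(b)-\lambda_{s(x)}(ab)$, whose norm is $\le 1/n$ and which the norm bound sends to something $\le 2/n$; (iii) and (iv) are handled the same way. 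This proves (ii)--(iv) for $a,b\in X$, and they extend to $\operatorname{span}X$ by asymptotic linearity and to all of $A'$ by equi-continuity, since $X$ has dense span. For (v) I would first treat $a\in X$: as $\limsup_r\|\lambda_r(a)\|\le\|a\|$ gives a bound $\|\lambda_{s(x)}(a)\|\le C_a$, the norm bound yields $\|\kappa_{t,x}(a)\|\le C_a+1$ once $x\ge x_m$ (with $a\in K_m$), while the finitely many earlier stages contribute finite suprema by compactness of $\lambda_{[R_n,R_{n+1}]}(\{a\})$; the general case follows by equi-continuity and density.

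The step I expect to be the main obstacle is precisely the design of $s$. One must decouple the two clocks, letting $x$ govern the maturity of $\lambda$ while \emph{every} $t\ge x$ governs $\varphi$, and the thresholds controlling the defects of $\varphi$ depend on compact sets $L_n$ that themselves depend on the already-chosen levels of $s$; getting the order of the two recursions right (levels from $\lambda$ first, times from $\varphi$ second) is what makes the argument non-circular. The boundedness assertion (v) is the other delicate point: because the full path $\{\lambda_r(a):r\ge 1\}$ need not be relatively compact, the pointwise inequality $\limsup_t\|\varphi_t(c)\|\le\|c\|$ is not enough and one is forced to use its uniform-on-compacta strengthening together with the staging $t\ge x\ge x_n$.
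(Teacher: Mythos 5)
Your construction of the re-parametrisation $s$ (the two-clock staging: levels $R_n$ from $\lambda$ first, switching times $x_n$ from $\varphi$ second) is essentially the Connes--Higson method that the paper also invokes to obtain its uniform estimates a32)--a39), and that part of your argument is sound. The genuine gap is in the one-sentence extension step: ``they extend to $\operatorname{span}X$ by asymptotic linearity and to all of $A'$ by equi-continuity.'' This does not work as stated. Your controlled compact sets $\tilde K_n$ contain only \emph{two-term} combinations $a+zb$ with $a,b\in K_n$, $|z|\le n$, so a general element $a'=\sum_{i=1}^m z_ia_i$ of $\operatorname{span}X$ lies in no $\tilde K_n$; consequently $\lambda_{s(x)}(a')$ need not lie in any $L_n$, and the defect and norm controls you arranged for $\varphi$ --- which hold \emph{only} on $L_n$ and $L_nL_n$, uniformly in $t\ge x_n$ --- say nothing about it. Asymptotic linearity of $\lambda$ only places $\lambda_{s(x)}(a')$ near $\sum_i z_i\lambda_{s(x)}(a_i)$, but the set $\sum_i z_iL_n$ is not one on which $\varphi$ has been controlled, and an induction on the number of terms does not close because $\tilde K_n$ is not stable under further combinations. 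This is fatal for the subsequent density step as well: since $X$ itself is only assumed to have dense \emph{span}, the equi-continuity argument cannot even begin until (v) (and (ii)--(iv)) are secured on a genuinely dense set, and the obstruction you yourself identify at the end --- boundedness of $\{\lambda_r(a)\}$ without relative compactness defeats pointwise estimates on $\varphi$ --- applies verbatim to every element of $\operatorname{span}X\setminus X$, which is exactly the case your argument leaves unhandled.

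The gap is repairable in two ways. Within your framework, you must build the exhaustion so that the compact sets are closed under the $*$-algebra operations (adjoints, products, and $n$-term linear combinations with coefficients of modulus $\le n$), so that they exhaust the dense $*$-subalgebra generated by $X$; then your staging argument gives (ii)--(v) on that subalgebra, and the equi-continuity transport to $A'$ (which does require (v) on the dense set, in the order (v) first, then (ii)--(iv)) goes through. The paper instead avoids the problem entirely: it keeps the uniform estimates only on $X$, forms the $C^*$-algebra $\mathcal A=C_b(Z,D)$ with $Z=\{(t,x):t\ge x\}$ and the ideal $\mathcal J$ of functions with $\lim_{x\to\infty}\sup_{t\ge x}\|f(t,x)\|=0$, observes that $a\mapsto q(f_a)$ (where $f_a(t,x)=\varphi_t\circ\lambda_{s(x)}(a)$) extends to a genuine $*$-homomorphism $\Phi:A'\to\mathcal A/\mathcal J$, and then uses the Bartle--Graves selection theorem to pull $\Phi$ back to a continuous family $\kappa_{t,x}=S\circ\Phi(\cdot)(t,x)$ defined on all of $A'$. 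This is why in the paper $\kappa_{t,x}$ is \emph{not} literally $\varphi_t\circ\lambda_{s(x)}$ and conclusion (i) holds only asymptotically rather than with equality: the selection device is precisely what replaces the extension step you asserted without proof.
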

\begin{proof} By the method used to define the composition product of
  $\varphi$ and $\lambda$ in \cite{CH} we get a re-parametrisation $r
  : [1,\infty) \to [1,\infty)$ such that $r(1) = 1$ and
\begin{enumerate}
\item[a32)] $\limsup_{ t \to \infty} \sup_{y \geq r(t)} \left\| \varphi_{y} \circ
  \lambda_t(a) - \varphi_{y} \circ
  \lambda_t(b) \right\| \leq \left\|a-b\right\|$,
\item[a33)]  $\lim_{t \to \infty} \sup_{y \geq r(t)} \left\| \varphi_{y} \circ
  \lambda_t(a) \varphi_{y} \circ
  \lambda_t(b) - \varphi_{y} \circ
  \lambda_t(ab)\right\| = 0$,
\item[a34)]  $\lim_{t \to \infty} \sup_{y \geq r(t)} \left\| \varphi_{y} \circ
  \lambda_t(a) + z\varphi_{y} \circ
  \lambda_t(b) - \varphi_{y} \circ
  \lambda_t(a+z b)\right\| = 0$,
\item[a35)]  $\lim_{t \to \infty} \sup_{y \geq r(t)} \left\| \varphi_{y} \circ
  \lambda_t(a^*)  - \varphi_{y} \circ
  \lambda_t(a)^*\right\| = 0$
\end{enumerate}
for all $a,b \in X$ and all $z \in \mathbb C$. Then $s =
r^{-1} : [1,\infty) \to [1,\infty)$ is a re-parametrisation such that
\begin{enumerate}
\item[a36)]  $\limsup_{ x \to \infty} \sup_{t \geq x} \left\| \varphi_{t} \circ
  \lambda_{s(x)}(a) - \varphi_{t} \circ
  \lambda_{s(x)}(b) \right\| \leq \left\|a-b\right\|$,
\item[a37)] $\lim_{x \to \infty} \sup_{t \geq x} \left\| \varphi_{t} \circ
  \lambda_{s(x)}(a) \varphi_{t} \circ
  \lambda_{s(x)}(b) - \varphi_{t} \circ
  \lambda_{s(x)}(ab)\right\| = 0$,
\item[a38)]  $\lim_{x \to \infty} \sup_{t \geq x}  \left\| \varphi_{t} \circ
  \lambda_{s(x)}(a) + z\varphi_{t} \circ
  \lambda_{s(x)}(b) - \varphi_{t} \circ
  \lambda_{s(x)}(a+ zb)\right\| = 0$,
\item[a39)]  $\lim_{x \to \infty} \sup_{t \geq x} \left\| \varphi_{t} \circ
  \lambda_{s(x)}(a^*)  - \varphi_{t} \circ
  \lambda_{s(x)}(a)^*\right\| = 0$
\end{enumerate}
for all $a,b \in X$ and all $z \in \mathbb C$. Set
$Z = \left\{ (t,x) \in [1,\infty)^2 : \ t \geq x \right\}$,
$\mathcal A = C_b\left(Z,D\right)$
and
$$
\mathcal J = \left\{ f \in \mathcal A : \ \lim_{x \to \infty} \sup_{t
    \geq x}\left\|f(t,x)\right\| = 0 \right\} .
$$
Then $\mathcal J$ is an ideal in $\mathcal A$ and we let $q : \mathcal
A \to \mathcal A/\mathcal J$ be the quotient map. It follows from
a36)-a39) that there is a $*$-homomorphism $\Phi : A' \to \mathcal
A/\mathcal J$ such that $\Phi(a) = q(f_a)$ for each $a \in X$,
where $f_a \in \mathcal A$ is defined such that $f_a(t,x) =
\varphi_t\circ \lambda_{s(x)}(a)$. By the Bartle-Graves selection
theorem there is a continuous right-inverse $S : \mathcal A/\mathcal J
\to \mathcal A$ for $q$. Set $\kappa_{t,x}(a) = S\circ
\Phi(a)(t,x)$. Then i)-iv) hold.
 \end{proof}

With the re-parametrisation $s$ from Lemma \ref{pair0} at hand we can now introduce the composition product
$\bullet : \ [[A,D]] \times [[A',A]] \to [[A',D]]$
of Connes and Higson, \cite{CH}, such that
$$
[\varphi] \bullet [\lambda] = [\Phi],
$$
where $\Phi : A' \to
D$ is any equi-continuous asymptotic homomorphism with the property
that
$$
\lim_{t \to \infty} \Phi_t(a) - \kappa_{t,s'(t)}(a) = 0
$$ for all
$a \in X$ and $s'$ is any re-parametrisation for which $s'
\leq s$.



\begin{lemma}\label{pair1} Let $A,A'$ be separable $C^*$-algebra, $B$
  stable and $\sigma$-unital. Let $\varphi : A \to Q(B)$ be a
  semi-invertible extension and $\lambda : A' \to A$ an asymptotic
  homomorphism.

It follows that any folding $\left(\varphi \circ
    \lambda\right)_f$ of the asymptotic extension $\varphi \circ \lambda$
  is semi-invertible.
\begin{proof} Let $\varphi' : A \to Q(B)$ be an extension such that
  $\varphi \oplus \varphi'$ is asymptotically split. By considering
  the composition product between $\lambda$ and an asymptotic lift of
  $\varphi \oplus \varphi'$ it follows that there is a re-parametrisation $s$ such that
  $\left(\varphi \circ \lambda^s\right)  \oplus \left(\varphi' \circ
    \lambda^s\right)$ is an asymptotic extension which is asymptotically split in the sense of
  \cite{MT2}. It follows therefore from Lemma 4.4 of \cite{MT2} that
  $\left(\varphi \circ \lambda^s\right)_f$ is semi-invertible for any
  folding $\left(\varphi \circ \lambda^s\right)_f$ of $\varphi \circ
  \lambda^s$. Since $\varphi \circ \lambda^s$ is strongly homotopic to
  $\varphi \circ \lambda$, it follows from Proposition
  \ref{removefold} that the same is true for any folding of $\varphi
  \circ \lambda$.
\end{proof}
\end{lemma}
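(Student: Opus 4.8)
The plan is to leverage the semi-invertibility of $\varphi$ to manufacture an asymptotic splitting, push it through $\lambda$ via the Connes--Higson composition product, and then discard the re-parametrisation that this product forces on us by appealing to the homotopy invariance of foldings established in \refprop{removefold}. First I would use that $\varphi$ is semi-invertible to choose an extension $\varphi' : A \to Q(B)$ for which $\varphi \oplus \varphi'$ is asymptotically split, and I would fix an asymptotic homomorphism $\pi : A \to M_2(M(B))$ with $q_B \circ \pi_t = \varphi \oplus \varphi'$ for all $t$. Since $B$ is stable I identify $M_2(B)$ with $B$, so that $\pi$ may be regarded as an asymptotic lift into $M(B)$ of a single extension $A \to Q(B)$.

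Next I would feed the pair $(\pi,\lambda)$ into the composition-product machinery of \reflemma{pair0}, taking $D = M(B)$. This supplies a re-parametrisation $s$ and, after the usual correction by a continuous section, an honest equi-continuous asymptotic homomorphism $\Phi : A' \to M(B)$ asymptotic to $\pi_t \circ \lambda_{s(t)}$. Applying $q_B$ shows that $\Phi$ lifts the asymptotic extension $(\varphi \circ \lambda^s) \oplus (\varphi' \circ \lambda^s)$, which is therefore asymptotically split in the sense of \cite{MT2}. Lemma~4.4 of \cite{MT2} then applies verbatim to $\varphi \circ \lambda^s$ and yields that every folding $(\varphi \circ \lambda^s)_f$ is semi-invertible.

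Finally I would undo the re-parametrisation. Linearly interpolating between the identity and $s$ produces a path of re-parametrisations, hence a strong homotopy $A' \to C[0,1] \otimes Q(B)$ joining $\varphi \circ \lambda$ to $\varphi \circ \lambda^s$. Since these two asymptotic extensions are strongly homotopic, \refprop{removefold} guarantees that a folding of the one is semi-invertible precisely when a folding of the other is. Combined with the previous step, this shows that any folding $(\varphi \circ \lambda)_f$ is semi-invertible, which is the assertion.

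The main obstacle is the second step. Although $\varphi \circ \lambda$ is automatically an asymptotic extension because $\varphi$ is a genuine $*$-homomorphism, the naive candidate $\pi \circ \lambda$ for the lift of the split structure need not be asymptotically multiplicative, so one cannot directly conclude that the sum is asymptotically split. It is exactly the re-parametrisation $s$ delivered by \reflemma{pair0} that restores asymptotic multiplicativity of the composite, at the cost of replacing $\lambda$ by $\lambda^s$; \refprop{removefold} is then the indispensable device for transporting the conclusion back to the original $\lambda$.
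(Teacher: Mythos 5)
Your proposal follows the paper's own proof essentially verbatim: choose an inverse $\varphi'$ so that $\varphi\oplus\varphi'$ is asymptotically split, use the composition-product machinery to find a re-parametrisation $s$ making $\left(\varphi\circ\lambda^s\right)\oplus\left(\varphi'\circ\lambda^s\right)$ an asymptotically split asymptotic extension, apply Lemma 4.4 of \cite{MT2}, and then transfer the conclusion from $\varphi\circ\lambda^s$ back to $\varphi\circ\lambda$ via strong homotopy and Proposition~\ref{removefold}. The only point to tidy up is that the Bartle--Graves-corrected $\Phi$ lifts $\left(\varphi\circ\lambda^s\right)\oplus\left(\varphi'\circ\lambda^s\right)$ only modulo terms vanishing at infinity, so one should instead take the uncorrected composite $\pi_t\circ\lambda_{s(t)}$ as the splitting (it lifts exactly, and the re-parametrisation together with equi-continuity makes it an asymptotic homomorphism), exactly as the paper's phrase ``the composition product between $\lambda$ and an asymptotic lift of $\varphi\oplus\varphi'$'' intends.
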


As in \cite{MT3},\cite{MT4} and \cite{MT5} we denote
$\Ext^{-\frac{1}{2}}(A,B)$ the group of semi-invertible extensions of
$A$ by $B$ modulo unitary equivalence and addition by asymptotically
split extensions. By combining Proposition \ref{removefold} and Lemma \ref{pair0} with
Lemma 4.5 of \cite{MT2} we get the desired pairing:

\begin{thm}\label{pairingthm} Let $A,A'$ be separable $C^*$-algebra, $B$
  stable and $\sigma$-unital. There is map a
$$
\star : \Ext^{-\frac{1}{2}}(A,B) \times [[A',A]]   \to
\Ext^{-\frac{1}{2}}(A',B)
$$
such that $[\varphi] \star [\lambda] =  \left[\left(\varphi \circ
    \lambda\right)_f\right]$ where $\left(\varphi \circ
    \lambda\right)_f$ is an arbitrary folding of the asymptotic extension
    $\varphi \circ \lambda$.
\end{thm}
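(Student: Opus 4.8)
The theorem asserts only the existence of the map, so the entire task is one of well-definedness: I must show that $[(\varphi\circ\lambda)_f]$ is a well-defined element of $\Ext^{-\frac12}(A',B)$ that depends neither on the folding data $f$, nor on the representative $\lambda$ of $[\lambda]\in[[A',A]]$, nor on the representative $\varphi$ of $[\varphi]\in\Ext^{-\frac12}(A,B)$. First I would observe that, since $\varphi$ is a genuine $*$-homomorphism and $\lambda$ an asymptotic homomorphism, $\varphi\circ\lambda$ is indeed an asymptotic extension of $A'$ by $B$, and that \reflemma{pair1} guarantees every folding $(\varphi\circ\lambda)_f$ is semi-invertible; hence $[(\varphi\circ\lambda)_f]$ is a bona fide element of $\Ext^{-\frac12}(A',B)$.

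The plan is to isolate the following intermediate fact, from which the first two independences are immediate: the class $[\eta_f]$ of a folding depends only on the strong homotopy class of the semi-invertible asymptotic extension $\eta$. To prove it, suppose $\eta$ and $\eta'$ are strongly homotopic and have semi-invertible foldings. I would choose an extension $\psi$ with $\eta'_{f'}\oplus\psi\oplus 0$ asymptotically split, so that $[\psi]=-[\eta'_{f'}]$, and then apply \refprop{removefold} with $\eta'$ in the role of $\varphi$ and $\eta$ in the role of $\varphi'$ to conclude that $\eta_f\oplus\psi\oplus 0$ is also asymptotically split; hence $[\eta_f]=-[\psi]=[\eta'_{f'}]$. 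Taking $\eta=\eta'=\varphi\circ\lambda$, which is trivially strongly homotopic to itself, settles independence of the folding data (this is also the special case $\psi=-[(\varphi\circ\lambda)_f]$ of \reflemma{crux2}). For independence of $\lambda$, a homotopy $\Lambda:A'\to C[0,1]\otimes A$ from $\lambda$ to $\lambda'$ yields, upon composing with the $*$-homomorphism $\id_{C[0,1]}\otimes\varphi$, an asymptotic homomorphism $(\id\otimes\varphi)\circ\Lambda:A'\to C[0,1]\otimes Q(B)$ satisfying $\ev_0\circ(\id\otimes\varphi)\circ\Lambda=\varphi\circ\lambda$ and $\ev_1\circ(\id\otimes\varphi)\circ\Lambda=\varphi\circ\lambda'$; this is a strong homotopy, so the intermediate fact gives $[(\varphi\circ\lambda)_f]=[(\varphi\circ\lambda')_{f'}]$.

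The hard part will be independence of the representative $\varphi$, because the equivalence defining $\Ext^{-\frac12}(A,B)$ is unitary equivalence together with addition of asymptotically split extensions, neither of which is a strong homotopy, so the intermediate fact does not apply. Suppose $\varphi\oplus\sigma$ is unitarily equivalent to $\varphi_1\oplus\sigma_1$ with $\sigma,\sigma_1$ asymptotically split. Composing with $\lambda$ produces a unitary equivalence of asymptotic extensions $(\varphi\circ\lambda)\oplus(\sigma\circ\lambda)\cong(\varphi_1\circ\lambda)\oplus(\sigma_1\circ\lambda)$, implemented by the constant unitary coming from the original one. I would then invoke Lemma 4.5 of \cite{MT2} for the two compatibilities of folding with the group structure: folding is additive up to the equivalence defining $\Ext^{-\frac12}$, and folding respects unitary equivalence — the latter because, choosing via \reflemma{deltaepsilon} a unit sequence that asymptotically commutes with the implementing unitary $q_B(w)$, the folding of $\Ad(q_B(w))\circ\eta$ becomes $\Ad(q_B(w))\circ\eta_f$ modulo $B$. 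It remains to see that $\sigma\circ\lambda$ contributes nothing: here I would use the composition product of \reflemma{pair0} to compose an asymptotic lift of the splitting of $\sigma$ with $\lambda$, exhibiting $\sigma\circ\lambda$ as an asymptotically split asymptotic extension, whose foldings are then trivial in $\Ext^{-\frac12}(A',B)$ by Lemma 4.4 of \cite{MT2}.

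Chaining these identities gives
$$
[(\varphi\circ\lambda)_f] = [((\varphi\circ\lambda)\oplus(\sigma\circ\lambda))_F] = [((\varphi_1\circ\lambda)\oplus(\sigma_1\circ\lambda))_{F'}] = [(\varphi_1\circ\lambda)_{f'}],
$$
which completes the verification. The genuinely delicate points, and precisely the reason both \reflemma{pair0} and Lemma 4.5 of \cite{MT2} are needed, lie entirely in this last argument: making rigorous that $\sigma\circ\lambda$ is asymptotically split (which requires the composition product, since $\lambda$ is only asymptotic) and that folding is simultaneously additive and natural under unitary equivalence modulo asymptotically split summands. Everything else reduces, through \refprop{removefold}, to the single statement that a folding class is a strong-homotopy invariant.
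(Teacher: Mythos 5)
Your proposal is correct and takes essentially the same route as the paper: the paper's (very terse) proof consists precisely of combining \refprop{removefold} (strong homotopy invariance of folding classes, giving independence of the folding data and of the representative of $[\lambda]$), \reflemma{pair0} (the composition product, used to dispose of the asymptotically split summands), and Lemma 4.5 of \cite{MT2} (compatibility of folding with the group structure), with \reflemma{pair1} supplying semi-invertibility of the foldings so that the map lands in $\Ext^{-\frac{1}{2}}(A',B)$. Your write-up is a fleshed-out version of exactly this combination, including the same reduction of the hard case (unitary equivalence plus asymptotically split summands) to reparametrised composition products and \cite{MT2}.
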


In the construction of the pairing $\star$ we have fixed the
strictly positive element $b \in B$ which we used to define the
unit sequences. It follows from Lemma 4.4 of \cite{MT2} that
$\star$ is independent of the choice of $b$. The freedom in the
choice of strictly positive element makes it easy to show
that
\begin{equation}\label{additive}
\left([\varphi] + [\psi]\right) \star [\lambda] = \left([\varphi]
  \star [\lambda] \right) + \left([\psi] \star
  [\lambda] \right) .
\end{equation}

As should be expected it is somewhat more tricky to establish the
natural associativity involving $\star$ and the composition
product $\bullet$ for asymptotic homomorphisms.

\begin{lemma}\label{compo} Let $A',A,B$ be $C^*$-algebras, $A',A$
  separable and $B$ stable and $\sigma$-unital. Let $\varphi : A \to Q(B)$ be an
  asymptotic extension and $\lambda : A' \to A$ an asymptotic
  homomorphism. There is a folding $\varphi_f$ of $\varphi$, a
  re-parametrisation $s$, a
  folding $\left(\varphi_f \circ \lambda^s\right)_{f'}$ of $\varphi_f \circ
  \lambda^s$ and an asymptotic extension $\mu : A'  \to Q(B)$ such that
\begin{enumerate}
\item[i)] $[\mu] = \left[\varphi\right] \bullet \left[\lambda\right]$ in
  $[[A',Q(B)]]$ where $\bullet$ denotes the composition product
  $\bullet : \ [[A',A]] \times [[A,Q(B)]] \to [[A',Q(B)]]$, and
\item[ii)] $\mu_{f''} = \left(\varphi_f \circ \lambda^s\right)_{f'}$ for
  some folding $\mu_{f''}$ of $\mu$.
\end{enumerate}
\begin{proof} Let $F_1 \subseteq
  F_2 \subseteq F_3 \subseteq \dots $ be a sequence of finite subsets
  with dense union in $A'$ and set $X = \bigcup_n F_n$. By construction of the composition product there is a
  parametrisation $s$  and an
  equi-continuous family of maps
$\kappa_{t,x} : A \to Q(B), t,x \in [1,\infty), t\geq x,$
such that i)-v) of Lemma \ref{pair0} hold.
The composition product $[\varphi]\bullet [\lambda]$ is
then represented by any asymptotic extension $\mu$ with the property
that $\lim_{t \to \infty} \mu_t(a) - \varphi_t \circ
\lambda_{s(r(t))}(a) = 0$ for all $a \in X$, where $r$ can be any
re-parametrisation  such that $r(t) \leq t$ for all $t$.

From the
Bartle-Graves selection theorem we get an equi-continuous family
of maps
 $\overline{\kappa}_{t,x} : A \to M(B), \ t,x \in [1,\infty), t\geq x$,
such that $q_B \circ \overline{\kappa}_{t,x} = \kappa_{t,x}$ for all
$t,x$. Let $f = \left(\overline{\varphi}, \{u_n\},
    \{t_n\}\right)$ be folding data for $\varphi$ which have a series
  of additional properties which we now describe.  For each $k \in \mathbb N$ there is a
  $\delta_k > 0$ such that
\begin{equation}\label{disccond1}
 \max\{|s-s'|, |t-t'|\} \leq \delta_k \
\Rightarrow \ \left\|\overline{\kappa}_{t,s}(a) -
    \overline{\kappa}_{t',s'}(a)\right\| \leq \frac{1}{k}
\end{equation}
when $s,s',t,t' \in [1,k+2]$ and $a \in F_k$. We shall require of the discretization $\{t_n\}$
that
\begin{equation}\label{disccond2}
\left|t_i - t_{i+1}\right|  \leq \delta_k
\end{equation}
when $t_i \leq k$. Concerning the unit sequence $\{u_n\}$ we will
require that
\begin{equation}\label{f}
\lim_{n \to \infty} \sup_{t,x \in [1,n+2]} \left[\left\|(1-u_n)f(t,x)\right\| -
\left\|q_B\left(f(t,x)\right)\right\| \right] = 0
\end{equation}
when $f$ is any of the $M(B)$-valued functions
\begin{itemize}
\item $f(t,x) = \overline{\kappa}_{t,x}(a) \overline{\kappa}_{t,x}(b) -
    \overline{\kappa}_{t,x}(ab)$,
\item  $f(t,x) = \overline{\kappa}_{t,x}(a^*) -
  \overline{\kappa}_{t,x}(a)^*$,
\item  $f(t,x) = \overline{\kappa}_{t,x}(a+\lambda b) -
  \overline{\kappa}_{t,x}(a) - \lambda\overline{\kappa}_{t,x}(b)$
\end{itemize}
for any $a,b \in A'$, $\lambda\in\mathbb C$, or
\begin{itemize}
\item\label{f1} $f(t,x) = \overline{\varphi}_t \circ \lambda_{s(x)}(a) -
    \overline{\kappa}_{t,x}(a)$
\end{itemize}
for any $a \in X$, and also that
\begin{equation}\label{hug12}
\lim_{n \to \infty} \sup_{t,x \in [1,n+2]} \left\| u_n
  \overline{\kappa}_{t,x}(a) -
  \overline{\kappa}_{t,x}(a) u_n\right\| = 0
\end{equation}
for all $a \in A'$.


 We choose next a discretization
  $\{t'_n\}$ of $[1, \infty)$ such that
\begin{equation}\label{help}
\lim_{n \to \infty} \sup_{v \in [1,\infty)} \sup_{t \in \left[t'_n,t'_{n+1}\right]} \left\|
  \overline{\varphi}_v \circ \lambda_{s(t)}(a) -
  \overline{\varphi}_v \circ \lambda_{s(t'_n)}(a) \right\| = 0
\end{equation}
for all $a \in X$. This is done as follows: Let $n \geq 2$. By
compactness of
$$
\left\{\lambda_{s(t)}(a): \
  t\in [1,n+2], a \in F_n\right\}
$$
and equi-continuity of
$\overline{\varphi}$ there is a $\delta > 0$ such that $\sup_{v \in
  [1,\infty)} \left\|\overline{\varphi}_v (y) -
  \overline{\varphi}_v(z) \right\| \leq \frac{1}{n}$ when $z,y \in \left\{\lambda_{s(t)}(a): \
  t\in [1,n+2], a \in F_n\right\}$ and $\left\|y-z\right\| \leq
\delta$. Choose then a $\delta' \in ]0,1]$ such that
$\left\|\lambda_{s(t)}(a) - \lambda_{s(t')}(a)\right\| \leq \delta$
when $t,t' \in [1,n+1], a \in F_n$ and $|t-t'| \leq \delta'$. Arrange that
$\left|t'_{i+1} - t'_i\right| \leq \delta'$ when $t_i \in [n,n+1]$. Then
(\ref{help}) holds.

Subsequently we choose folding data $f' =
  \left(\overline{\psi}, \{u'_n\}, \{t'_n\}\right)$ for $\varphi_f
  \circ \lambda^s$ with the additional properties that
\begin{equation}\label{help1}
\lim_{n \to \infty}  \sup_{t \in [1,n+2]}\left\|\Delta'_{n+j} \left[\sum_{k=0}^{\infty} \Delta_k
  \overline{\varphi}_{t_k} \circ \lambda_{s\left(t\right)}(a) \Delta_k
   - \overline{\psi}_{t}(a)\right]\right\| = 0
\end{equation}
and
\begin{equation}\label{help2}
\sum_{n =1}^{\infty} \sup_{t \in [1,n+2]} \left\| \Delta'_{n+j} \left[\sum_{k=0}^{\infty}
  \Delta_k\overline{\varphi}_{t_k} \circ \lambda_{s(t)}(a)\Delta_k\right]  -\left[\sum_{k=0}^{\infty}
  \Delta_k\overline{\varphi}_{t_k} \circ \lambda_{s(t)}(a) \Delta_k\right] \Delta'_{n+j}
\right\|  < \infty
\end{equation}
for all $j \in \{-1,0,1\}, a \in X$. It follows from (\ref{help1}) and
(\ref{help2}) that
\begin{equation}\label{interm}
\left(\varphi_f \circ \lambda\right)_{f'}(a) = q_B \left(\sum_{n=0}^{\infty}
   \left[\sum_{j=0}^{\infty} \Delta_j
  \overline{\varphi}_{t_j} \circ \lambda_{s\left(t'_n\right)}(a)
  \Delta_j\right] {\Delta'_n}^2\right).
\end{equation}
for all $a \in X$. As in the proof of Lemma \ref{crux2}
we can also require of $\{u'_n\}$ that there is a strictly increasing
function $m : \mathbb N \to \mathbb N$ such that $m(0) = 0$ and
\begin{enumerate}
\item[a40)] \label{enu51} $t_{m(n)} \geq t'_{n+1}$, $n \geq 1$,
\item[a41)] \label{enu52} $u'_n u_{m(n)} = u_{m(n)}$,
\item[a42)] \label{enu53} $u_{m(n)}u'_{n-1} = u'_{n-1}$, $n \geq 1$, and
\item[a43)] \label{enu54} $t'_n - t'_{n-1} \leq t_{m(n)} - t_{m(n-1)}$, $n \geq 1$.
\end{enumerate}
Define $r : [1,\infty) \to [1,\infty)$ such that $r$ is linear on
$\left[t_{m(n-1)}, t_{m(n)}\right]$, $r\left(t_{m(n-1)}\right) =
t'_{n-1}$ and $r\left(t_{m(n)}\right) =
t'_{n}$ for all $n$. It follows from a40) that $r(t) \leq t$
for all $t$ and from a43) that
\begin{equation}\label{Lip1}
\left|r(t)-r(t')\right| \leq
\left|t-t'\right|
\end{equation}
for all $t,t' \in [1,\infty)$. From a41) and a42) we deduce that
\begin{equation*}
\begin{split}
& \sum_{n=0}^{\infty}  \left[\sum_{j=0}^{\infty} \Delta_j
  \overline{\varphi}_{t_j} \circ \lambda_{s\left(t'_n\right)}(a)
  \Delta_j\right] {\Delta'_n}^2 = \\
& \sum_{n=1}^{\infty} \sum_{m(n-1) \leq k < m(n)} \sum_{j \in
  \{-1,0\}}  \Delta_k
\overline{\varphi}_{t_k} \circ
\lambda_{s\left(t'_{n+j}\right)}(a) \Delta_k  {\Delta'_{n+j}}^2 .
 \\
\end{split}
\end{equation*}
Now we combine
(\ref{help}) and Lemma 3.1 of \cite{MT2} to conclude that
\begin{equation*}
\begin{split}
&\left(\varphi_f \circ \lambda\right)_{f'}(a) = q_B \left(\sum_{n=1}^{\infty} \sum_{m(n-1) \leq k < m(n)} \sum_{j \in
  \{-1,0\}}  \Delta_k  \
\overline{\varphi}_{t_k} \circ
\lambda_{s\left(r(t_k)\right)}(a)  \Delta_k{\Delta'_{n+j}}^2\right) \\
& = q_B \left( \left[ \sum_{k=0}^{\infty}
  \Delta_k \overline{\varphi}_{t_k} \circ
\lambda_{s\left(r(t_k)\right)}(a) \Delta_k \right]\sum_{n=0}^{\infty} {\Delta'_n}^2  \right) \\
& = q_B \left(\sum_{k=0}^{\infty}
  \Delta_k \overline{\varphi}_{t_k} \circ
\lambda_{s\left(r(t_k)\right)}(a) \Delta_k  \right) \\
\end{split}
\end{equation*}
for all $a \in X$. It follows from (\ref{f}) that
\begin{equation}\label{stop}
\left(\varphi_f \circ \lambda\right)_{f'}(a) =
q_B\left(\sum_{k=0}^{\infty}
  \Delta_k\overline{\kappa}_{t_k,r(t_k)}(a)\Delta_k\right)
\end{equation}
for all $a \in X$. Now note that it follows from (\ref{disccond1}),
(\ref{disccond2}) and (\ref{Lip1}) that
$$
\lim_{k \to \infty} \sup_{t \in \left[t_k,t_{k+1}\right]}
\left\|\overline{\kappa}_{t_k,r\left(t_k\right)}(a) -
  \overline{\kappa}_{t,r(t)}(a)\right\| = 0
$$
for all $a \in A'$. Combining this with (\ref{f}) and (\ref{hug12}) we can
conclude that $f'' = \left(
  \overline{\kappa}_{t,r(t)}, \{u_n\}, \{t_n\}\right)$ is folding data
for the asymptotic extension $\left\{\kappa_{t,r(t)}\right\}_{t \in
  [1,\infty)}$. Since (\ref{stop}) implies that $\left(\varphi_f \circ
  \lambda\right)_{f'} = \mu_{f''}$, where $\mu_t = q_B \circ
\overline{\kappa}_{t,r(t)}$, this completes the proof.
\end{proof}
\end{lemma}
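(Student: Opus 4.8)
The plan is to realise both $(\varphi_f\circ\lambda^s)_{f'}$ and a suitable folding $\mu_{f''}$ of a representative $\mu$ of $[\varphi]\bullet[\lambda]$ as the \emph{same} average of the form $q_B\bigl(\sum_k\Delta_k X_k\Delta_k\bigr)$, evaluated on a subset $X\subseteq A'$ with dense span. First I would invoke Lemma \ref{pair0} to produce the re-parametrisation $s$ and an equi-continuous family $\kappa_{t,x}:A'\to Q(B)$ modelling the composition product, with $\kappa_{t,x}(a)$ asymptotically equal to $\varphi_t\circ\lambda_{s(x)}(a)$ on $X$; lifting by Bartle--Graves gives an equi-continuous $\overline{\kappa}_{t,x}:A'\to M(B)$. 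A representative of $[\varphi]\bullet[\lambda]$ is $\mu_t=q_B\circ\overline{\kappa}_{t,r(t)}$ for a re-parametrisation $r$ with $r(t)\le t$, and the target folding will be $f''=(\overline{\kappa}_{t,r(t)},\{u_n\},\{t_n\})$.

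Using Lemma \ref{uniform7} I may assume $\overline{\varphi}$ uniformly continuous. I would then choose the folding data $f=(\overline{\varphi},\{u_n\},\{t_n\})$ for $\varphi$ with the discretization $\{t_n\}$ fine enough that $\overline{\kappa}_{t,x}$ is nearly constant between consecutive nodes, and with the unit sequence $\{u_n\}$ satisfying the folding averaging condition not only for the $*$-homomorphism defects of $\overline{\kappa}$, but also for the defect $\overline{\varphi}_t\circ\lambda_{s(x)}(a)-\overline{\kappa}_{t,x}(a)$, while commuting asymptotically with $\overline{\kappa}$; all of these hold simultaneously by Lemma \ref{deltaepsilon} and the separability of $A'$. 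Next I would pick folding data $f'=(\overline{\psi},\{u'_n\},\{t'_n\})$ for $\varphi_f\circ\lambda^s$ so that, inside the $f'$-average, the lift $\overline{\psi}_t(a)$ may be replaced by the explicit expression $\sum_k\Delta_k\,\overline{\varphi}_{t_k}\circ\lambda_{s(t)}(a)\,\Delta_k$. This presents $(\varphi_f\circ\lambda^s)_{f'}(a)$ as the double average $q_B\bigl(\sum_n\bigl[\sum_j\Delta_j\overline{\varphi}_{t_j}\circ\lambda_{s(t'_n)}(a)\Delta_j\bigr]{\Delta'_n}^2\bigr)$.

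The technical heart is the collapse of this double average to a single one. Imitating the interlocking device of Lemma \ref{crux2}, I would relate $\{u'_n\}$ to $\{u_n\}$ through a strictly increasing $m:\mathbb N\to\mathbb N$ forcing $\Delta_k\Delta'_{n+j}$ to vanish outside a narrow band in $k$, and I would define a \emph{linear}, $1$-Lipschitz re-parametrisation $r$ with $r(t)\le t$ sending $t_{m(n)}$ to $t'_n$. Inserting the band structure, invoking $\sum_n{\Delta'_n}^2=1$ and Lemma 3.1 of \cite{MT2}, and freezing $\lambda_{s(t'_{n+j})}$ at $\lambda_{s(r(t_k))}$ via the near-constancy arranged above, collapses the double average to $q_B\bigl(\sum_k\Delta_k\overline{\varphi}_{t_k}\circ\lambda_{s(r(t_k))}(a)\Delta_k\bigr)$. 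The averaging condition for the defect $\overline{\varphi}_t\circ\lambda_{s(x)}-\overline{\kappa}_{t,x}$ then replaces this by $q_B\bigl(\sum_k\Delta_k\overline{\kappa}_{t_k,r(t_k)}(a)\Delta_k\bigr)$. Finally I would check, using the near-constancy of $\overline{\kappa}_{t,r(t)}$ on each $[t_k,t_{k+1}]$ and the averaging and commutation conditions on $\{u_n\}$, that $f''$ really is folding data for $\mu$ and that the collapsed expression equals $\mu_{f''}(a)$; thus $(\varphi_f\circ\lambda^s)_{f'}=\mu_{f''}$ on $X$, hence everywhere by density and continuity.

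I expect the principal obstacle to be the simultaneous bookkeeping of the three families of data $f$, $f'$ and the interlocking data $\{u'_n\}$ and $m$: they must be constructed in the correct order so that no later requirement spoils an earlier one, and one must verify that every error introduced by the rearrangement---the band truncation, the freezing of $\lambda$, and the passage from $\overline{\varphi}\circ\lambda$ to $\overline{\kappa}$---is summable or tends to zero. Here the orthogonality relations $\Delta_i\Delta_j=0$ for $|i-j|\ge2$ together with the Cauchy--Schwarz-type estimate of Lemma 3.1 of \cite{MT2} are the tools that keep these errors controllable.
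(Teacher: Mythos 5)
Your proposal follows essentially the same route as the paper's own proof: Lemma \ref{pair0} plus Bartle--Graves to get $\overline{\kappa}_{t,x}$, folding data $f$ for $\varphi$ enriched with near-constancy, quasi-commutation and defect-averaging conditions on $\overline{\kappa}$, folding data $f'$ chosen so that $(\varphi_f\circ\lambda^s)_{f'}$ becomes the double average of (\ref{interm}), the interlocking function $m$ and the linear $1$-Lipschitz re-parametrisation $r$ with $r(t)\le t$ borrowed from Lemma \ref{crux2} to collapse it to a single average, and finally the identification with $\mu_{f''}$ for $\mu_t=q_B\circ\overline{\kappa}_{t,r(t)}$ and $f''=(\overline{\kappa}_{t,r(t)},\{u_n\},\{t_n\})$. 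The steps you flag as the remaining bookkeeping (the band truncation, the freezing of $\lambda$ via a condition like (\ref{help}) on $\{t'_n\}$, and the passage from $\overline{\varphi}\circ\lambda$ to $\overline{\kappa}$) are exactly the estimates the paper carries out, so the plan is correct as it stands.
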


\begin{thm}\label{associativity} Let $A'',A',A$ be a separable
  $C^*$-algebras and $B$ a stable $\sigma$-unital $C^*$-algebra. Let
  $\nu : A'' \to A'$ and $\lambda : A' \to A$ asymptotic homomorphisms
  and $\varphi : A \to Q(B)$ a semi-invertible extension. Then
$$
\left([\varphi] \star [\lambda]\right) \star [\nu] = [\varphi] \star \left([\lambda]
  \bullet [\nu]\right)
$$
in $\Ext^{-\frac{1}{2}}(A'',B)$.
\begin{proof} Apply Lemma \ref{compo} with $\varphi \circ \lambda$ in
  the role of $\varphi$ and $\nu$ in the role of $\lambda$.
\end{proof}
\end{thm}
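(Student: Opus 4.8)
The plan is to unwind the author's one-line proof, so that all the analytic content stays inside Lemma~\ref{compo} and what remains is to match its output against the two sides of the asserted identity. First I would apply Lemma~\ref{compo} with the asymptotic extension $\varphi\circ\lambda : A'\to Q(B)$ in the role of $\varphi$ and with $\nu : A''\to A'$ in the role of $\lambda$. Writing $\Psi := (\varphi\circ\lambda)_f$ for the folding it produces, which is a genuine extension of $A'$ by $B$ and is semi-invertible by Lemma~\ref{pair1}, the lemma supplies a re-parametrisation $s$, a folding $(\Psi\circ\nu^s)_{f'}$ of $\Psi\circ\nu^s$, and an asymptotic extension $\mu : A''\to Q(B)$ such that
\begin{equation*}
[\mu] = [\varphi\circ\lambda]\bullet[\nu] \quad\text{in } [[A'',Q(B)]]
\end{equation*}
and $\mu_{f''} = (\Psi\circ\nu^s)_{f'}$ for some folding $\mu_{f''}$ of $\mu$. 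The whole proof then amounts to showing that both sides of the theorem equal $[\mu_{f''}]$.

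Next I would evaluate the left-hand side. By the definition of the pairing in Theorem~\ref{pairingthm}, $[\varphi]\star[\lambda] = [\Psi]$, so $([\varphi]\star[\lambda])\star[\nu] = [\Psi]\star[\nu]$. Since re-parametrisation does not change the class in $[[A'',A']]$ we have $[\nu]=[\nu^s]$ and hence $[\Psi]\star[\nu] = [\Psi]\star[\nu^s]$. Because $\star$ is independent of the choice of folding data (Lemma~\ref{crux2} and Proposition~\ref{removefold}), we may compute $[\Psi]\star[\nu^s] = [(\Psi\circ\nu^s)_{f'}]$, and then part (ii) of Lemma~\ref{compo} identifies this with $[\mu_{f''}]$. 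Thus the left-hand side equals $[\mu_{f''}]$.

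For the right-hand side I would use two standard properties of the composition product $\bullet$. Because $\varphi$ is a genuine $*$-homomorphism, the construction following Lemma~\ref{pair0} gives $[\varphi]\bullet[\chi] = [\varphi\circ\chi]$ for any asymptotic homomorphism $\chi$ into $A$ (the re-parametrisation built into $\bullet$ alters $\varphi\circ\chi$ only by a re-parametrisation, hence not its homotopy class). Combining this with the associativity of $\bullet$ in the Connes--Higson homotopy category, \cite{CH}, and writing $\rho$ for a representative of $[\lambda]\bullet[\nu]\in[[A'',A]]$, I obtain
\begin{equation*}
[\mu] = [\varphi\circ\lambda]\bullet[\nu] = \bigl([\varphi]\bullet[\lambda]\bigr)\bullet[\nu] = [\varphi]\bullet\bigl([\lambda]\bullet[\nu]\bigr) = [\varphi\circ\rho].
\end{equation*}
Hence $\mu$ is strongly homotopic to $\varphi\circ\rho$, and a final appeal to Proposition~\ref{removefold} yields $[\varphi]\star([\lambda]\bullet[\nu]) = [(\varphi\circ\rho)_h] = [\mu_{f''}]$ for any folding $h$ of $\varphi\circ\rho$. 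Both sides therefore equal $[\mu_{f''}]$, which proves the theorem.

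The genuinely hard work is concentrated in Lemma~\ref{compo}, which already manufactures the common folding $\mu_{f''}$ relating a folding of $\Psi\circ\nu^s$ to a folding of the composition product; in the present argument the points to watch are purely organisational: keeping the several re-parametrisations (the $s$ of Lemma~\ref{compo}, the one implicit in $\bullet$, and those internal to $\star$) from interfering, and invoking at the right moments that both $\star$ and $\bullet$ descend to homotopy classes so that Proposition~\ref{removefold} applies. The only external input is the associativity of the Connes--Higson product together with the simplification $[\varphi]\bullet[\chi]=[\varphi\circ\chi]$ for genuine $\varphi$, which I would take as established.
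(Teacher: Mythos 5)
Your proposal is correct and follows essentially the same route as the paper: the paper's one-line proof is precisely the application of Lemma~\ref{compo} with $\varphi\circ\lambda$ and $\nu$, and your write-up simply makes explicit the bookkeeping the authors leave implicit (well-definedness of $\star$ under change of folding data and strong homotopy via Proposition~\ref{removefold}, the identity $[\varphi]\bullet[\chi]=[\varphi\circ\chi]$ for a genuine $*$-homomorphism $\varphi$, and associativity of the Connes--Higson product from \cite{CH}). Both sides are correctly reduced to the class $[\mu_{f''}]$ supplied by Lemma~\ref{compo}, so nothing is missing.
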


\section{Semi-invertibility}

\begin{lemma}\label{compo2} Let $A',A,B$ be $C^*$-algebras, $A',A$
  separable and $B$ stable and $\sigma$-unital. Let $\varphi : A \to Q(B)$ be an
  asymptotic extension and $\lambda : A' \to A$ an asymptotic
  homomorphism. Let $\nu : A' \to Q(B)$ be an asymptotic extension
  such that $[\nu] = [\varphi] \bullet [\lambda]$ in
  $[[A',Q(B)]]$. Assume that $\varphi_g$ is semi-invertible for some
  folding $\varphi_g$ of $\varphi$.

Then $\nu_{g'}$ is semi-invertible for every folding $\nu_{g'}$ of
$\nu$.
\begin{proof} Let $\varphi_f$, $s$, $\mu$ and $f''$ be as in Lemma
  \ref{compo}. By assumption $\varphi_g$ is semi-invertible for some
  folding $\varphi_g$ of $\varphi$ and it follows then from Proposition
  \ref{removefold} that also
  $\varphi_f$ is semi-invertible. Thus $\mu_{f''}$ is semi-invertible
  by Lemma \ref{compo}. Since $\mu$ is strongly homotopic to $\nu$ it
  follows from Proposition \ref{removefold} that $\nu_{g'}$ is
  semi-invertible for any folding of $\nu$.
\end{proof}
\end{lemma}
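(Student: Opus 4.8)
The plan is to reduce the statement to the concrete folding of $\varphi_f \circ \lambda^s$ furnished by Lemma \ref{compo}, and then to move semi-invertibility back and forth using Proposition \ref{removefold} (which is insensitive both to the choice of folding data and to passage between strongly homotopic asymptotic extensions) together with Lemma \ref{pair1} (which propagates semi-invertibility through composition with an asymptotic homomorphism followed by folding). First I would apply Lemma \ref{compo} to the pair $(\varphi,\lambda)$ to fix a folding $\varphi_f$ of $\varphi$, a re-parametrisation $s$, folding data $f'$ and $f''$, and an asymptotic extension $\mu : A' \to Q(B)$ with $[\mu] = [\varphi] \bullet [\lambda]$ and $\mu_{f''} = (\varphi_f \circ \lambda^s)_{f'}$.

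The first move is to upgrade the hypothesis from the given folding $\varphi_g$ to the specific folding $\varphi_f$. Since $\varphi_g$ and $\varphi_f$ are both foldings of the single asymptotic extension $\varphi$, which is trivially strongly homotopic to itself, Proposition \ref{removefold} applies with $g$ and $f$ as the two sets of folding data: as $\varphi_g$ is semi-invertible, i.e. $\varphi_g \oplus \psi \oplus 0$ is asymptotically split for some extension $\psi$, the same conclusion holds with $\varphi_f$ in place of $\varphi_g$. Thus $\varphi_f$ is a semi-invertible genuine extension, and Lemma \ref{pair1}, applied to this extension and to the asymptotic homomorphism $\lambda^s$, shows that every folding of $\varphi_f \circ \lambda^s$ is semi-invertible; in particular $\mu_{f''} = (\varphi_f \circ \lambda^s)_{f'}$ is semi-invertible.

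It remains to transport this to an arbitrary folding $\nu_{g'}$ of $\nu$. By hypothesis $[\nu] = [\varphi] \bullet [\lambda] = [\mu]$ in $[[A',Q(B)]]$, so $\mu$ and $\nu$ represent the same class and are therefore strongly homotopic. A second application of Proposition \ref{removefold}, now with $\mu$ and $\nu$ as the two strongly homotopic extensions and $f''$ and the given $g'$ as their respective folding data, carries the splitting of $\mu_{f''} \oplus \psi \oplus 0$ over to $\nu_{g'} \oplus \psi \oplus 0$, whence $\nu_{g'}$ is semi-invertible. Since $g'$ was arbitrary, this proves the lemma.

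I do not anticipate a genuine obstacle here: no new estimates are required, and the entire argument is a bookkeeping exercise chaining three already-proven results. The only point that needs care is keeping the statements uniformly in the form ``$\tau \oplus \psi \oplus 0$ is asymptotically split'' demanded by Proposition \ref{removefold}, which is harmless because adjoining the zero extension to an asymptotically split extension leaves it asymptotically split. All the real work has already been absorbed into Lemma \ref{compo}, where the composition product is identified with a concrete folding of $\varphi_f \circ \lambda^s$.
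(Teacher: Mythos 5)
Your proposal is correct and follows essentially the same route as the paper's proof: Lemma \ref{compo} to produce $\varphi_f$, $s$, $\mu$ and $f''$, Proposition \ref{removefold} to pass from $\varphi_g$ to $\varphi_f$, semi-invertibility of $\mu_{f''} = (\varphi_f \circ \lambda^s)_{f'}$, and a final application of Proposition \ref{removefold} using the strong homotopy between $\mu$ and $\nu$. If anything, your write-up is slightly more explicit than the paper's, which attributes the semi-invertibility of $\mu_{f''}$ simply to ``Lemma \ref{compo}'' whereas you correctly identify that this step also needs Lemma \ref{pair1} applied to the semi-invertible extension $\varphi_f$ and the asymptotic homomorphism $\lambda^s$.
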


With the following definition we try to cover the most general result
about automatic semi-invertibility which can be obtained from the pairing of
$\Ext^{-1/2}$ with asymptotic homomorphisms. It is inspired by three
sources. One is the paper by Dadarlat and Loring on unsuspended
E-theory, \cite{DL}, where homotopy symmetric $C^*$-algebras are
introduced. Another is the paper \cite{V} of
Voiculescu where the notion of homotopy domination is introduced and
the third is the work of Dadarlat \cite{D} where it is shown that shape-equivalence of
separable $C^*$-algebras is the same thing as equivalence in the asymptotic
homotopy category of Connes and Higson.

\begin{defn}
Let $A$ and $A'$ be $C^*$-algebras. Following the notation of
\cite{DL} we denote by $\left[\id_A\right]$ the element of
$[[A,A\otimes \mathbb K]]$ represented by the $*$-homomorphism $s(a) =
a \otimes e$ for some minimal non-zero projection $e$ in $\mathbb K$.
We say that $A$ is
\emph{homotopy symmetric relative to} $A'$ when there are asymptotic homomorphisms
$\lambda : A \to A'$, $\mu : A' \to A\otimes \mathbb K$ and $\psi: A
\to A \otimes \mathbb K$ such that
$$
\left[\id_A\right] + \left[\psi\right] = [\mu] \bullet [\lambda]
$$ in
$[[A,A\otimes \mathbb K]]$. When $\psi$ can be taken to be zero, we
say that $A$ is \emph{shape dominated} by $A'$.
\end{defn}

Thus $A$ is homotopy symmetric in the sense of Dadarlat and Loring if
and only if it is homotopy symmetric relative to $0$, and shape
domination generalises homotopy domination in the sense of Voiculescu.

\begin{thm}\label{THM} Let $A',A,B$ be $C^*$-algebras, $A',A$
  separable and $B$ stable and $\sigma$-unital. Assume that $A$ is
  homotopy symmetric relative to $A'$ and that all extensions of $A'$
  by $B$ are semi-invertible.

  It follows that all extensions of $A$ by $B$ are semi-invertible.

\begin{proof} By Lemma 4.3 of \cite{MT5} it suffices to show that all
  extensions of $A \otimes \mathbb K$ by $B$ are semi-invertible,
  i.e. we may assume that $A$ is stable. Then our assumptions imply
  that there are asymptotic homomorphisms $\lambda : A \to A'\otimes
  \mathbb K$, $\mu : A' \otimes \mathbb K \to A$ and $\psi : A \to A$
  such that
$\left[\varphi \oplus (\varphi \circ \psi)\right] = [\varphi \circ \mu] \bullet [\lambda]$ in $[[A,Q(B)]]$ for any
extension $\varphi : A \to Q(B)$. By assumption any folding of $\varphi \circ \mu$ is semi-invertible and
hence Lemma \ref{compo2} implies that $\varphi \oplus \left(\varphi
  \circ \mu\right)_f$ is semi-invertible for any folding $\left(\varphi
  \circ \mu\right)_f$ of $\varphi \circ \mu$. It follows that
$\varphi$ is semi-invertible.
\end{proof}
\end{thm}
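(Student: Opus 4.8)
The plan is to reduce to the stable case and then turn the homotopy-symmetry relation into an identity in $[[A,Q(B)]]$ to which Lemma \ref{compo2} applies. First I would invoke Lemma 4.3 of \cite{MT5} to reduce to the situation where $A$ is stable, so that we may identify $A\otimes\mathbb K$ with $A$; it then suffices to show that an arbitrary extension $\varphi : A\to Q(B)$ is semi-invertible. Tensoring the defining relation $[\id_A]+[\psi]=[\mu]\bullet[\lambda]$ with $\mathbb K$ and using stability of $A$, I obtain asymptotic homomorphisms $\lambda : A\to A'\otimes\mathbb K$, $\mu : A'\otimes\mathbb K\to A$ and $\psi : A\to A$ with $[\id_A]+[\psi]=[\mu]\bullet[\lambda]$ in $[[A,A]]$, all compositions now type-checking against the fixed $\varphi$.

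The key algebraic step is to compose this relation with $\varphi$ on the outside. Applying $[\varphi]\bullet(\cdot)$ and using additivity of the composition product in its right-hand variable together with associativity (both standard properties of the Connes--Higson product, \cite{CH}), the left-hand side becomes $[\varphi]+[\varphi\circ\psi]$, while the right-hand side becomes $([\varphi]\bullet[\mu])\bullet[\lambda]=[\varphi\circ\mu]\bullet[\lambda]$. Thus
$$\left[\varphi\oplus(\varphi\circ\psi)\right]=[\varphi\circ\mu]\bullet[\lambda]$$
in $[[A,Q(B)]]$. I would then let $\nu$ be any asymptotic extension representing this common class and observe that the factor $\varphi\circ\mu$ is an asymptotic extension of $A'$ by $B$, so that by the folding construction every $(\varphi\circ\mu)_f$ is a genuine extension of $A'$ by $B$, hence semi-invertible by the standing hypothesis on $A'$.

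With the hypotheses of Lemma \ref{compo2} in place --- source extension $\varphi\circ\mu$ on $A'$, the asymptotic homomorphism $\lambda : A\to A'$, and $\nu$ on $A$ with $[\nu]=[\varphi\circ\mu]\bullet[\lambda]$ --- that lemma yields that every folding of $\nu$ is semi-invertible. Choosing the representative $\nu=\varphi\oplus(\varphi\circ\psi)$, for which a folding is $\varphi\oplus(\varphi\circ\psi)_f$ (folding leaves the genuine summand $\varphi$ untouched), I conclude that $\varphi\oplus(\varphi\circ\psi)_f$ is semi-invertible. Finally, semi-invertibility of this direct sum provides a genuine extension $\rho$ with $\varphi\oplus\big[(\varphi\circ\psi)_f\oplus\rho\big]$ asymptotically split, so $(\varphi\circ\psi)_f\oplus\rho$ is a semi-inverse of $\varphi$, and $\varphi$ is semi-invertible; as $\varphi$ was arbitrary, the reduction step finishes the proof.

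I expect the genuinely delicate points to be bookkeeping rather than analysis: tracking the stabilizations so that each composition typechecks after identifying $A\otimes\mathbb K$ with $A$, and confirming that the common class above is indeed represented by $\varphi\oplus(\varphi\circ\psi)$ with the expected folding. All of the hard analytic content --- that foldings exist, are independent of the folding data up to asymptotic splitness, and interact correctly with the composition product --- has already been absorbed into Proposition \ref{removefold} and Lemmas \ref{compo} and \ref{compo2}, so that the proof of the theorem itself is essentially a formal manipulation erected on top of them.
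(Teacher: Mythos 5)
Your proposal is correct and follows essentially the same route as the paper's own proof: reduce to the stable case via Lemma 4.3 of \cite{MT5}, compose the homotopy-symmetry relation with $[\varphi]$ to obtain $[\varphi\oplus(\varphi\circ\psi)]=[\varphi\circ\mu]\bullet[\lambda]$ in $[[A,Q(B)]]$, and then apply Lemma \ref{compo2} together with the hypothesis on $A'$. You merely spell out the formal manipulations the paper glosses over, and in doing so you implicitly correct a typo in the paper's proof, which writes $\varphi\oplus\left(\varphi\circ\mu\right)_f$ where, as in your version, it should read $\varphi\oplus\left(\varphi\circ\psi\right)_f$.
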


\section{Relation to E-theory}

Recall that the $E$-theory of Connes and Higson, \cite{CH}, depends on
a fundamental construction, \emph{the Connes-Higson construction},
which produces asymptotic homomorphisms out of extensions. Since the
asymptotic homomorphism obtained from an asymptotically split extension is
homotopic to $0$, the Connes-Higson construction gives rise to a group homomorphism
\begin{equation}\label{CH??}
CH : \ \Ext^{-\frac{1}{2}}(A,B) \to [[SA,B]] .
\end{equation}
As shown in \cite{DL} the group $[[SA,B]]$ is isomorphic to the
$E$-theory group $E(A,SB)$. Thus $CH$ gives a direct relation between
$\Ext^{-1/2}$ and $E$-theory. It is unknown if $CH$ is always an
isomorphism, but we can now show that it is when $A$ is shape
dominated by another $C^*$-algebra $A'$, for example a nuclear
$C^*$-algebra, for which $CH : \Ext^{-1/2}(A',B) \to [[SA',B]]$ \emph{is} an isomorphism.

\begin{thm}\label{chcommutative} Let $A',A,B$ be separable $C^*$-algebras, $B$ stable. Let $\varphi : A \to
  Q(B)$ be a semi-invertible extension and $\lambda: A' \to A$ an
  asymptotic homomorphism. It follows that
$$
CH\left([\varphi] \star [\lambda]\right) = CH[\varphi] \bullet
[S\lambda]
$$
in $[[SA',B]]$, where $S\lambda : SA' \to SA$ is the suspension of $\lambda$.
\end{thm}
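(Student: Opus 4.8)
The plan is to unwind both sides into explicit asymptotic homomorphisms $SA' \to B$ and then, using the description of the composition product $\bullet$ given just after Lemma \ref{pair0}, to show that they are asymptotically equal along a common reparametrisation. First I would record that by Theorem \ref{pairingthm} the left-hand side equals $CH\big[(\varphi\circ\lambda)_f\big]$ for a folding $(\varphi\circ\lambda)_f$ of the asymptotic extension $\varphi\circ\lambda$, and that since $\varphi$ is a genuine $*$-homomorphism the class $[\varphi\circ\lambda]$ in $[[A',Q(B)]]$ is exactly the composition product $[\varphi]\bullet[\lambda]$ (the outer reparametrisation of Lemma \ref{pair0} is trivial when the outer factor is a $*$-homomorphism). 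Thus the statement reduces to a compatibility of the Connes--Higson construction with the composition product: writing $\mu = \varphi\circ\lambda$, I must show $CH[\mu_f] = CH[\varphi]\bullet[S\lambda]$.

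Next I would fix convenient representatives. Choose an equi-continuous lift $\overline{\varphi} : A \to M(B)$ of $\varphi$; then $\overline{\varphi}\circ\lambda_t$ is a lift of $\varphi\circ\lambda$, and with folding data $f = (\overline{\varphi}\circ\lambda, \{u_n\}, \{t_n\})$ the folded lift is $\sum_j \Delta_j \overline{\varphi}(\lambda_{t_j}(a'))\Delta_j$. For the Connes--Higson construction of $\mu_f$ I would use the representative $g\otimes a' \mapsto g(e_\sigma)\sum_j \Delta_j\overline{\varphi}(\lambda_{t_j}(a'))\Delta_j$ for $g \in C_0(0,1)$, where the quasicentral approximate unit $e_\sigma$ is chosen inside the $C^*$-subalgebra generated by the strictly positive element $b$ and interpolating the unit sequence $\{u_n\}$, so that $e_\sigma$ commutes with every $\Delta_j$ while still asymptotically commuting with the lift. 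On the other side, $CH[\varphi]$ is represented by $g\otimes a \mapsto g(v_t)\overline{\varphi}(a)$, and by the description of $\bullet$ following Lemma \ref{pair0} the composition $CH[\varphi]\bullet[S\lambda]$ is represented by $g\otimes a'\mapsto g(v_t)\overline{\varphi}(\lambda_{s(t)}(a'))$ for a suitable reparametrisation $s$ with $s(t)\le t$.

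The heart of the argument is to match these two formulas. Because $g\in C_0(0,1)$ vanishes at $1$ and $e_\sigma$ interpolates the $u_n$, the factor $g(e_\sigma)$ localises the sum $\sum_j \Delta_j(\cdots)\Delta_j$ to a window of indices $j$ near some $j(\sigma)$; since $t_{j+1}-t_j \to 0$ and $\lambda$ is equi-continuous, on this window $\lambda_{t_j}(a') \approx \lambda_{t_{j(\sigma)}}(a')$, so the inner time collapses to a single value. Using that $e_\sigma$ commutes with the $\Delta_j$, that $\sum_j\Delta_j^2 = 1$ strictly, and the compatibility of $\{u_n\}$ with the lift (condition (\ref{compatible1})), the folded expression then reduces modulo $B$ to $g(e_\sigma)\overline{\varphi}(\lambda_{t_{j(\sigma)}}(a'))$. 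Identifying $e_\sigma$ with $v_t$ and $t_{j(\sigma)}$ with $s(t)$ under a reparametrisation $\sigma\mapsto t$ exhibits $CH[\mu_f]$ and $CH[\varphi]\bullet[S\lambda]$ as asymptotically equal on a dense set, which by the characterisation of $\bullet$ forces equality of their classes in $[[SA',B]]$.

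I expect the main obstacle to be precisely this localisation-and-collapse step together with the three-fold time bookkeeping: one must choose the Connes--Higson approximate unit $e_\sigma$ simultaneously as a function of $b$ (so it commutes with the $\Delta_j$), quasicentral with respect to the whole lift $\overline{\varphi}\circ\lambda$, and with transition windows synchronised with the discretisation $\{t_n\}$ so that $j(\sigma)$ tracks a genuine reparametrisation of the inner $\lambda$-time. Verifying that these requirements can be met at once, and that the resulting error terms, controlled through Lemma 3.1 of \cite{MT2} and the folding compatibility conditions, tend to zero, is the delicate part; any residual ambiguity in the choice of folding data or reparametrisation should be absorbed by the homotopy invariance already established in Proposition \ref{removefold}.
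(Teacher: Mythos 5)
Your proposal follows essentially the same route as the paper's proof: there, $CH([\varphi]\star[\lambda])$ is likewise represented by $g\otimes a\mapsto g(u_t)\bigl(\sum_n\Delta_n\psi_{r(n)}(a)\Delta_n\bigr)$ with $u_t$ the linear interpolation of the unit sequence, the sum is localised to a window of indices near $t$, the inner times are collapsed, quasicentrality merges the $\Delta_j$'s to give $g(u_t)\psi_{r(t)}(a)\sim g(u_t)\overline{\varphi}(\lambda_{r(t)}(a))\sim\Phi_t(g\otimes a)$, and the synchronisation you flag as delicate is resolved exactly by exploiting the freedom in the folding data, namely by taking the discretization to be $\{r(n)\}$ for a reparametrisation $r$ satisfying the simultaneous conditions a44)--a47). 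One correction to your justification: the time-collapse within the window does not follow from equi-continuity of $\lambda$ (which controls the $A'$-variable uniformly in $t$, not the variation in $t$), but from the compatibility condition (\ref{uniccomp}) built into the definition of folding data, equivalently from uniform continuity in $t$, which the paper arranges by applying a common reparametrisation to the lift $\psi$ and to $\lambda$.
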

\begin{proof} We refer to \cite{CH} for the description of the
  Connes-Higson construction we shall use here. Let $\psi$ be a lift
  of $\varphi \circ \lambda$. Applying the same re-parametrisation to both
  $\psi$ and $\lambda$ we can arrange that $\psi$ is uniformly
  continuous, and still have that $q_B \circ \psi_t = \varphi
  \circ \lambda_t$ for all $t$, cf. Lemma \ref{uniform7}. Let $f = \left(\psi,\{u_n\},\{t_n\}\right)$ be folding
  data defining the folding $\left(\varphi \circ
    \lambda\right)_f$, and let $\overline{\varphi} : A \to M(B)$ be a
  continuous lift of $\varphi$.  Let $R$ be a countable dense subset of
  $C_0(0,1)$ and $X$ a countable dense subset of $A'$. Define $u_t \in B$, $t \in [n,n+1]$,
  such that $u_t = (t-n)u_{n+1} + (n+1-t)u_n$. Let $r$ be a re-parametrisation of
  $[1,\infty)$ such that
\begin{enumerate}
\item[a44)] \label{F1} $r(t) \leq t$ for all $t$,
\item[a45)] \label{F2} $\lim_{n \to \infty} r(n+1) -r(n) = 0$,
\item[a46)] \label{F3} $\lim_{t \to \infty} (1-u_t) \left( \psi_{r(t)}(a) -
    \overline{\varphi}\circ \lambda_{r(t)}(a)\right) = 0$ for all $a
  \in X$, and
\item[a47)] \label{F4} $\left[CH(\varphi) \bullet (S\lambda)\right]$ is represented
  in $[[SA',B]]$ by an asymptotic homomorphism $\Phi : SA' \to B$ such
  that $\lim_{t \to \infty} g(u_t)
  \overline{\varphi}\left(\lambda_{r(t)}(a)\right) - \Phi_t(g \otimes a) = 0$ for all $g \in
  R$ and all $a \in X$.
\end{enumerate}
It follows from a44) and a45) that $f' = \left(\psi, \{u_n\}, \left\{r(n)\right\}\right)$ is folding
data for $\varphi \circ \lambda$ since $\left(\psi, \{u_n\},
  \left\{t_n\right\}\right)$ is. If we let $A(t)
\sim B(t)$ mean that $\lim_{t \to \infty} A(t) -B(t) = 0$, we have for
any $g \in R$ and $a \in X$ that
\begin{equation*}
\begin{split}
& g(u_t)\left(\sum_{n=0}^{\infty} \Delta_n \psi_{r(n)}(a)\Delta_n
\right) = \sum_{j=n-4}^{n+4} g(u_t)\Delta_j \psi_{r(j)}(a)\Delta_j \ \
\ \ \ \text{where $t \in [n,n+1]$}\\
 & \ \ \ \ \ \ \ \ \ \ \ \ \  \ \ \ \ \ \ \ \ \ \ \ \ \ \ \ \ \ \ \ \ \
 \ \ \ \ \ \ \ \ \ \ \ \ \ \ \ \ \ \ \ \ \ \ \ \text{(by a1) and the definition of
   $\{u_t\}$)}
\end{split}
\end{equation*}
\begin{equation*}
\begin{split}
& \sim \sum_{j=n-4}^{n+4} g(u_t)\Delta_j \psi_{r(t)}(a)\Delta_j\\
& \ \ \ \ \ \ \ \ \ \ \ \ \  \ \ \ \ \ \ \ \ \ \ \ \ \ \ \ \ \ \ \ \ \
 \ \ \ \ \ \ \ \ \ \ \ \ \text{(using a45) and the uniform continuity of $\psi$)}\\
& \sim \sum_{j=n-4}^{n+4} g(u_t)\Delta_j^2 \psi_{r(t)}(a) =
g(u_t)\psi_{r(t)}(a)\\
&\ \ \ \ \ \ \ \ \ \ \ \ \  \ \ \ \ \ \ \ \ \ \ \ \ \ \ \ \ \ \ \ \ \
 \ \ \ \ \ \ \ \ \ \text{(thanks to a44) and the properties of folding data)}\\
&  \sim g(u_t) \overline{\varphi}\left(\lambda_{r(t)}(a)\right) \\
& \ \ \ \ \ \ \ \ \ \ \ \ \  \ \ \ \ \ \ \ \ \ \ \ \ \ \ \ \ \ \ \ \ \
 \ \ \ \ \ \ \ \ \ \text{(thanks to a46))}\\
&\sim
\Phi_t(g \otimes a) \\
& \ \ \ \ \ \ \ \ \ \ \ \ \  \ \ \ \ \ \ \ \ \ \ \ \ \ \ \ \ \ \ \ \ \
 \ \ \ \ \ \ \ \ \ \text{(thanks to a47)).}
\end{split}
\end{equation*}
It follows that $CH\left([\varphi] \star [\lambda]\right)$ is
represented by an asymptotic homomorphism which asymptotically
agrees with $\Phi$.
\end{proof}

\begin{cor}\label{Eth} Let $A',A,B$ be $C^*$-algebras, $A',A$
  separable and $B$ stable and $\sigma$-unital. Assume that $CH :
  \Ext^{-\frac{1}{2}}(A',B) \to [[SA',B]]$ is an isomorphism, and assume
  that $A$ is shape dominated by $A'$. It follows that $CH :
  \Ext^{-\frac{1}{2}}(A,B) \to [[SA,B]]$ is an isomorphism.
\end{cor}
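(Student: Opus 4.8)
The plan is to view Corollary \ref{Eth} as an instance of a purely formal principle: a natural transformation between two functors that is an isomorphism on an object is automatically an isomorphism on every retract of that object. Here shape domination of $A$ by $A'$ says exactly that $A$ is a retract of $A'$ in the asymptotic homotopy category; the pairing $\star$ and the composition product $\bullet$ make $\Ext^{-\frac{1}{2}}(-,B)$ and $[[S-,B]]$ into contravariant functors on that category; and Theorem \ref{chcommutative} is precisely the statement that $CH$ is natural. As in the proof of Theorem \ref{THM} I would first reduce to the case that $A$ is stable: the embedding $\iota : A \to A\otimes\mathbb K$ induces $\star[\iota] : \Ext^{-\frac{1}{2}}(A\otimes\mathbb K,B) \to \Ext^{-\frac{1}{2}}(A,B)$, an isomorphism by Lemma 4.3 of \cite{MT5}, and $\bullet[S\iota] : [[S(A\otimes\mathbb K),B]] \to [[SA,B]]$, an isomorphism by stability of $E$-theory, and these intertwine the two copies of $CH$ by Theorem \ref{chcommutative}; hence $CH$ is an isomorphism for $A$ if and only if it is for $A\otimes\mathbb K$. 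Since shape domination is preserved under this stabilisation, we may assume $A$ is stable and thus that the domination data consists of asymptotic homomorphisms $\lambda : A \to A'$ and $\mu : A' \to A$ with $[\mu]\bullet[\lambda] = [\id_A]$ in $[[A,A]]$.

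Next I would assemble the functorial data. Pairing with $[\mu]\in[[A',A]]$ and with $[\lambda]\in[[A,A']]$ yields group homomorphisms
$$
\star[\mu] : \Ext^{-\frac{1}{2}}(A,B) \to \Ext^{-\frac{1}{2}}(A',B), \qquad \star[\lambda] : \Ext^{-\frac{1}{2}}(A',B) \to \Ext^{-\frac{1}{2}}(A,B),
$$
and by Theorem \ref{associativity} together with the unit law $[\varphi]\star[\id_A] = [\varphi]$ their composite is $\star[\lambda]\circ\star[\mu] = \star([\mu]\bullet[\lambda]) = \star[\id_A] = \id$. The parallel composite on the $E$-theory side is $\bullet[S\lambda]\circ\bullet[S\mu] = \bullet([S\mu]\bullet[S\lambda]) = \bullet[S(\mu\bullet\lambda)] = \bullet[\id_{SA}] = \id$, where I use that suspension is a functor on the asymptotic homotopy category. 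Finally Theorem \ref{chcommutative}, applied once with $\mu$ and once with $\lambda$, gives the naturality relations $CH(x\star[\mu]) = CH(x)\bullet[S\mu]$ for $x\in\Ext^{-\frac{1}{2}}(A,B)$ and $CH(z\star[\lambda]) = CH(z)\bullet[S\lambda]$ for $z\in\Ext^{-\frac{1}{2}}(A',B)$.

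The conclusion is then a short diagram chase using that $CH$ is an isomorphism for $A'$. For injectivity, suppose $CH(x)=0$; then $CH(x\star[\mu]) = CH(x)\bullet[S\mu] = 0$, so $x\star[\mu]=0$ because $CH$ is injective on $A'$, and hence $x = (x\star[\mu])\star[\lambda] = 0$. For surjectivity, let $y\in[[SA,B]]$ and choose, by surjectivity of $CH$ on $A'$, an element $z\in\Ext^{-\frac{1}{2}}(A',B)$ with $CH(z) = y\bullet[S\mu]$; then $x := z\star[\lambda]$ satisfies
$$
CH(x) = CH(z)\bullet[S\lambda] = y\bullet[S\mu]\bullet[S\lambda] = y\bullet[\id_{SA}] = y .
$$
Thus $CH : \Ext^{-\frac{1}{2}}(A,B)\to[[SA,B]]$ is bijective, which is the claim.

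The diagram chase itself is formal; the real work lies in the two inputs that go beyond what is quoted above. The first is the unit law $[\varphi]\star[\id_A]=[\varphi]$, which makes $\star$ genuinely functorial: it has to be extracted from the folding construction, using that folding an honest extension reproduces it up to addition of an asymptotically split extension, in the spirit of Lemma 4.4 of \cite{MT2}. The second is the compatibility of the stabilisation isomorphisms with $CH$ together with the fact that shape domination survives stabilisation; these are routine but must be checked so that the reduction to stable $A$ is legitimate. I expect the unit law to be the main technical obstacle, since it is the one place where the internal mechanics of the folding operation, rather than its already-established formal properties, must be invoked.
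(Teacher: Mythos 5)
Your proposal is correct and is essentially the paper's own proof: the paper simply exhibits the diagram built from $CH$, $\mu^*=\cdot\star[\mu]$, $\lambda^*=\cdot\star[\lambda]$, $(S\mu)^*$ and $(S\lambda)^*$, whose commutativity is Theorem \ref{chcommutative}, and performs exactly your two-way chase using the retraction identities coming from $[\mu]\bullet[\lambda]=[\id_A]$ and Theorem \ref{associativity}. The details you flag as needing care (the reduction to stable $A$ and the unit law $[\varphi]\star[\id_A]=[\varphi]$) are left implicit in the paper's one-line proof, so spelling them out is a refinement of, not a departure from, its argument.
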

\begin{proof} This follows from a simple diagram chase in the
  commuting diagram
\begin{equation*}\label{G2}
\begin{xymatrix}{
\Ext^{-\frac{1}{2}}(A,B) \ar[rr]^-{CH} \ar@/^1pc/[dd]^-{\mu^*} & &
[[SA,B]] \ar@/^1pc/[dd]^-{(S\mu)^*} \\
& & \\
\Ext^{-\frac{1}{2}}(A',B) \ar@/^1pc/[uu]^-{\lambda^*} \ar[rr]_-{CH} & &
[[SA',B]] \ar@/^1pc/[uu]^-{(S\lambda)^*} }
 \end{xymatrix}
\end{equation*}

\end{proof}

\begin{cor}\label{nuclear} Let $A$ be a separable $C^*$-algebra which is shape
  dominated by a separable nuclear $C^*$-algebra. It follows that $CH :
  \Ext^{-\frac{1}{2}}(A,B) \to [[SA,B]]$ is an isomorphism
  for every stable $\sigma$-unital $C^*$-algebra $B$.
\end{cor}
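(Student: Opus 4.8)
The plan is to deduce the statement directly from Corollary~\ref{Eth}. By assumption there is a separable nuclear $C^*$-algebra $A'$ which shape dominates $A$, and this is precisely the shape-domination hypothesis appearing in Corollary~\ref{Eth}. The only remaining input needed to apply that corollary is that the Connes--Higson map $CH : \Ext^{-\frac{1}{2}}(A',B) \to [[SA',B]]$ is an isomorphism, so the whole argument reduces to verifying this single fact for the nuclear algebra $A'$. Once that is in place, Corollary~\ref{Eth} applies with no further work and yields that $CH : \Ext^{-\frac{1}{2}}(A,B) \to [[SA,B]]$ is an isomorphism, as claimed.

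To verify that $CH$ is an isomorphism for nuclear $A'$, I would first argue that $\Ext^{-\frac{1}{2}}(A',B)$ is nothing but the classical group of invertible extensions of $A'$ by $B$. Indeed, by the Choi--Effros lifting theorem the Busby invariant of any extension of the nuclear algebra $A'$ admits a completely positive contractive lift, so every extension of $A'$ by $B$ is semisplit and hence invertible in the sense of Kasparov. In particular every such extension is semi-invertible, and a semisplit extension is trivially asymptotically split, so the equivalence relations defining $\Ext^{-\frac{1}{2}}(A',B)$ and the usual extension group agree, and the two groups coincide.

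It then remains to identify this group with $[[SA',B]]$ through $CH$, and here I would invoke the classical theory: for separable nuclear $A'$ one has $E(A',SB) = KK(A',SB) \cong KK^1(A',B)$, the group of invertible extensions of $A'$ by $B$ is canonically isomorphic to $KK^1(A',B)$, and the Connes--Higson construction of \cite{CH} realises precisely this isomorphism after the identification $[[SA',B]] \cong E(A',SB)$ recalled above (cf.\ \cite{DL}). Thus $CH$ is an isomorphism for $A'$. The substantive ingredient of the whole corollary is this nuclear base case; the passage from $A'$ to $A$ is then an immediate application of Corollary~\ref{Eth}, since shape domination by $A'$ is literally its hypothesis, and I expect no difficulty beyond correctly citing the nuclear identification.
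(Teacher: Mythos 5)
Your overall route is exactly the paper's: Corollary~\ref{nuclear} carries no separate proof in the paper precisely because it is the specialisation of Corollary~\ref{Eth} to a nuclear $A'$, the only additional input being the classical fact that $CH$ is an isomorphism for separable nuclear $C^*$-algebras. So your reduction step is correct and matches the paper's intent.

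However, your verification of the nuclear base case contains a false assertion: ``a semisplit extension is trivially asymptotically split.'' Semisplit means the Busby invariant admits a completely positive contractive lift; such an extension need not be asymptotically split --- the Toeplitz extension of $C(\mathbb T)$ by $\mathbb K$ is semisplit but not asymptotically split, since its image under $CH$ (its class in $E$-theory) is nonzero. Were your claim true, every extension of a nuclear algebra would be asymptotically split and $\Ext^{-\frac{1}{2}}(A',B)$ would vanish identically, which it does not. What is true, and what you actually need, is the much weaker statement that a genuinely \emph{split} extension is asymptotically split. This yields a natural homomorphism $j:\Ext(A',B)\to\Ext^{-\frac{1}{2}}(A',B)$ from the classical group, and $j$ is surjective because, by Choi--Effros and Kasparov, every extension of nuclear $A'$ by $B$ is invertible. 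The coincidence of the two equivalence relations (injectivity of $j$) is \emph{not} trivial: it requires knowing that an asymptotically split extension of $A'$ is stably split, which for nuclear $A'$ is essentially the injectivity half of $KK^1(A',B)\cong E^1(A',B)$. The clean repair is to avoid claiming $\Ext=\Ext^{-\frac{1}{2}}$ up front: the composition $CH\circ j:\Ext(A',B)\to[[SA',B]]$ is the standard isomorphism $KK^1(A',B)\cong E(A',SB)$ for nuclear $A'$ (Kasparov plus Connes--Higson, with the identification $[[SA',B]]\cong E(A',SB)$ of \cite{DL}), and surjectivity of $j$ together with bijectivity of $CH\circ j$ formally forces $CH$ on $\Ext^{-\frac{1}{2}}(A',B)$ to be an isomorphism (and, a posteriori, $j$ as well). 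With that correction your argument is complete and agrees with the proof the paper leaves implicit.
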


\begin{cor}\label{homsym} Let $A$ be a separable $C^*$-algebra and $B$
  a stable $\sigma$-unital $C^*$-algebra. Assume that $A$ is homotopy symmetric in the sense of Dadarlat and Loring,
  \cite{DL}. It follows that all extensions of $A$ by $B$ are
  semi-invertible and that $CH :
  \Ext^{-\frac{1}{2}}(A,B) \to [[SA,B]]$ is an isomorphism.
\begin{proof}  The first assertion
  follows from Theorem \ref{THM}. To establish the second we assume
  without loss of generality that $A$ is stable. It follows from \cite{DL} that
  because $A$ is homotopy symmetric the $E$-theory inverse of the
  canonical asymptotic homomorphism $S^2 A \to A$ (arising from the
  Toeplitz extension) has an inverse $A \to S^2A$ giving us a shape
  equivalence between $A$ and $S^2A$. Since $CH : \Ext^{-1/2}(S^2A,B)
  \to [[S^3A,B]]$ is an isomorphism for every stable $\sigma$-unital
  $B$ by \cite{MT3}, it follows from Corollary \ref{Eth} that also $CH :
  \Ext^{-\frac{1}{2}}(A,B) \to [[SA,B]]$ is an isomorphism.
\end{proof}
\end{cor}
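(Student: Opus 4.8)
The plan is to prove the two assertions separately, obtaining the first from Theorem \ref{THM} and the second from Corollary \ref{Eth}.

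For the first assertion I would use that, by the remark immediately following the definition, homotopy symmetry in the sense of Dadarlat and Loring is precisely homotopy symmetry relative to the zero algebra $A' = 0$. Every extension of $0$ by $B$ is trivially semi-invertible, so Theorem \ref{THM} applied with $A' = 0$ yields at once that all extensions of $A$ by $B$ are semi-invertible. No additional work is required here.

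For the second assertion I would first reduce to the case that $A$ is stable: by Lemma 4.3 of \cite{MT5} it suffices to treat $A \otimes \mathbb K$, and since neither $\Ext^{-1/2}(\,\cdot\,,B)$ nor $[[S(\,\cdot\,),B]]$ is affected by this stabilisation, I may assume $A = A \otimes \mathbb K$. The essential input is then the Dadarlat--Loring description of homotopy symmetric $C^*$-algebras, \cite{DL}: because $A$ is homotopy symmetric, the canonical asymptotic homomorphism $S^2 A \to A$ arising from the Toeplitz extension admits an inverse $A \to S^2 A$ in the asymptotic homotopy (i.e. shape) category, so that $A$ and $S^2 A$ are shape equivalent. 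In particular $A$ is shape dominated by $A' = S^2 A$, which is the hypothesis needed to apply Corollary \ref{Eth}.

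Finally I would feed this shape equivalence into Corollary \ref{Eth}. That corollary requires $CH : \Ext^{-1/2}(A',B) \to [[SA',B]]$ to be an isomorphism for $A' = S^2 A$; but $S^2 A = S(SA)$ is a suspension, and for suspensions the Connes--Higson map is known to be an isomorphism by \cite{MT3} (there $SA' = S^3 A$). Hence Corollary \ref{Eth} applies and gives that $CH : \Ext^{-1/2}(A,B) \to [[SA,B]]$ is an isomorphism. I expect the main obstacle to be the middle step, namely converting the purely formal relation $[\id_A] = [\mu] \bullet [\lambda]$ defining homotopy symmetry into an actual shape equivalence $A \simeq S^2 A$ of the form required by Corollary \ref{Eth}; this rests on the Dadarlat--Loring identification of homotopy symmetry with invertibility of the Bott map in unsuspended $E$-theory, together with keeping track of the suspensions so that the algebra $SA' = S^3 A$ is exactly the one for which \cite{MT3} supplies the isomorphism.
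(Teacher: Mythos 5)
Your proposal is correct and follows essentially the same route as the paper: the first assertion via Theorem \ref{THM} with $A'=0$, and the second via the Dadarlat--Loring shape equivalence $A \simeq S^2A$ combined with the isomorphism of \cite{MT3} for suspensions and Corollary \ref{Eth}. The only difference is that you spell out details the paper leaves implicit (that homotopy symmetry means homotopy symmetry relative to $0$, and that extensions of $0$ are trivially semi-invertible), which is a faithful filling-in rather than a different argument.
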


It remains an open question if (\ref{CH??}) is always an
isomorphism.

\end{document}